\theoremstyle{plain}
\newtheorem{tw}{Theorem}[section]
\newtheorem{quest}[tw]{Question}
\newtheorem {lem} [tw]{Lemma}
\newtheorem {prop}[tw] {Proposition}
\newtheorem{cor}[tw]{Corollary}
\newtheorem{thmx}{Theorem}
\newtheorem{corx}[thmx]{Corollary}
\theoremstyle{definition}
\newtheorem {deft}[tw] {Definition}
\newtheorem {rem} [tw]{Remark}
\newcommand{\QG}{\Bbb G}
\newcommand{\hQG}{\widehat{\QG}}
\newcommand{\GG}{\QG}
\newcommand{\whG}{\hQG}
\DeclareMathOperator{\B}{B}
\DeclareMathOperator{\M}{M}
\DeclareMathOperator{\K}{K}
\newcommand{\cst}{\ifmmode\mathrm{C}^*\else{$\mathrm{C}^*$}\fi}
\newcommand{\LL}{\operatorname{L}}
\newcommand{\I}{\mathds{1}}
\newcommand{\ov}{\overline}
\newcommand{\vp}{\varphi}
\newcommand{\oon}{\operatorname}
\newcommand{\bc} {\Bbb C}
\newcommand{\bn}{\Bbb N}
\newcommand{\br}{\Bbb R}
\newcommand{\RR}{\mathbb{R}}
\newcommand{\CC}{\mathbb{C}}
\newcommand{\NN}{\mathbb{N}}
\newcommand{\mrm}{\mathrm}
\newcommand{\lec}{\preceq}
\newcommand{\eps}{\varepsilon}
\numberwithin{equation}{section}
\DeclareMathOperator{\C}{C}
\DeclareMathOperator{\CB}{CB}
\DeclareMathOperator{\A}{A}
\newcommand {\Tr} {{\textrm{Tr}}}
\newcommand {\id} {{\mathrm{id}}}
\newcommand {\Rep} {{\textup{Rep}}}
\newcommand {\Ker} {{\textup{Ker}}}
\newcommand {\Pol} {{\textup{Pol}}}
\newcommand {\Irr} {{\textup{Irr}}}
\newcommand{\mc}{\mathcal}
\newcommand{\msf}{\mathsf}
\newcommand{\mf}{\mathfrak}
\newcommand{\Hil}{\mathsf{H}}
\newcommand{\Ind}{\mathcal{I}}
\newcommand{\Com}{\Delta}
\newcommand{\ww}{\mathrm{W}}
\newcommand{\Ww}{\mathds{W}}
\newcommand{\wW}{\text{\reflectbox{$\Ww$}}\:\!}
\newcommand{\la}{\langle}
\newcommand{\ra}{\rangle}
\DeclareMathOperator{\N}{N}
\newcommand{\ot}{\otimes}
\newcommand{\wot}{\mathbin{\bar{\otimes}}}
\newcommand{\wt}{\widetilde}
\numberwithin{equation}{section}
\begin{document}


\author{Jacek Krajczok}
\address{
School of Mathematics and Statistics, University of Glasgow, University Place, Glasgow G12 8QQ, United Kingdom}
\curraddr{Vrije Universiteit Brussel, Pleinlaan 2, 1050 Brussels, Belgium}
\email{jacek.krajczok@vub.be}

\author{Adam Skalski}
\address{Institute of Mathematics of the Polish Academy of Sciences, ul. \'Sniadeckich 8, 00-656 Warszawa, Poland}
\email{a.skalski@impan.pl}

\title{\bf Separation properties for positive-definite functions on locally compact quantum groups and for associated von Neumann algebras}

\begin{abstract}
Using the Godement mean on the  Fourier-Stieltjes algebra	of a locally compact quantum group we obtain strong separation results for quantum positive-definite functions associated to a subclass of representations, strengthening for example the known relationship between amenability of a discrete quantum group and existence of a net of finitely supported quantum positive-definite functions converging pointwise to $\I$. We apply these results to show that von Neumann algebras of unimodular discrete quantum groups enjoy a strong form of non-$w^*$-CPAP, which we call the matrix $\eps$-separation property.
\end{abstract}

\subjclass[2020]{Primary 46L65; Secondary 43A35,  46L89}
\keywords{Locally compact quantum group; positive-definite function; approximation property; von Neumann algebra}

\maketitle

\section{Introduction}

The connection between properties of positive-definite functions on a locally compact group $G$, geometric  properties of $G$ and approximation properties of operator algebras associated with $G$ are well-known and form one of the key aspects of analytic geometric group theory.  The situation is most satisfactory for discrete groups, where for example injectivity (in other words, $w^*$-CPAP) of the group von Neumann algebra is equivalent to the existence of a net of finitely supported normalised positive-definite functions converging pointwise to $\I$, and further to the amenability of the group in question (see for example \cite{bo}). A similar correspondence holds for the Haagerup property, with the finitely supported positive-definite functions replaced by these which vanish at infinity (\cite{book}, \cite{choda}).

Analogues of these statements remain true for locally compact quantum groups in the sense of \cite{KustermansVaes}, with the most satisfactory equivalences available for (unimodular) discrete quantum groups (see \cite{Brannan} and references therein). Once again a central role is played by nets of `positive-definite functions' on a locally compact quantum group $\GG$ which have good decay properties and in the limit approximate the constant function $\I$. The corresponding operator algebraic picture  concerns studying normal unital completely positive (UCP) maps on a von Neumann algebra which are `small' in a certain sense, and yet in the limit approximate the identity operator.

The main question studied in this paper is a possibility of weakening the limit property above. Instead of approximating the identity operator (or a constant function $\I$) we want to ask what happens if we can only achieve being `uniformly separated from $0$ in the limit'. Motivated by the classical results of \cite{Derighetti} (see also \cite{Bozejko}), which used the Godement mean of \cite{Godement} to characterise properties of some subclasses of positive-definite functions, we obtain the first of the main results of the paper, which shows that the existence of a net of (`compactly supported') quantum positive-definite functions on $\GG$ which in the limit is `$\eps$-away from $0$' already implies the amenability of $\GG$.  We also obtain a corresponding result for the Haagerup property of \cite{QHap}. For simplicity we formulate these below only for discrete quantum groups.

\begin{thmx}\label{thmA}
	Let $\GG$ be a discrete quantum group. If $\GG$ is not amenable (respectively, not Haagerup), then there is no $\eps>0$ and no net $(f_i)_{i\in I}$ of normalised finitely supported (respectively, vanshing at infinity) positive definite functions on $\GG$ such that
\[
\forall_{\alpha \in \Irr(\whG)} \exists_{i_0\in I}\forall_{i\ge i_0}\quad
 p_\alpha f_i  \ge \eps p_\alpha.
\]
\end{thmx}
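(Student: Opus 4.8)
The plan is to prove the contrapositive of each of the two statements at once. So suppose that for some $\eps>0$ there is a net $(f_i)_{i\in I}$ of normalised finitely supported (respectively, vanishing at infinity) positive-definite functions on $\QG$ satisfying the displayed condition; I will deduce that $\QG$ is amenable (respectively, has the Haagerup property). The mechanism to exploit is an asymmetry in the hypothesis: the decay of each \emph{individual} $f_i$ is a feature that survives a weak$^*$ limit, whereas the pointwise lower bound $p_\alpha f_i\ge\eps p_\alpha$ holds only eventually and so is visible only \emph{in} the limit. The Godement mean is precisely the device that converts that limiting lower bound into an honest invariant vector.

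First I would realise each $f_i$ as a state $\mu_i$ on $\CzU(\whG)$ and let $U_i$ be the associated cyclic corepresentation of $\QG$, with integrated $*$-representation $\pi_{U_i}$, so that $f_i$ is a coefficient of $U_i$; normalisation makes $(f_i)$ norm-bounded. I would then invoke the standard facts that a finitely supported positive-definite function is a coefficient of the regular corepresentation of $\QG$, while a positive-definite function vanishing at infinity is a coefficient of a corepresentation all of whose coefficients lie in $\mathrm{c}_0(\QG)$ --- the latter because, once all coefficients against a fixed cyclic vector lie in $\mathrm{c}_0(\QG)$, density and uniform boundedness force all coefficients to lie there. Thus in the first case $U_i$ is weakly contained in the regular corepresentation, and in the second $U_i$ has only $\mathrm{c}_0$-coefficients. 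Passing to $U:=\bigoplus_{i\in I}U_i$ preserves each of these: in the second case a Cauchy--Schwarz estimate on the tail shows that every coefficient of $U$ is a uniform limit of finite sums of $\mathrm{c}_0(\QG)$-functions, hence again lies in $\mathrm{c}_0(\QG)$. Each $f_i$, viewed as a state on $\CzU(\whG)$, factors through the $\mathrm{C}^*$-algebra $\pi_U(\CzU(\whG))$ generated by $U$, so by Banach--Alaoglu I may pass to a subnet with $f_i\to f$ weak$^*$, where $f$ is again a normalised positive-definite function factoring through $\pi_U(\CzU(\whG))$; in particular the GNS corepresentation $U_f$ is weakly contained in $U$.

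Next I would transport the lower bound to $f$. For each $\alpha\in\Irr(\whG)$ the map $g\mapsto p_\alpha\widehat g$ from the Fourier--Stieltjes algebra of $\QG$ to the finite-dimensional block $p_\alpha\Linf(\QG)$ is weak$^*$-continuous --- it is implemented by the elements of $\mathsf{L}^1(\QG)$ sitting inside $\CzU(\whG)$ via the universal Fourier transform --- so from $p_\alpha f_i\ge\eps p_\alpha$ eventually and closedness of the positive cone of the block we obtain $p_\alpha f\ge\eps p_\alpha$; since $\alpha$ was arbitrary, $\widehat f\ge\eps\,\I$ in $\Linf(\QG)$. Now I would bring in the Godement mean $\mathsf{M}$ on the Fourier--Stieltjes algebra of $\QG$. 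On one hand, positivity of $\mathsf{M}$ applied to $\widehat f-\eps\,\I\ge 0$, together with $\mathsf{M}(\I)=1$, gives $\mathsf{M}(f)\ge\eps>0$. On the other hand $\mathsf{M}(f)=\|P\xi_f\|^2$, where $\xi_f$ is the cyclic GNS vector of $f$ and $P$ the orthogonal projection onto the invariant vectors of $U_f$; hence $U_f$ has a nonzero invariant vector, i.e.\ the trivial corepresentation is a subcorepresentation of $U_f$, and so is weakly contained in $U$. In the finitely supported case this says the trivial corepresentation is weakly contained in the regular corepresentation, i.e.\ $\QG$ is amenable; in the vanishing-at-infinity case it is weakly contained in $U$, a corepresentation all of whose coefficients vanish at infinity, i.e.\ $\QG$ has the Haagerup property. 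Either way the standing assumption is contradicted.

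I expect the real content --- presumably established earlier in the paper --- to be in the two facts used as black boxes: the basic properties of the Godement mean $\mathsf{M}$, namely that it is positive for the order of $\Linf(\QG)$, that $\mathsf{M}(\I)=1$, and that on a positive-definite function $f$ it computes $\|P\xi_f\|^2$; and the correspondence between the decay of a positive-definite function and weak containment in the regular corepresentation, respectively in a corepresentation with $\mathrm{c}_0$-coefficients. A secondary point requiring care is the weak$^*$-continuity of the block projections $g\mapsto p_\alpha\widehat g$ together with the identification of the weak$^*$-limit $f$ as being supported by $U$ rather than merely an abstract state on $\CzU(\whG)$.
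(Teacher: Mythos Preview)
Your proof is correct and follows essentially the same route as the paper: pass to a weak$^*$-limit state, show its GNS representation stays in the relevant class (weakly contained in the regular representation, respectively mixing), transport the eventual lower bound $p_\alpha f_i\ge\eps p_\alpha$ to the limit to get $b\ge\eps\I$, and then apply the Godement mean together with its identification as the invariant-vector projection. The paper packages the argument as a general statement for arbitrary locally compact quantum groups and arbitrary closed-under-sums classes $S\subset\Rep(\QG)$ (Theorem~\ref{thm:Godementnv}), citing \cite[Proposition~3.4.9]{DixmierC} for the weak containment of the limiting GNS representation where you instead build the direct sum $U=\bigoplus_i U_i$ by hand; the black boxes you flag (positivity and the invariant-vector formula for $M$, and the membership of $c_c$-supported resp.\ $c_0$ positive-definite functions in $B_\lambda$ resp.\ $B_0$) are exactly the inputs the paper records in Propositions~\ref{Godementmean}--\ref{prop:Godement} and the references to \cite{DSV,QHap}.
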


In the next step we turn to the analogous operator algebraic question. Recall that a von Neumann algebra $\M$ is said to have the \emph{$w^*$-CPAP}  (equivalently, by \cite{Connes}, is injective) if there exists a net $(\Phi_\lambda)_{\lambda\in \Lambda}$ of normal, finite rank, UCP maps on $\M$ which approximate  the identity map in the pointwise-ultraweak topology.

We say that a von Neumann algebra $\M$ has the \emph{matrix $\eps$-separation property} (for a fixed  $\eps \in (0,1)$)  if for every net $(\Phi_i)_{i \in I}$ of normal, finite rank, UCP maps on $\M$ there is a Hilbert space $\Hil$ and $x\in \M\bar{\otimes }\B(\Hil),\omega\in (\M\bar{\otimes} \B(\Hil))_*$ with $\|x\|=\|\omega\|=1$ such that $\limsup_{i \in I} |\la x-(\Phi_\lambda \ot \id)(x),\omega\ra|> \eps$. It is thus easy to see that if $\M$ has the matrix $\eps$-separation property it cannot be injective. The converse statement -- for a fixed $\eps$ -- appears however non-obvious (an easy `diagonal' argument shows that every non-injective von Neumann algebra must have the matrix $\eps$-separation property for \emph{some} $\eps \in (0,1)$). Appealing to the \emph{Haagerup trick}, which allows us to average arbitrary UCP maps on a von Neumann algebra into quantum Herz-Schur multipliers, and exploiting the separation results for quantum positive-definite functions mentioned above, we are led to the following theorem.

\begin{thmx}\label{thmB}
Let $\GG$ be a compact quantum group of Kac type, $\eps \in (0,1)$. Then $\LL^\infty(\GG)$ has the matrix $\eps$-separation property if and only if $\LL^\infty(\GG)$ is non-injective.
\end{thmx}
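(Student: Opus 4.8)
I would prove the two implications separately. The ``only if'' direction is the easy one already recorded before the statement: if $\M:=\LL^\infty(\GG)$ is injective it has the $w^*$-CPAP, so there is a net $(\Phi_\lambda)$ of normal finite rank UCP maps on $\M$ with $\Phi_\lambda\to\id$ pointwise-ultraweakly; then $\Phi_\lambda\ot\id\to\id$ pointwise-ultraweakly on $\M\bar\otimes\B(\Hil)$ for every Hilbert space $\Hil$, so for this net $\limsup_\lambda|\la x-(\Phi_\lambda\ot\id)(x),\omega\ra|=0$ for all $x,\omega$ and $\M$ fails the matrix $\eps$-separation property.

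For the converse I would argue by contraposition. Assume $\M$ fails the matrix $\eps$-separation property, i.e.\ there is a net $(\Phi_i)_{i\in I}$ of normal finite rank UCP maps on $\M$ with $\limsup_{i}|\la x-(\Phi_i\ot\id)(x),\omega\ra|\le\eps$ for all Hilbert spaces $\Hil$ and all $x\in\M\bar\otimes\B(\Hil)$, $\omega\in(\M\bar\otimes\B(\Hil))_*$ of norm one. The plan is to extract from $(\Phi_i)$ a net of finitely supported normalised positive-definite functions on $\whG$ that is bounded below on each block, so that Theorem~\ref{thmA} forces $\whG$ to be amenable; amenability of the dual then gives injectivity of $\M=\LL^\infty(\GG)$ by the classical argument (see e.g.\ \cite{Brannan}). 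The first step is to apply the Haagerup averaging trick to each $\Phi_i$: since $\GG$ is of Kac type this produces a normal finite rank UCP \emph{multiplier} $\Theta_{f_i}$ on $\M$ whose symbol $f_i\in\ell^\infty(\whG)$ is normalised ($p_{\mathbf 1}f_i=1$, as $\Phi_i(1)=1$) and positive-definite (as $\Phi_i$ is CP), and is moreover square-summable, $f_i\in\Ltwo(\whG)$, because $\Phi_i$ has finite rank. The feature I shall exploit is that, up to a transpose, the block $p_\alpha f_i$ is the ``diagonal'' Haar-state matrix $\bigl[\sum_j h\bigl(\Phi_i(u^\alpha_{pj})\,(u^\alpha_{qj})^{*}\bigr)\bigr]_{p,q}$, a finite-dimensional quantity that depends only on the restriction of $\Phi_i$ to the isotypic block $\lin\{u^\alpha_{pq}\}_{p,q}$; so the averaging over $\GG$ never leaves these finite-dimensional blocks.

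Next I would read off the hypothesis blockwise. Fix $\alpha\in\Irr(\GG)$, put $n_\alpha=\dim\Hil_\alpha$, let $U^\alpha=\sum_{p,q}u^\alpha_{pq}\ot e_{pq}\in\M\bar\otimes\B(\Hil_\alpha)$ (a unitary, so $\|U^\alpha\|=1$), and for a unit vector $\zeta\in\Hil_\alpha$ let $\omega_{\alpha,\zeta}$ be the norm-one functional $z\mapsto(h\ot\Tr)\bigl((1\ot\zeta\zeta^{*})\,z\,(U^\alpha)^{*}\bigr)$. Since $h$ annihilates every isotypic component except the $\alpha$-th, and since the orthogonality relations of the Kac algebra $\GG$ carry no weights, one computes $\la U^\alpha-(\Phi_i\ot\id)(U^\alpha),\omega_{\alpha,\zeta}\ra=1-\la p_\alpha f_i\,\zeta,\zeta\ra$ (transpose absorbed into the identification above). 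The hypothesis now yields $\limsup_i|1-\la p_\alpha f_i\,\zeta,\zeta\ra|\le\eps$, hence $\liminf_i\operatorname{Re}\la p_\alpha f_i\,\zeta,\zeta\ra\ge1-\eps$, for every unit $\zeta\in\Hil_\alpha$. Because positive-definiteness gives $\|p_\alpha f_i\|\le p_{\mathbf 1}f_i=1$, the functions $\zeta\mapsto\operatorname{Re}\la p_\alpha f_i\,\zeta,\zeta\ra$ are equi-Lipschitz on the compact unit sphere of $\Hil_\alpha$, so covering the sphere by finitely many small balls and using that $I$ is directed upgrades the pointwise-in-$\zeta$ bound to: for every $\eta>0$ and every $\alpha$ there is $i_0\in I$ with $\operatorname{Re}(p_\alpha f_i)\ge(1-\eps-\eta)p_\alpha$ for all $i\ge i_0$. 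Replacing $f_i$ by $\tfrac12(f_i+f_i^{*})$---again normalised, positive-definite and square-summable, since the adjoint of a normal CP map with respect to the Haar trace of the finite algebra $\M$ is again CP---makes each $p_\alpha f_i$ self-adjoint and equal to the former $\operatorname{Re}(p_\alpha f_i)$, so $p_\alpha f_i\ge(1-\eps-\eta)p_\alpha$ eventually. Fix $\eta>0$ with $\eps_0:=1-\eps-\eta>0$, possible precisely because $\eps<1$.

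Finally, each square-summable $f_i$ is a coefficient of the left regular representation of $\whG$ via a unit vector $\xi_i$; approximating $\xi_i$ by a finitely supported unit vector and invoking local finiteness of the fusion rules of $\GG$, one gets finitely supported, normalised, self-adjoint, positive-definite functions $\wt f_i$ on $\whG$ with $p_\alpha\wt f_i\to p_\alpha f_i$ for each $\alpha$, and after re-indexing (by $I\times\NN$, say) a net $(\wt f_j)_j$ of finitely supported normalised positive-definite functions on $\whG$ with $p_\alpha\wt f_j\ge\tfrac12\eps_0\,p_\alpha$ eventually, for each $\alpha$. By Theorem~\ref{thmA} the discrete quantum group $\whG$ is amenable, so $\M$ is injective, completing the argument. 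The main obstacle I anticipate is exactly the passage from the ``fixed-$\omega$'' hypothesis to the operator inequalities $p_\alpha f_i\ge\eps_0\,p_\alpha$: a priori the Haagerup averaging runs over all of $\GG$, and interchanging such an average with a $\limsup$ along a net is not permissible in general. It is rescued by the structural fact recorded above---that the averaged symbol on each block $\B(\Hil_\alpha)$ is a finite-dimensional datum read directly off $\Phi_i$ through the Haar state---so that the whole estimate lives inside the matrix algebras $\B(\Hil_\alpha)$, where compactness of the unit sphere together with directedness of $I$ suffice; a secondary technical point is that the Haagerup symbols are only square-summable and must be replaced by finitely supported ones before Theorem~\ref{thmA} applies.
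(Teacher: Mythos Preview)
Your argument is correct and close in spirit to the paper's, but differs in one interesting place. In the paper's proof of Theorem~\ref{thm:matrix}, the hypothesis is tested only against the \emph{single} functional $\omega=(h\otimes\operatorname{tr})(U^{\alpha*}\,\cdot\,)$, which extracts the normalized trace $\tfrac{1}{\dim\alpha}\sum_k (a)^{\alpha}_{kk}$ of the symbol on each block; to turn this scalar bound into an operator inequality the paper then performs a \emph{second} Haagerup averaging $\hat\Phi=\Delta^{\sharp}(\id\otimes\wt\Phi)\Delta$, which replaces the symbol by its blockwise normalized trace and hence makes it central, after which symmetrizing via $R_{\GG}$ gives a real scalar on each block with no loss in $\eps$. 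You instead test against the whole family $\omega_{\alpha,\zeta}$, which reads off each diagonal value $\langle\zeta,(f_i)^{\alpha}\zeta\rangle$ directly (your identity $\langle(\Phi_i\ot\id)(U^{\alpha}),\omega_{\alpha,\zeta}\rangle=\langle\zeta,p_\alpha f_i\,\zeta\rangle$ is correct, as the Haar-state matrix $\sum_j h(\Phi_i(u^{\alpha}_{pj})(u^{\alpha}_{qj})^{*})$ is indeed the $\alpha$-block of the averaged symbol), and then use compactness of the finite-dimensional unit sphere together with the equi-Lipschitz bound coming from $\|p_\alpha f_i\|\le 1$ to pass to a uniform operator inequality, at the cost of an arbitrarily small $\eta$. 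Both routes work; the paper's double-averaging is slightly slicker (one test functional per $\alpha$, no $\eta$-loss), while yours makes transparent that the role of the ``matrix'' in the matrix $\eps$-separation property is precisely to probe rank-one compressions of the symbol.

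Two small comments on your justifications. First, the claim that $\tfrac12(f_i+f_i^{*})$ is again positive-definite is correct; your reasoning via ``the $L^2$-adjoint of a CP map on a tracial algebra is CP'' does go through (positivity of $\Phi^{\dagger}$ follows from $\tau(\Phi^{\dagger}(a)b)=\tau(a\Phi(b))\ge 0$ for $a,b\ge 0$, and this amplifies), but the more standard route in this context is to observe that $f_i^{*}$ corresponds to $\mu_i\circ R_{\GG}$, which is positive since $R_{\GG}$ is a $*$-anti-automorphism. Second, the correct reason $f_i$ is a coefficient of the regular representation of $\whG$ is not merely that it is ``square-summable'' but that the Haagerup averaging of a \emph{finite-rank} normal UCP map lands in $\A(\whG)$ (see the reference to \cite[Section~6.3.2]{Brannan} in the paper); this is what places the associated state in $\LL^1(\GG)$ and hence realises $f_i$ as $(\id\otimes\omega_{\xi_i})(\ww^{\whG*})$.
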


The above theorem also has a Haagerup property counterpart. It can be strengthened for these compact quantum groups which admit a uniform bound on the dimension of irreducible representations; in particular we obtain the following result (dropping the word `matrix' from the $\eps$-separation property amounts to setting $\Hil = \bc$ in the definition above).

\begin{corx}\label{corC}
Let $\Gamma$ be a discrete group, $\eps \in (0,1)$. Then $\operatorname{vN}(\Gamma)$ has the $\eps$-separation property if and only if $\operatorname{vN}(\Gamma)$ is non-injective.	
\end{corx}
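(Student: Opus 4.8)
This statement is the case $\GG=\hat\Gamma$ of a sharpening of Theorem~\ref{thmB}. The compact quantum group dual to a discrete group $\Gamma$ satisfies $\LL^\infty(\GG)=\operatorname{vN}(\Gamma)$, is of Kac type (its Haar state is the canonical trace $\tau$ on $\operatorname{vN}(\Gamma)$), and has all its irreducible representations one-dimensional, parametrised by the elements of $\Gamma$ via $g\mapsto\lambda_g$. The plan is to rerun the argument behind Theorem~\ref{thmB} in this situation, observing that since every $n_\alpha$ equals $1$ the auxiliary Hilbert space $\Hil$ in the matrix $\eps$-separation property may be taken equal to $\bc$, which is precisely what it means to drop the word ``matrix''. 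I would treat the two implications separately. The implication ``$\eps$-separation $\Rightarrow$ non-injective'' is soft: were $\operatorname{vN}(\Gamma)$ injective it would, by \cite{Connes}, admit a net $(\Phi_\lambda)$ of normal finite-rank UCP maps converging to $\id$ in the pointwise-ultraweak topology, and applying the $\eps$-separation property to this net would produce norm-one $x\in\operatorname{vN}(\Gamma)$ and $\omega\in\operatorname{vN}(\Gamma)_*$ with $\limsup_\lambda|\la x-\Phi_\lambda(x),\omega\ra|>\eps>0$, contradicting $\la x-\Phi_\lambda(x),\omega\ra\to 0$.

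For the converse, assume $\operatorname{vN}(\Gamma)$ is non-injective and suppose, towards a contradiction, that it fails the $\eps$-separation property: there is a net $(\Phi_i)_{i\in I}$ of normal finite-rank UCP maps with $\limsup_i|\la x-\Phi_i(x),\omega\ra|\le\eps$ for all norm-one $x\in\operatorname{vN}(\Gamma)$ and $\omega\in\operatorname{vN}(\Gamma)_*$. I would apply the Haagerup trick to each $\Phi_i$: averaging over the group replaces $\Phi_i$ by the Herz--Schur multiplier attached to the normalised positive-definite function $\varphi_i(g)=\tau(\lambda_{g^{-1}}\Phi_i(\lambda_g))$, and finite-rankness of $\Phi_i$ forces $\varphi_i\in\ell^2(\Gamma)$, so $\varphi_i(g)=\la\lambda_g\xi_i,\xi_i\ra$ $(g\in\Gamma)$ for a unit vector $\xi_i\in\ell^2(\Gamma)$; truncating $\xi_i$ to a finite subset of $\Gamma$ and renormalising yields a finitely supported normalised positive-definite function $f_i$ with $\sup_g|\varphi_i(g)-f_i(g)|$ as small as desired. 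Now testing the standing hypothesis with $x=\lambda_g$ and $\omega=\tau(\lambda_{g^{-1}}\,\cdot\,)$, both of norm one, gives $\la x-\Phi_i(x),\omega\ra=1-\varphi_i(g)$, whence $\limsup_i|1-\varphi_i(g)|\le\eps$ for every $g\in\Gamma$; after refining the net by the truncation accuracy the same bound holds for $f_i$. Replacing $f_i$ by its real part $g\mapsto\tfrac12(f_i(g)+f_i(g^{-1}))$---still finitely supported, normalised and positive-definite---and using $\eps<1$, we get with $\eps':=(1-\eps)/2>0$ that for every $g\in\Gamma$ one has $f_i(g)\ge\eps'$ for all sufficiently large indices, i.e.\ $p_g f_i\ge\eps' p_g$ in $\ell^\infty(\Gamma)=\LL^\infty(\GG)$, $p_g$ being the minimal projection at $g$. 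By Theorem~\ref{thmA} this forces $\Gamma$ to be amenable, hence $\operatorname{vN}(\Gamma)$ injective (\cite{bo})---a contradiction.

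The step I expect to be the genuine obstacle is the reduction from the matrix $\eps$-separation property delivered by Theorem~\ref{thmB} to the plain one claimed here. In the proof of Theorem~\ref{thmB} the amplification $\M\bar{\otimes}\B(\Hil)$ is not cosmetic: to detect the matrix inequality $p_\alpha f_i\ge\eps p_\alpha$ inside $M_{n_\alpha}$ by means of a single test pair $(x,\omega)$ one must let $x$ carry an $M_{n_\alpha}$-component, so $\Hil$ has to be at least $n_\alpha$-dimensional, and one may keep its dimension bounded across $\alpha$ only under a uniform bound $\sup_\alpha n_\alpha<\infty$. For $\GG=\hat\Gamma$ this supremum is $1$, so $\Hil=\bc$ suffices and no matrices intervene---this is the sole point where the hypothesis that $\Gamma$ is an honest group, and not a genuine quantum group, is doing work. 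A lesser nuisance, already visible above, is that the Haagerup average of a finite-rank map is in general only $\ell^2$ rather than literally finitely supported, so the truncation and the matching refinement of the index net must be set up so as not to degrade the $\eps$-estimates; this is routine but should not be skipped.
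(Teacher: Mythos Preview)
Your argument is correct and follows essentially the same route as the paper. In the paper, Corollary~\ref{corC} is deduced from Theorem~\ref{thm2} (not from Theorem~\ref{thmB} as you frame it): Theorem~\ref{thm2} already gives the \emph{plain} $\eps$-separation property for all $\eps<1/\mathbf{N}_{\GG}$, and for $\GG=\widehat{\Gamma}$ one has $\mathbf{N}_{\GG}=1$, so every $\eps\in(0,1)$ is allowed. Your proof is exactly the specialisation of the combined proofs of Theorem~\ref{thm2} and Proposition~\ref{prop5} to the cocommutative case, where the computations collapse: the test pair $x=\lambda_g$, $\omega=\tau(\lambda_{g^{-1}}\,\cdot\,)$ gives directly $\la x-\Phi_i(x),\omega\ra=1-\varphi_i(g)$, the Haagerup average is the classical Herz--Schur multiplier, and the symmetrisation $\tfrac12(f_i(g)+f_i(g^{-1}))$ is the real part. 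Your closing remarks about why $\Hil=\bc$ suffices are precisely the content of the bound $\mathbf{N}_{\GG}=1$ in Theorem~\ref{thm2}. The only cosmetic point: you could take $\eps'=1-\eps$ rather than $(1-\eps)/2$, since the truncation error can be absorbed by passing to any $\eps''\in(\eps,1)$ before applying Theorem~\ref{thmA}.
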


This naturally leads to a question whether the same equivalence persists for general von Neumann algebras; we formulate it in the end of the paper, providing also certain equivalent reformulations.

The specific plan of the paper is as follows: in the second section we first recall certain preliminary facts concerning locally compact quantum groups and establish a simple technical lemma, and then we pass to studying generalized quantum Fourier-Stieltjes algebras in the spirit of \cite{Eymard} or \cite{KaniuthLau}. Also here the Godement mean makes an appearance and we show in Proposition \ref{prop:Godement} how its behaviour relates to properties of the locally compact quantum group in question. Section \ref{sec:separationpdf} is devoted to studying separation properties for quantum positive-definite functions associated to specific classes of representations. We first establish a general approximation result for such functions in Proposition \ref{prop:approxpdf}, and then use it to prove the main result of this Section, providing a sufficient condition for non-vanishing of the Godement mean on a specific generalized Fourier-Stieltjes algebra, Theorem \ref{thm:Godementnv}. Together with the facts shown in Section \ref{sec:prelim} this implies several corollaries, in particular Theorem \ref{thmA} above follows from Corollaries \ref{cor1}, \ref{cor2} and \ref{cor3}. Finally in Section \ref{sec:sepOA} we study separation properties for von Neumann algebras of discrete quantum groups. We begin by looking at  separation conditions expressed in terms of quantum Herz-Schur multipliers (Proposition \ref{prop5}), use it to motivate the definition of (matrix) $\eps$-separation property (Definition \ref{def1}) and discuss the easy consequences. We then consider specifically unimodular discrete quantum groups, first assuming that we have a bound on the size of irreducibles  in Theorem \ref{thm2} and then dropping this assumption in Theorems \ref{thm:matrix} and \ref{thm:Haagerup}. In particular here we prove Theorem \ref{thmB} (and Corollary \ref{corC} follows from Theorem \ref{thm2}).

\section{Preliminaries and quantum Fourier-Stieltjes algebras} \label{sec:prelim}

We will work in the setting of locally compact quantum groups, introduced by Kustermans and Vaes \cite{KustermansVaes} (see also \cite{VanDaele}). By defnition, any locally compact quantum group $\QG$ comes together with a von Neumann algebra $\LL^{\infty}(\QG)$, \emph{comultiplication} $\Delta\colon \LL^{\infty}(\QG)\rightarrow \LL^{\infty}(\QG) \wot \LL^{\infty}(\QG) $ which is a normal, unital $*$-homomorphism and two n.s.f.\ (i.e.\ normal semifinite faithful) weights $\vp,\psi$ which are called \emph{Haar integrals} and satisfy left (resp.~right) invariance conditions. The predual of $\LL^{\infty}(\QG)$ is denoted by $\LL^1(\QG)$ and the GNS Hilbert space of $\vp$ is $\LL^2(\QG)$ -- it can be also identified with the GNS Hilbert space of $\psi$. The corresponding GNS map is denoted $\Lambda_\vp\colon \mf{N}_\vp\rightarrow \LL^2(\QG)$, where $\mf{N}_\vp:= \{x \in \LL^\infty (\QG)\,|\, \vp(x^*x) < \infty \}$. With any locally compact quantum group one can associate its dual $\whG$ and by construction $\LL^2(\whG)$ is equal to $\LL^2(\QG)$. The assignment $\QG\mapsto \whG$ extends the classical Pontryagin duality of locally compact abelian groups and is itself a duality in the sense that the dual of $\whG$ is canonically isomorphic with $\QG$. We will follow the convention which favours left objects over the right ones. 

An important result in the theory states the existence of \emph{Kac-Takesaki operator} $\ww\in \LL^{\infty}(\QG)\wot \LL^{\infty}(\whG)$. It is a unitary operator which implements comultiplication via $\Delta(x)=\ww^* (\I\otimes x)\ww$ for $x\in \LL^{\infty}(\QG)$. One can construct a weak$^*$-dense $\mrm{C}^*$-subalgebra of $\LL^{\infty}(\QG)$ via $\mrm{C}_0(\QG)=\ov{\{(\id\otimes\omega)\ww\,|\,\omega\in \LL^1(\whG)\}}$. Then $\ww\in \M(\mrm{C}_0(\QG)\otimes \mrm{C}_0(\whG))$ and comultiplication restricts to a non-degenerate $*$-homomorphism $\mrm{C}_0(\QG)\rightarrow \M(\mrm{C}_0(\QG)\otimes \mrm{C}_0(\QG))$. We write $\C_b(\QG)$ for $\M(\C_0(\QG))$. The $\mrm{C}^*$-algebra introduced above has a universal counterpart, $\mrm{C}_0^u(\QG)$ (see \cite{Johan}). It is equipped with its own comultiplication and the \emph{reducing map} $\lambda_{\QG}\colon \mrm{C}_0^u(\QG)\rightarrow\mrm{C}_0(\QG)$, a surjective $*$-homomorphism commuting with respective comultiplications. Kac-Takesaki operator admits several lifts, in particular the right-universal version $\wW\in \M(\mrm{C}_0(\QG)\otimes \mrm{C}_0^u(\whG))$ with similar properties to $\ww$. Both operators are related via the formula  $(\id\otimes \lambda_{\whG})\wW=\ww$. One says that $\QG$ is \emph{compact} if $\mrm{C}_0(\QG)$ is unital (equivalently, the Haar integrals are states -- and thus coincide); we then simply write $\mrm{C}(\QG)$ instead of $\mrm{C}_0(\QG)$.  Further one says that $\QG$ is \emph{discrete} if $\whG$ is compact. In this case $\whG$ is said to be \emph{Kac} if its Haar integral is tracial; equivalently, $\QG$ is \emph{unimodular}, i.e.\ its left and right Haar integrals coincide. 

A \emph{(unitary) representation} of $\QG$ on a Hilbert space $\Hil$ is a unitary element $U\in \M(\mrm{C}_0(\QG)\otimes \mc{K}(\Hil))$ which satisfies $(\Delta\otimes \id)(U)=U_{13} U_{23}$, where we use the standard leg-numbering notation. \emph{Coefficients} of $U$ are then elements of the form $(\id\otimes \omega_{\xi,\eta})(U^*)\in \mrm{C}_b(\QG)\,(\xi,\eta\in \Hil)$ -- we adopt this definition because of our convention concerning the Fourier algebra. There is a one-to-one correspondence between representations $U$ and non-degenerate $*$-representations $\phi_U\colon \mrm{C}_0^u(\whG)\rightarrow \B(\Hil)$, given by the formula $U=(\id\otimes \phi_U)(\wW)$. We shall write $\Rep(\QG)$ for the family of unitary equivalence classes of unitary representations of a locally compact quantum group $\QG$. We will follow the conventional abuse of notation and identify representation with its unitary equivalence class.
Given two representations $U,V$ we say that $U$ is weakly contained in $V$ (written $U\lec V$) if $\phi_U$ is weakly contained in $\phi_V$ (written as $\phi_U\lec \phi_V$), i.e.\ when $\ker(\phi_V)\subset \ker (\phi_U)$ (see also \cite[Theorem 1.2]{Fell} for other equivalent conditions). We will always assume that $\Rep(\QG)$ is equipped with the Fell topology. Note that if $\GG$ is discrete we have $\mrm{c}_0(\GG)= \bigoplus_{\alpha \in \Irr(\whG)} \M_{\dim(\alpha)}$, where $\Irr(\whG)$ denotes the set of equivalence classes of \emph{irreducible} unitary representations of $\hQG$. On the other hand if $\QG$ is compact, the coefficients of all irreducible unitary representations of $\QG$ span a canonical dense Hopf *-subalgebra of $\mrm{C}(\QG)$, denoted $\Pol(\QG)$.

We will also work with another subclass of locally compact quantum groups, called \emph{algebraic quantum groups}. These are defined via a multiplier Hopf$^*$-algebra $(\mathfrak{C}^{\infty}_c(\QG),\Delta)$ and a left Haar integral which satisfy certain conditions, see \cite{KVD}. Every algebraic quantum group gives rise to a locally compact quantum group in such a way that $\mathfrak{C}^{\infty}_c(\QG)$ is dense in $\mrm{C}_0(\QG)$ and the respective comultiplications agree. In particular, compact and discrete quantum groups are special cases of algebraic quantum groups -- the corresponding multiplier Hopf$^*$-algebras are given by respectively $\Pol(\QG)$ and $\mrm{c}_{c}(\QG)=alg-\bigoplus_{\alpha \in \Irr(\whG)} \M_{\dim(\alpha)}$.

We end the preliminary part with a general technical fact, which is easy and likely well-known. As we could not find an exact reference,  we provide a proof.

\begin{prop}\label{abstract}
	Suppose $X$ is a dual Banach space and that $Y$ is a closed subspace of $X$ whose unit ball is weak$^*$-dense in the unit ball of $X$. Then we can identify $X_*$ isometrically with a weak$^*$-dense subspace of $Y^*$; moreover the unit ball of $X_*$ is weak $^*$-dense in the unit ball of $Y^*$. If $Y$ is a $C^*$-algebra which is weak$^*$-dense in a von Neumann algebra $X$, then the positive part of the unit ball of $X_*$ is weak $^*$-dense in the positive part of the unit ball of $Y^*$.
\end{prop}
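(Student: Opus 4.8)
The plan is to establish the three assertions in order, building each on the previous one.

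\medskip

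\textbf{First assertion: $X_*$ embeds isometrically into $Y^*$.} I would define the map $\iota\colon X_* \to Y^*$ by restriction, $\iota(\omega) = \omega|_Y$. It is clearly linear and contractive. To see it is isometric, fix $\omega \in X_*$; since $\|\omega\| = \sup\{|\omega(x)| : x \in X, \|x\| \le 1\}$ and the unit ball of $Y$ is weak$^*$-dense in that of $X$, and $\omega$ is weak$^*$-continuous, the supremum over the unit ball of $X$ equals the supremum over the unit ball of $Y$, i.e.\ $\|\iota(\omega)\| = \|\omega\|$. For weak$^*$-density of $\iota(X_*)$ in $Y^*$: the weak$^*$-topology on $Y^*$ is $\sigma(Y^*, Y)$, so a weak$^*$-closed subspace containing $\iota(X_*)$ that is proper would, by Hahn--Banach, be annihilated by some nonzero $y \in Y \subset X$; but $y$ induces a nonzero element of $X_* {}^* = X$ that vanishes on $X_*$ as a subset of $X = (X_*)^*$ — more carefully, if $\langle \iota(\omega), y\rangle = 0$ for all $\omega \in X_*$ then $\langle \omega, y\rangle_{X_*, X} = 0$ for all $\omega$, forcing $y = 0$. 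Hence $\iota(X_*)$ is weak$^*$-dense.

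\medskip

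\textbf{Second assertion: the unit ball of $X_*$ is weak$^*$-dense in the unit ball of $Y^*$.} This is the step I expect to be the main obstacle, since naive density does not automatically respect norm bounds. The clean way is to invoke a Hahn--Banach separation / bipolar argument: the unit ball $B_{X_*}$, viewed inside $Y^*$, is convex and its weak$^*$-closure $\overline{B_{X_*}}^{w^*}$ is a weak$^*$-closed convex balanced set. If it were properly contained in $B_{Y^*}$, there would be $\psi \in B_{Y^*} \setminus \overline{B_{X_*}}^{w^*}$, and by the bipolar theorem (applied to the dual pair $(Y, Y^*)$) there is $y \in Y$ with $\sup_{\omega \in B_{X_*}} |\langle \omega, y\rangle| \le 1 < |\langle \psi, y\rangle|$. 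But $\sup_{\omega \in B_{X_*}} |\langle \omega, y\rangle| = \|y\|_X = \|y\|_Y$ (first equality because $X = (X_*)^*$ isometrically; second because $Y \subset X$ isometrically), contradicting $\psi \in B_{Y^*}$ since then $|\langle\psi, y\rangle| \le \|\psi\|\,\|y\|_Y \le \|y\|_Y$. Hence $\overline{B_{X_*}}^{w^*} = B_{Y^*}$.

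\medskip

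\textbf{Third assertion: the positive part.} Now assume $Y$ is a $C^*$-algebra weak$^*$-dense in the von Neumann algebra $X$. Given a positive $\psi \in Y^*$ with $\|\psi\| \le 1$, by the second assertion there is a net $(\omega_j)$ in $B_{X_*}$ with $\omega_j \to \psi$ weak$^*$ in $Y^*$. The issue is that the $\omega_j$ need not be positive. I would fix this by decomposing: each $\omega_j \in X_* = \LL^1$-type predual has a Jordan-type decomposition, but more efficiently one uses that the positive part of $B_{X_*}$ is already weak$^*$-dense in the positive part of $B_{X^*}$ (a standard fact: normal states are weak$^*$-dense in all states of a von Neumann algebra, by Kaplansky-type density or directly). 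Concretely: the positive part of $B_{Y^*}$ is the state-or-subnormalized-state set of the $C^*$-algebra $Y$; restricting states on $X$ (the enveloping von Neumann algebra direction goes the wrong way, so instead) one argues that for positive $\psi \in Y^*$, $\|\psi\| = \psi(1)$ if $Y$ is unital, and one can approximate $\psi$ weak$^*$ by positive normal functionals by first extending $\psi$ to a positive functional $\tilde\psi$ on $X = Y''$ of the same norm (via weak$^*$-density, $\tilde\psi(x) := \lim \psi$ along a net realizing $x$ — needs care; alternatively extend by Hahn--Banach preserving positivity since $Y$ contains an approximate unit), then using that normal positive functionals of norm $\le 1$ are weak$^*$-dense in all positive functionals of norm $\le 1$ on $X$ (classical), and finally restricting back to $Y$, which is weak$^*$-continuous. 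Combining these approximations and using that restriction $X^* \to Y^*$ maps the positive unit ball onto a weak$^*$-dense subset of the positive unit ball of $Y^*$ finishes the proof.
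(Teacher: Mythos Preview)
Your proposal is correct. The first assertion is handled essentially as in the paper. For the second assertion you use a bipolar/separation argument in the dual pair $(Y,Y^*)$, whereas the paper takes a more direct route: given a contractive $\omega \in Y^*$, extend it by Hahn--Banach to a contractive $\tilde{\omega} \in X^*$, then apply Goldstine's theorem to approximate $\tilde{\omega}$ weak$^*$ by contractive elements of $X_*$, and restrict back to $Y$. Both arguments are standard and of comparable weight; yours is slightly more conceptual, the paper's slightly more constructive. For the third assertion your outline (extend $\psi$ positively to $X$, approximate by normal positive functionals, restrict) is exactly the paper's strategy; the only difference is that you invoke as ``classical'' the weak$^*$-density of normal states among all states of a von Neumann algebra, whereas the paper supplies a short proof of this step: starting from contractive nonzero $\phi_i \in X_*$ converging weak$^*$ to a state $\phi \in X^*$ (furnished by the second assertion), one checks $\|\phi_i\| \to 1$ (since $|\phi_i(\I)| \to \phi(\I) = 1$), invokes Takesaki's result on convergence of absolute values to get $|\phi_i| \to |\phi| = \phi$ weak$^*$, and then normalizes.
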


\begin{proof}
	Indeed, given $\phi \in X_*$ we can always restrict it to $Y$; this procedure is injective, as $Y$ is weak$^*$-dense, and isometric due to the assumption of the weak$^*$-density of the unit ball of $Y$ in the unit ball of $X$. 
	On the other hand, given a contractive $\omega \in Y^*$ we can first extend it by Hahn-Banach to a contractive $\tilde{\omega} \in X^*$ and then approximate $\tilde{\omega}$ in weak$^*$-topology by contractive functionals in $X_*$ using Goldstine's theorem; this does the job. 
	
	For the last part note first that due to the Kaplansky Density Theorem the density of the balls holds automatically. Moreover we can work with states and then the only non-trivial element is the fact that we can approximate states on $X$ by states in $X_*$. Let then $\phi \in X^*$ be a state and let $(\phi_i)_{i \in \Ind}$ be a net of contractive non-zero elements in $X_*$ convergent to $\phi$ in weak$^*$-topology. Note that we must have $\lim_{i \in \Ind}\|\phi_i\| = 1 = \|\phi\|$ (as $1 \geq \|\phi_i\| \geq |\phi_i(\I)|\xrightarrow[i \in \Ind]{} \phi(\I) = 1$). By \cite[Proposition 4.11]{Takesaki} the absolute value of $\phi_i$ -- which is a contractive non-zero positive element of $X_*$ -- converges in the weak$^*$-topology of $X^*$ to the absolute value of $\phi$ (i.e.\ to $\phi$). Then $(|\phi_i|(\I)^{-1}|\phi_i|)_{i \in \Ind}$ is the desired net.   
\end{proof}

\subsection*{Quantum Fourier-Stieltjes algebras}

Let us start with the following definition, in the spirit of \cite[D\'efinition 2.2]{Eymard}.

\begin{deft}
	Suppose that $\emptyset\neq S \subset \Rep(\QG)$. Denote by $B_S(\QG)$ the set of all coefficients of unitary representations weakly contained in  $S$ (equivalently coefficients of representations in $\overline{S})$:
	\[ B_S(\QG)= \{(\id \ot \omega_{\xi, \eta})(U^*)\,|\,U\preceq S, \xi, \eta \in \Hil_U\} \subset \C_b(\QG).\]
	On the other hand define $J_S= \bigcap_{U \preceq S} \Ker (\phi_U)$ and let $\C^S_0(\hQG)= \C_0^u(\hQG)/J_S$. 	
\end{deft}

Note that in particular \emph{the Fourier-Stieltjes algebra of} $\QG$, going back at least to \cite{DawsMultipliers0}, is $B(\QG)= B_{\{\wW\}}(\QG) = \{(\id \ot \omega_{\xi, \eta})(U^*)\,|\,U\in \Rep(\QG), \xi, \eta \in \Hil_U\}$. We shall denote by $\lambda$ the left regular representation of $\QG$ and write $B_\lambda(\QG)$ for 
$B_{\{\ww\}}(\QG)$. We will also need to consider $S_{mix}$, the collection of all \emph{mixing} representations of $\QG$, i.e.\ these whose all coefficients belong to $\C_0(\QG)$ (see \cite[Definition 4.1]{QHap}).  We will write $B_0(\QG)$ for $B_{S_{mix}}(\QG)$. Suitable versions of spaces $B_S(\GG)$ were exploited in \cite{BrannanRuan}, following the earlier classical ideas of \cite{bg}, to construct a continuum of different $C^*$-completions of Hopf $^*$-algebras associated to free orthogonal quantum groups.

It is also easy to see -- directly from the definitions -- that $J_S= \bigcap_{U \in S} \Ker (\phi_U)$ and that $\C^S_0(\hQG)$ can be alternatively described as a completion of $\LL_1^{\sharp}(\QG)$ (say viewed as a $^*$-subalgebra of $ \C_0^u(\hQG)$) with respect to the norm $\|f\|_S= \sup_{U \in S}\|\phi_U(f)\|$. If $\QG$ is discrete we can also describe $\C^S_0(\hQG)$ as the analogous completion of $\Pol(\hQG)$. Using the fact that a direct sum of representations weakly contained in $S$ is also weakly contained in $S$ we deduce that $B_S(\QG)$ is a vector subspace of $\C_b(\QG)$.

Note that given $U\preceq S$ and $\xi, \eta \in\Hil_U$ the functional $\omega_{\xi,\eta}\circ\phi_U\in \mrm{C}_0^u(\whG)^*$ vanishes on $J_S$ and hence defines a new functional on the quotient space $\C_0^S(\hQG)=\mrm{C}_0^u(\whG)/J_S$. We will often abuse the notation and denote it again by $\omega_{\xi,\eta}\circ\phi_U$.

\begin{lem} \label{lem:dual}
		Suppose that $\emptyset\neq S \subset \Rep(\QG)$.
The formula 
\[ (\id \ot \omega_{\xi, \eta})(U^*) \mapsto \omega_{\xi, \eta} \circ \phi_U, \;\;\; U\preceq S, \xi, \eta \in \Hil_U \]
defines a linear bijection between $B_S(\QG)$ and $\C^S_0(\hQG)^*$.	
\end{lem}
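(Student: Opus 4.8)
The plan is to deduce the statement from the known identification \cite{DawsMultipliers0} of the Fourier--Stieltjes algebra $B(\QG)$ with $\mrm{C}_0^u(\hQG)^*$: the assignment $\nu\mapsto(\id\ot\nu)(\wW^*)$ is a linear isometric isomorphism from $\mrm{C}_0^u(\hQG)^*$ onto $B(\QG)$, and in particular it is injective. Compatibility with coefficients is the elementary observation that for $U\in\Rep(\QG)$ one has $U^*=(\id\ot\phi_U)(\wW^*)$, whence
\[
(\id\ot\omega_{\xi,\eta})(U^*)=\bigl(\id\ot(\omega_{\xi,\eta}\circ\phi_U)\bigr)(\wW^*),\qquad\xi,\eta\in\Hil_U,
\]
so that under the above isomorphism the coefficient of $U$ attached to $\xi,\eta$ corresponds precisely to $\omega_{\xi,\eta}\circ\phi_U\in\mrm{C}_0^u(\hQG)^*$. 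Since $\C^S_0(\hQG)=\mrm{C}_0^u(\hQG)/J_S$, its dual is canonically the annihilator $J_S^{\perp}\subseteq\mrm{C}_0^u(\hQG)^*$, and the map in the statement is just the inverse of $\nu\mapsto(\id\ot\nu)(\wW^*)$ restricted to $B_S(\QG)$, with values read inside $J_S^{\perp}=\C^S_0(\hQG)^*$. So the task will reduce to showing that $\nu\mapsto(\id\ot\nu)(\wW^*)$ carries $J_S^{\perp}$ bijectively onto $B_S(\QG)$.

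Once this is shown, well-definedness, linearity and injectivity of the claimed map will be automatic: injectivity of $\nu\mapsto(\id\ot\nu)(\wW^*)$ forces $\omega_{\xi,\eta}\circ\phi_U$ to be uniquely determined by the coefficient $(\id\ot\omega_{\xi,\eta})(U^*)$, independently of how this element of $\Cb(\QG)$ was presented as a coefficient, and it makes distinct functionals correspond to distinct coefficients; linearity will be inherited from that of $\nu\mapsto(\id\ot\nu)(\wW^*)$, using that $B_S(\QG)$ is already known to be a linear subspace of $\Cb(\QG)$. Moreover, whenever $U\preceq S$ we have $J_S\subseteq\Ker(\phi_U)$, so $\omega_{\xi,\eta}\circ\phi_U$ annihilates $J_S$ and thus descends to an element of $J_S^{\perp}=\C^S_0(\hQG)^*$; this gives the inclusion of the range of the map into $\C^S_0(\hQG)^*$.

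The real content will be surjectivity onto $\C^S_0(\hQG)^*$. Given $\nu\in J_S^{\perp}$, let $\tilde\nu\in\C^S_0(\hQG)^*$ be the induced functional, and let $\pi\colon\C^S_0(\hQG)\to\B(\Kil)$ be the universal representation of the $\mrm{C}^*$-algebra $\C^S_0(\hQG)$. Since every bounded functional on a $\mrm{C}^*$-algebra is a vector functional in its universal representation, we may write $\tilde\nu=\omega_{\eta,\xi}\circ\pi$ for suitable $\eta,\xi\in\Kil$. Then $\phi:=\pi\circ q_S\colon\mrm{C}_0^u(\hQG)\to\B(\Kil)$, with $q_S$ the quotient $*$-homomorphism onto $\C^S_0(\hQG)$, is a non-degenerate $*$-representation with $\Ker(\phi)=\Ker(q_S)=J_S$ (because $\pi$ is faithful). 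By the one-to-one correspondence between non-degenerate $*$-representations of $\mrm{C}_0^u(\hQG)$ and unitary representations of $\QG$ there is $U\in\Rep(\QG)$ with $\phi_U=\phi$ and $\Hil_U=\Kil$; since $\Ker(\phi_U)=J_S=\bigcap_{V\in S}\Ker(\phi_V)$ we get $U\preceq S$. Finally
\[
(\id\ot\omega_{\eta,\xi})(U^*)=\bigl(\id\ot(\omega_{\eta,\xi}\circ\pi\circ q_S)\bigr)(\wW^*)=\bigl(\id\ot(\tilde\nu\circ q_S)\bigr)(\wW^*)=(\id\ot\nu)(\wW^*),
\]
so $(\id\ot\omega_{\eta,\xi})(U^*)\in B_S(\QG)$ is mapped to $\omega_{\eta,\xi}\circ\phi_U=\nu$, as required. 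I expect the main obstacle to sit precisely in this last paragraph: invoking that every bounded functional on a $\mrm{C}^*$-algebra is a vector functional of its universal representation, and lifting $\phi$ back to a genuine unitary representation of $\QG$ weakly contained in $S$; the rest is bookkeeping around the identification $B(\QG)\cong\mrm{C}_0^u(\hQG)^*$, whose injectivity is itself used crucially.
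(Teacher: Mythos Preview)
Your argument is correct and follows essentially the same route as the paper: both proofs rest on the injectivity of $\nu\mapsto(\id\ot\nu)(\wW^*)$ (the paper phrases this as ``left slices of $\wW^*$ are dense in $\C_0^u(\hQG)$'') for well-definedness and injectivity, and both obtain surjectivity by realising a given functional on $\C_0^S(\hQG)$ as a vector functional in a representation $\pi$ and then lifting $\pi\circ q_S$ back to a unitary representation $U\preceq S$. The only cosmetic difference is that the paper reduces to states and uses the GNS construction, whereas you treat arbitrary functionals at once via the universal representation; either variant works.
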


\begin{proof}
Suppose that $U,V\preceq S$ and $\xi, \eta \in \Hil_U$, $\xi', \eta' \in \Hil_V$,
\[ (\id \ot \omega_{\xi, \eta})(U^*) = (\id \ot \omega_{\xi', \eta'})(V^*).
\]
Then we have -- understanding the functionals $\omega_{\xi, \eta} \circ \phi_U$ and $\omega_{\xi', \eta'} \circ \phi_V$ as functionals on $\C_0^u(\hQG)$ -- the following equality:
\[ 
(\id \ot \omega_{\xi, \eta}\circ \phi_U)(\wW^*) = (\id \ot \omega_{\xi', \eta'}\circ \phi_V)(\wW^*). 
\]
As left slices of $\wW^*$ are dense in $\C_0^u(\hQG)$ by \cite[Proposition 4.2]{Johan}, we have $\omega_{\xi, \eta}\circ \phi_U =\omega_{\xi', \eta'}\circ \phi_V$. We view the latter as bounded functionals on $\C_0^u(\hQG)$, but as $U,V \preceq S$, they also descend to (equal) functionals in $\C^S_0(\hQG)^*$.
This implies that the map introduced in the lemma is well-defined and injective. 

As it is clearly linear, to verify surjectivity it suffices to consider states on $\C_0^S(\hQG)$. Every such state $\omega$ is of the form $\omega_{\xi, \xi}\circ \pi$, where $\pi\colon\C_0^S(\hQG) \to \B(\Hil)$ is a representation, and $\xi \in \Hil$. But then we can consider $\pi\circ q_S\colon \C_0^u(\hQG) \to \B(\Hil)$ and note that $\pi \circ q_S = \phi_V$ for $V\in \Rep(\QG)$, $V \preceq S$. Naturally we have $\omega = \omega_{\xi, \xi}\circ \phi_V$.
\end{proof}

The result of the last lemma allows us to introduce the norm on $B_S(\QG)$ induced by the norm of $\C^S_0(\hQG)^*$, namely set
\[
\|a\|_{B_S(\QG)}=\|\omega_{\xi,\eta}\circ\phi_U\|_{\C_0^S(\hQG)^*}\quad(a=(\id\otimes \omega_{\xi,\eta})(U^*)\in B_S(\QG)).
\]

It is also worth noting that in the case where $S=\Rep(\QG)$ the inverse of the map above is given simply by the formula 
\begin{equation} \C_0^u(\hQG)^*\ni \mu \mapsto (\id \ot \mu)(\wW^*) \in B(\QG).\label{inversemap}\end{equation}

Recall the definition of the Fourier algebra (for the early definitions in the quantum group context see for example \cite{DawsMultipliers} or \cite{DKSS}, although these papers use a different convention; here we follow rather \cite{Brannan}):
\begin{equation}
 \A(\QG)=  \{(\id \ot \omega_{\xi, \eta})(\ww^*)\,|\, \xi, \eta \in \LL^2(\QG)\} \subset \C_0(\QG).
\end{equation}
It is well-known that the formula appearing in Lemma \ref{lem:dual} identifies $\A(\QG)$ with $\LL^1(\hQG)$; this can be proved following the same lines as above, using the fact that $\LL^\infty(\hQG)$ acts on $\LL^2(\QG)$ in a standard form. Again we view $\A(\QG)$ as a normed space equipped  with the norm of $\LL^1(\hQG)$. Note that this space has been successfully exploited in the quantum setting for example in \cite{Caspers} to study questions related to weak amenability.

\begin{lem}\label{lem:inclusions}
Let $S, T \subset \Rep(\QG)$, $\lambda \in S \subset T$. We have natural isometric inclusions $\A(\QG) \subset B_\lambda(\QG) \subset B_S(\QG) \subset B_T(\QG)\subset B(\QG)$. Moreover for $f \in B(\QG)\subset \C_b(\QG)$ we have $\|f\|_{\C_b(\QG)} \leq \|f\|_{B(\QG)}$. 
\end{lem}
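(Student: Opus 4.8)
The plan is to establish the chain of inclusions $\A(\QG) \subset B_\lambda(\QG) \subset B_S(\QG) \subset B_T(\QG) \subset B(\QG)$ essentially at the level of the dual picture provided by Lemma \ref{lem:dual}, where each $B_S(\QG)$ is isometrically $\C_0^S(\hQG)^*$ and $\A(\QG)$ is $\LL^1(\hQG)$. The key observation is that whenever $S \subseteq T$, we have $J_T \subseteq J_S$ as ideals of $\C_0^u(\hQG)$ (an intersection over a larger family is smaller), hence there is a canonical surjective $^*$-homomorphism $\C_0^T(\hQG) \twoheadrightarrow \C_0^S(\hQG)$ compatible with the quotient maps from $\C_0^u(\hQG)$. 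Taking adjoints yields an isometric inclusion $\C_0^S(\hQG)^* \hookrightarrow \C_0^T(\hQG)^*$, and I would check that under the bijections of Lemma \ref{lem:dual} this inclusion is exactly the identity on coefficient functions $(\id \ot \omega_{\xi,\eta})(U^*)$ — which is immediate since a representation $U$ with $U \preceq S$ also satisfies $U \preceq T$, so the same coefficient is named by the same data on both sides. This handles $B_S(\QG) \subset B_T(\QG)$ and, taking $T = \Rep(\QG)$, also $B_T(\QG) \subset B(\QG)$; taking $S = \{\lambda\}$, $T = S$ gives $B_\lambda(\QG) \subset B_S(\QG)$, using that $\lambda \in S$.

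For the inclusion $\A(\QG) \subset B_\lambda(\QG)$, I would use that $\A(\QG)$ is identified with $\LL^1(\hQG)$ (as noted in the excerpt, via the standard form action of $\LL^\infty(\hQG)$ on $\LL^2(\QG)$), while $B_\lambda(\QG) = \C_0^{\{\ww\}}(\hQG)^* = \C_0^\lambda(\hQG)^*$ where $\C_0^\lambda(\hQG) = \C_0^u(\hQG)/\Ker(\phi_\lambda)$. Since $\phi_\lambda$ has image the reduced algebra — concretely $\C_0^\lambda(\hQG) \cong \CStarR(\hQG)$, equivalently $\C_0^\lambda(\hQG) = \Lambda_{\hQG}(\C_0^u(\hQG))$, the norm-closed version of $\C_0(\hQG)$ acting on $\LL^2(\QG)$ — its dual $\C_0^\lambda(\hQG)^*$ contains $\LL^1(\hQG)$ isometrically as the normal functionals, and this inclusion again matches coefficients of the left regular representation with the corresponding elements of $\LL^1(\hQG)$ under Lemma \ref{lem:dual}. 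So all that is needed here is to identify $\C_0^\lambda(\hQG)$ with the reduced $\cst$-algebra and recall that $\LL^1$ sits isometrically inside the dual of the reduced $\cst$-algebra (a standard fact, e.g.\ because $\LL^\infty(\hQG)$ is the von Neumann algebra generated by $\C_0(\hQG)$ acting in standard form).

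The final sentence, $\|f\|_{\C_b(\QG)} \le \|f\|_{B(\QG)}$ for $f \in B(\QG)$, I would prove using the explicit inverse map \eqref{inversemap}: writing $f = (\id \ot \mu)(\wW^*)$ with $\|f\|_{B(\QG)} = \|\mu\|_{\C_0^u(\hQG)^*}$, we have for any state $\omega \in \LL^1(\QG)$ that $|\omega(f)| = |(\omega \ot \mu)(\wW^*)| \le \|\mu\| \, \|(\omega \ot \id)(\wW^*)\| \le \|\mu\|$, since $\wW^*$ is a contraction (indeed unitary) in $\M(\C_0(\QG) \ot \C_0^u(\hQG))$ and $\omega$ is a contractive functional; taking the supremum over such $\omega$ gives $\|f\|_{\C_b(\QG)} \le \|\mu\| = \|f\|_{B(\QG)}$. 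Once this is in place, the analogous bound $\|f\|_{\C_b(\QG)} \le \|f\|_{B_S(\QG)}$ for $f \in B_S(\QG)$ follows from the isometric inclusion $B_S(\QG) \subset B(\QG)$ already established.

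I expect the main obstacle to be purely bookkeeping: making the identifications of Lemma \ref{lem:dual} consistent across the different values of $S$ so that the abstract dual inclusions $\C_0^S(\hQG)^* \subset \C_0^T(\hQG)^*$ really do restrict to the concrete set-theoretic inclusions $B_S(\QG) \subset B_T(\QG)$ as subspaces of $\C_b(\QG)$, and checking that the $\A(\QG) = \LL^1(\hQG)$ identification is the restriction of the $B_\lambda(\QG) = \C_0^\lambda(\hQG)^*$ one. No single step is genuinely hard, but care is needed that "the same coefficient function" is matched to "the same functional" at each stage — this is where the density of left slices of $\wW^*$ (used already in the proof of Lemma \ref{lem:dual}) does the work of rigidifying the correspondence.
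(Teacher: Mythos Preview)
Your approach is essentially the same as the paper's: the paper also reduces to Lemma \ref{lem:dual} together with properties of duals of quotient $\cst$-algebras, and for the inclusion $\LL^1(\hQG) \subset \C_0(\hQG)^*$ it invokes Proposition \ref{abstract} (with $Y=\C_0(\hQG)$, $X=\LL^\infty(\hQG)$), which is precisely the ``standard fact'' you allude to; for the norm inequality it likewise appeals to formula \eqref{inversemap}. One small slip in your last paragraph: taking the supremum of $|\omega(f)|$ over \emph{states} $\omega$ does not in general recover $\|f\|$ (consider a nilpotent $2\times 2$ matrix), so either run your estimate for arbitrary contractive $\omega \in \LL^1(\QG)$---the inequality $|(\omega\ot\mu)(\wW^*)|\le\|\omega\|\,\|\mu\|$ needs no positivity---or simply note directly that $\|(\id \ot \mu)(\wW^*)\| \le \|\mu\|\,\|\wW^*\| = \|\mu\|$ since slicing by a bounded functional is a contraction.
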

\begin{proof}
The first part is an easy consequence of Lemma \ref{abstract} (applied to $Y=\C_0(\hQG)$ and $X = \LL^\infty(\hQG)$), Lemma \ref{lem:dual} and properties of dual spaces of quotient $\C^*$-algebras.	The second follows from the formula \eqref{inversemap}.
\end{proof}

An element $x$ in $B(\QG)$ is said to be \emph{positive-definite} if it is positive as a functional on $\C_0^u(\hQG)$. For an extended discussion of this notion and equivalent characterizations see \cite{MattPekka}; note in particular that it makes sense to talk of positive-definite functions in $\LL^{\infty}(\QG)$ -- which then automatically turn out to belong to $B(\QG)$, at least if $\QG$ is coamenable.
Note however that the authors of \cite{MattPekka} call the elements as above rather `completely positive-definite functions'.

 We shall say that a (quantum) positive-definite function is \emph{normalised} if the associated functional is a state.
Note that by \cite[Lemma 4.3]{QHap} positive-definite functions in $\C_0(\QG)$ automatically belong to $B_0(\QG)$ (as the relevant GNS representations are mixing).
	
	The next statement is well-known for discrete quantum groups.

\begin{prop}\label{prop:A(G)}
	Let $\QG$ be an algebraic quantum group. Then
	\begin{enumerate}
	\item $\mathfrak{C}^{\infty}_c(\GG)\subset \A(\QG)$;
	\item $\Lambda_{\vp}(\mathfrak{C}^{\infty}_c(\GG))=\Lambda_{\psi}(\mathfrak{C}^{\infty}_c(\GG))=\Lambda_{\widehat{\vp}}(\mathfrak{C}^{\infty}_c(\hQG))=\Lambda_{\widehat{\psi}}(\mathfrak{C}^{\infty}_c(\hQG)) $;
	\item 
 $\A(\QG)$ coincides with the linear span of ($\A(\QG)$-) norm limits of normalised positive definite functions in $\mathfrak{C}^{\infty}_c(\GG)$.
\end{enumerate}
\end{prop}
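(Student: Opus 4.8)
The plan is to treat the three statements in turn, noting that they are closely intertwined.

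For (1), the natural approach is to show that every $a\in\mathfrak{C}^\infty_c(\GG)$ can be written as $(\id\otimes\omega_{\xi,\eta})(\ww^*)$ for suitable $\xi,\eta\in\LL^2(\QG)$. The key point is that $\mathfrak{C}^\infty_c(\GG)$ is stable under slicing $\ww^*$ with functionals coming from the Haar integral: using the defining algebraic structure of an algebraic quantum group (the formula $\ww^*$ in terms of the canonical multiplier algebra pairing, cf.\ \cite{KVD}) one sees that $(\id\otimes\widehat\psi(\,\cdot\,\Lambda_{\widehat\psi}^{-1}(\eta)^*))(\ww^*)$ lands in $\mathfrak{C}^\infty_c(\GG)$ and, conversely, every element of $\mathfrak{C}^\infty_c(\GG)$ arises this way because the Haar integral $\widehat\psi$ restricted to $\mathfrak{C}^\infty_c(\hQG)$ is faithful and the pairing between $\mathfrak{C}^\infty_c(\GG)$ and $\mathfrak{C}^\infty_c(\hQG)$ is non-degenerate. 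Concretely I expect to fix $\eta = \Lambda_{\widehat\psi}(b)$ for $b\in\mathfrak{C}^\infty_c(\hQG)$, vary $\xi$ over $\Lambda_{\widehat\psi}(\mathfrak{C}^\infty_c(\hQG))$, and recognise the resulting coefficient of $\ww^*$ as an element of $\mathfrak{C}^\infty_c(\GG)$ via the explicit description of $\ww$ on the algebraic level. Since $\A(\QG)$ is identified isometrically with $\LL^1(\hQG)$, this amounts to identifying $\mathfrak{C}^\infty_c(\GG)$ with a subspace of $\LL^1(\hQG)$ consisting of functionals of the form $\widehat\psi(\,\cdot\, b)$ (or $\widehat\varphi(b\,\cdot\,)$) with $b\in\mathfrak{C}^\infty_c(\hQG)$.

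Statement (2) is then essentially an equality of four dense subspaces of $\LL^2(\QG)=\LL^2(\hQG)$. The identities $\Lambda_\vp(\mathfrak{C}^\infty_c(\GG))=\Lambda_\psi(\mathfrak{C}^\infty_c(\GG))$ and $\Lambda_{\widehat\vp}(\mathfrak{C}^\infty_c(\hQG))=\Lambda_{\widehat\psi}(\mathfrak{C}^\infty_c(\hQG))$ follow from the fact that, for an algebraic quantum group, $\psi = \varphi(\delta^{1/2}\,\cdot\,\delta^{1/2})$ for the (analytic, group-like) modular element $\delta$, which is affiliated in a controlled way to the algebra and maps $\mathfrak{C}^\infty_c$ into itself; hence $\Lambda_\psi(a) = \Lambda_\vp(\delta^{1/2}a\delta^{1/2})$ with $\delta^{1/2}a\delta^{1/2}\in\mathfrak{C}^\infty_c(\GG)$ and vice versa. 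The cross-identity $\Lambda_\vp(\mathfrak{C}^\infty_c(\GG)) = \Lambda_{\widehat\psi}(\mathfrak{C}^\infty_c(\hQG))$ is the crucial one: here I would use the algebraic Fourier transform of \cite{KVD}, which is precisely a bijection $\mathfrak{C}^\infty_c(\GG)\to\mathfrak{C}^\infty_c(\hQG)$ intertwining the respective GNS maps (up to the conventions one has fixed), together with part (1) to line up the correct weights. This is the step I expect to be the main obstacle, because it requires careful bookkeeping of the four weights, the modular element, the various GNS maps and the chosen conventions (left versus right objects), and sign/adjoint conventions in the definition of coefficients — getting all of these to match simultaneously is where the real work lies, even though each individual fact is standard.

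Finally, for (3) one inclusion is easy: each normalised positive-definite function in $\mathfrak{C}^\infty_c(\GG)$ lies in $\A(\QG)$ by (1) and $\A(\QG)$ is a closed subspace, so the linear span of $\A(\QG)$-norm limits of such functions is contained in $\A(\QG)$. For the reverse inclusion, I would argue that $\mathfrak{C}^\infty_c(\GG)$, which is dense in $\A(\QG)$ by (1) and the density of $\mathfrak{C}^\infty_c(\hQG)$ in $\LL^1(\hQG)$ (equivalently, faithfulness of the Haar weight), is already spanned by its positive-definite elements. Concretely, using the $^*$-algebra structure, every element of $\mathfrak{C}^\infty_c(\GG)$ is a linear combination of elements of the form $a^*a$ with $a\in\mathfrak{C}^\infty_c(\GG)$ (polarisation, since $\mathfrak{C}^\infty_c(\GG)$ has local units), and $a^*a$ corresponds under the identification of (1) to a positive functional on $\C_0^u(\hQG)$, hence is positive-definite; dividing by its value at the counit normalises it. Taking linear combinations and then $\A(\QG)$-norm limits gives all of $\A(\QG)$. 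The only mild subtlety is checking that $a^*a$ really is positive-definite in the sense of the paper (i.e.\ positive as a functional on $\C_0^u(\hQG)$), which follows since it is of the form $\omega_{\xi,\xi}\circ\phi_U$ for the GNS representation $\phi_U$ associated to the relevant coefficient, exactly as in the proof of Lemma \ref{lem:dual}.
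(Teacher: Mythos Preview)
Your plans for (1) and (2) are essentially the paper's argument, just phrased on the dual side. The paper uses the formula from \cite[Lemma 2.3]{KVD},
\[
(\id\otimes\omega_{\Lambda_\vp(a),\Lambda_\vp(b)})(\ww^*)=(\id\otimes\vp)\bigl((\I\otimes a^*)\Delta(b)\bigr),
\]
together with the cancellation axiom $\mathfrak{C}^\infty_c(\GG)=\textup{span}\{(\id\otimes\vp)((\I\otimes a^*)\Delta(b))\}$ for (1), and for (2) the explicit identities $\Lambda_{\widehat\psi}(a\vp)=\Lambda_\vp(a)$ and $\Lambda_\psi(a)=\Lambda_\vp(a\delta^{1/2})$ from \cite{KVD}. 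Your formula $\Lambda_\psi(a)=\Lambda_\vp(\delta^{1/2}a\delta^{1/2})$ is not the right one, but the mechanism you describe (powers of $\delta$ multiply $\mathfrak{C}^\infty_c(\GG)$ into itself) is correct and is all that is needed.

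Your argument for (3), however, has a genuine gap. You claim that $a^*a$, for $a\in\mathfrak{C}^\infty_c(\GG)$, is positive-definite. It is not. Positivity of $a^*a$ in the $\mrm{C}^*$-algebra $\C_0(\QG)$ is an entirely different condition from positivity of the corresponding functional on $\C_0^u(\hQG)$, which is what ``positive-definite'' means here. Already for $\QG=\mathbb{Z}$ this fails: take $a=\delta_1\in c_c(\mathbb{Z})$, so $a^*a=|a|^2=\delta_1$ pointwise, and under $\A(\mathbb{Z})\simeq\LL^1(\mathbb{T})$ this corresponds to $\theta\mapsto e^{i\theta}$, which is not a positive functional. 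There is no reason the functional attached to $a^*a$ should be of the form $\omega_{\xi,\xi}\circ\phi_U$.

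The correct route, which the paper takes, is to observe from the formula above that the positive-definite elements of $\mathfrak{C}^\infty_c(\GG)$ arising from vectors in $\Lambda_\vp(\mathfrak{C}^\infty_c(\GG))$ are exactly the elements $(\id\otimes\vp)((\I\otimes a^*)\Delta(a))$. Since $\Lambda_\vp(\mathfrak{C}^\infty_c(\GG))$ is dense in $\LL^2(\QG)$, every vector state $\omega_\xi$ on $\LL^\infty(\hQG)$ is a norm limit of vector states $\omega_{\Lambda_\vp(a)}$, so every normalised positive-definite function in $\A(\QG)$ is an $\A(\QG)$-norm limit of normalised positive-definite functions in $\mathfrak{C}^\infty_c(\GG)$; polarisation then gives all of $\A(\QG)$.
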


\begin{proof}
From \cite[Lemma 2.3]{KVD}, we have that for every $a, b \in \mathfrak{C}^{\infty}_c(\GG)$
	\[
(\id \ot \omega_{\Lambda_\vp(b), \Lambda_\vp(a)})(\ww) = (\id \otimes \varphi) (\Com(b^*)(\I\otimes a)),
	\]
so that
\begin{equation}\label{KVDformula}
(\id \ot \omega_{\Lambda_\vp(a), \Lambda_\vp(b)})(\ww^*) = (\id \otimes \varphi) ((\I\otimes a^*)\Com(b)).
\end{equation}	

By the axioms of algebraic quantum groups we have 
	\[\mathfrak{C}^{\infty}_c(\GG) = \textup{span} \{(\id \otimes \varphi) ((\I\otimes a^*)\Com(b))\,|\, a, b \in \mathfrak{C}^{\infty}_c(\GG)\}\]
	hence the first claim follows from equation \eqref{KVDformula}.
	
Recall that one can identify $\mathfrak{C}^{\infty}_c(\whG)$ with the space of functionals $\{a\vp\,|\,a\in \mathfrak{C}^{\infty}_c(\GG)\}$ on $\mathfrak{C}^{\infty}_c(\GG)$. Then $\Lambda_{\widehat\psi}(a\vp)=\Lambda_\vp(a)$ (\cite[Page 1077]{KVD}), which readily implies $\Lambda_{\widehat\psi}(\mathfrak{C}^{\infty}_c(\whG))=\Lambda_\vp(\mathfrak{C}^{\infty}_c(\GG))$. Next, by \cite[Definition 8.13]{KVD} for any $z\in \CC$, the operator $\delta^{z}$ is a multiplier of $\mathfrak{C}^{\infty}_c(\GG)$ and $\Lambda_{\psi}(a)=\Lambda_\vp(a\delta^{1/2})$ by \cite[Page 1134]{KVD} (see also the proof of \cite[Lemma 9.15]{KVD}). This shows that $\Lambda_\vp(\mathfrak{C}^{\infty}_c(\GG))=\Lambda_\psi(\mathfrak{C}^{\infty}_c(\GG))$. Since the dual of an algebraic quantum group is also an algebraic quantum group, this finishes the claim.
	
It follows from the very construction that the image of $\Lambda_\vp|_{\mathfrak{C}^{\infty}_c(\QG)}$ is dense in $\LL^2(\QG)$. Thus we can approximate in norm any vector state $\omega_\xi$ on $\LL^\infty(\hQG)$ for $\xi \in \LL^2(\hQG)$ by vector states associated  with vectors in $\Lambda_\vp(\mathfrak{C}^{\infty}_c(\GG))$.  Together with the previous paragraph it shows that each normalised positive-definite function in $\A(\QG)$ is a  norm limit of normalised positive-definite functions in $\mathfrak{C}^{\infty}_c(\GG)$, which together with the polarisation identity  proves the third part of the lemma.

\end{proof}

Note that the Fourier-Stieltjes algebra $B(\QG)$ is closed with respect to natural left and right actions of $\LL^1(\QG)$ induced from $\LL^{\infty}(\QG)$ (see \cite[Proof of Proposition 2.15]{DSV}). The existence of the \emph{Godement mean}  (\cite{Godement}) for quantum $B(\QG)$, i.e.\ a specific bi-invariant linear contractive unital functional $M\colon B(\QG)\to \bc$ of norm $1$,  was established in \cite[Proposition 2.15]{DSV} (see also \cite{DasDaws}). Note the following consequence of the proof of \cite[Proposition 2.15]{DSV}.

\begin{prop}\label{Godementmean}
	Let $\QG$ be a locally compact quantum group and let $U$ be a unitary representation of $\QG$ on a Hilbert space $\Hil$. Then the following are equivalent:
	\begin{enumerate}
		\item the Godement mean vanishes on all matrix coefficients of $U$;
		\item $U$ admits no invariant vectors.
	\end{enumerate}
\end{prop}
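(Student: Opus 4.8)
The plan is to establish the equivalence by working with the universal C$^*$-algebra $\C_0^u(\hQG)$ and the GNS-type picture for $U$, and then invoking the construction of the Godement mean from \cite[Proposition 2.15]{DSV}. First I would recall that $U$ corresponds to a non-degenerate $*$-representation $\phi_U\colon \C_0^u(\hQG)\to \B(\Hil)$ via $U=(\id\otimes\phi_U)(\wW)$, and that the matrix coefficients of $U$ are precisely the elements $(\id\otimes\omega_{\xi,\eta}\circ\phi_U)(\wW^*)\in B(\QG)$ for $\xi,\eta\in\Hil$. Under the identification of Lemma \ref{lem:dual} (with $S=\Rep(\QG)$), applying the Godement mean $M$ to such a coefficient amounts to evaluating a certain mean on the functional $\omega_{\xi,\eta}\circ\phi_U\in\C_0^u(\hQG)^*$; concretely, the construction in \cite{DSV} produces, for each $\mu\in\C_0^u(\hQG)^*$, the scalar $M((\id\otimes\mu)(\wW^*))$ via a bi-invariant averaging, and this scalar should be expressible as $\la Q\xi,\eta\ra$ when $\mu=\omega_{\xi,\eta}\circ\phi_U$, where $Q$ is the orthogonal projection onto the subspace $\Hil^U$ of $U$-invariant vectors.

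For the implication (ii)$\Rightarrow$(i): if $U$ has no invariant vectors, then the projection $Q$ onto $\Hil^U$ is zero, so $M$ vanishes on every matrix coefficient. The cleanest way to see this without reconstructing the whole argument of \cite{DSV} is the following. The Godement mean $M$ is the unique bi-invariant state on $B(\QG)$; bi-invariance means $M(\omega\cdot f)=\omega(\I)M(f)=M(f\cdot\omega)$ for $\omega\in\LL^1(\QG)$. The left and right actions of $\LL^1(\QG)$ on a matrix coefficient $a=(\id\otimes\omega_{\xi,\eta})(U^*)$ of $U$ again produce matrix coefficients of $U$, corresponding on $\Hil$ to compressions of $U$ by operators of the form $(\omega\otimes\id)(U^*)$, $(\omega\otimes\id)(U)$. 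Averaging these operators over an approximate invariant-mean net (which exists by the very construction in \cite{DSV}) yields, in the appropriate weak topology, the projection onto $\Hil^U$. Since $\Hil^U=\{0\}$, the average is $0$, and the value of $M$ on all coefficients of $U$ is $0$.

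For the converse (i)$\Rightarrow$(ii), equivalently $\neg$(ii)$\Rightarrow\neg$(i): suppose $\xi\in\Hil$ is a nonzero invariant vector, meaning $U(\eta\otimes\xi)=\eta\otimes\xi$ for all $\eta\in\LL^2(\QG)$, equivalently $\phi_U(\I)$ (the unit of the multiplier algebra) acts as identity and $\xi$ is fixed by the corepresentation. Then the diagonal coefficient $a=(\id\otimes\omega_{\xi,\xi})(U^*)$ equals $\|\xi\|^2\I\in B(\QG)$: indeed invariance of $\xi$ forces $U^*(\eta\otimes\xi)=\eta\otimes\xi$, hence $(\id\otimes\omega_{\xi,\xi})(U^*)=\la\xi,\xi\ra\,\I$. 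Since $M$ is unital and contractive, $M(a)=\|\xi\|^2\neq 0$, so $M$ does not vanish on all coefficients of $U$.

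The main obstacle I anticipate is making the first implication fully rigorous: one needs to know precisely in what topology the averaging net from \cite{DSV} converges and that this limit genuinely computes the projection onto $\Hil^U$ — i.e. one must reconstruct or carefully cite the internal structure of the Godement mean, rather than just its abstract characterisation as the unique bi-invariant state. The identity $M((\id\otimes\omega_{\xi,\eta})(U^*))=\la Q\xi,\eta\ra$ with $Q$ the invariant projection is the technical heart, and verifying it may require revisiting the explicit construction (bi-invariant mean on a suitable C$^*$-algebra, or weak$^*$-cluster point of a net of states) in the proof of \cite[Proposition 2.15]{DSV}.
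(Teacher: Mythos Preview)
Your proposal is correct and follows essentially the same approach as the paper: both reduce the equivalence to the identity $M\bigl((\id\otimes\omega_{\xi,\eta})(U^*)\bigr)=\langle \xi\,|\,p\eta\rangle$, where $p$ is the projection onto the invariant vectors of $U$, and both source this identity from the proof of \cite[Proposition 2.15]{DSV}. Your direct argument for $\neg$(ii)$\Rightarrow\neg$(i) (an invariant $\xi$ gives the constant coefficient $\|\xi\|^2\I$, on which $M$ takes the value $\|\xi\|^2$) is a pleasant self-contained shortcut for that direction, but the paper simply reads off both implications from the displayed formula.
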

\begin{proof}
	Indeed, the proof of \cite[Proposition 2.15]{DSV} shows that for all $\xi, \eta \in \Hil$ we have
	\[ M((\id \otimes \omega_{\xi,\eta})(U^*)) = \langle \xi| p \eta \rangle, \]
	where $p\in \B(\Hil)$ is the projection onto the invariant vectors of $U$.
\end{proof}

The following is  an immediate consequence of Proposition \ref{Godementmean} and the transitivity of the weak containment relation.

\begin{prop}\label{prop2}
Let $S$ be a non-empty collection of representations of $\QG$. Then the following are equivalent:
\begin{enumerate}
	\item the Godement mean vanishes on $B_S(\QG)$;
	\item $S$ does not weakly contain the trivial representation.
\end{enumerate}
\end{prop}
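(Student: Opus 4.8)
The plan is to reduce the statement to Proposition \ref{Godementmean} applied to each representation weakly contained in $S$. Note first that, directly from the definition,
\[
B_S(\QG)=\bigcup_{U\lec S}\bigl\{(\id\ot\omega_{\xi,\eta})(U^*)\,:\,\xi,\eta\in\Hil_U\bigr\},
\]
so the Godement mean $M$ vanishes on $B_S(\QG)$ if and only if it vanishes on all matrix coefficients of every $U\lec S$; by Proposition \ref{Godementmean} this is equivalent to the assertion that no $U\lec S$ admits a non-zero invariant vector. In other words, condition (1) says precisely that \emph{every unitary representation weakly contained in $S$ has trivial fixed-point space}.

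With this reformulation in hand, I would verify (1)$\iff$(2) by proving both implications in contrapositive form. If $S$ weakly contains the trivial representation $\I$, then $\I$ is itself a representation with $\I\lec S$; its (essentially unique) matrix coefficient is the constant function $\I\in\C_b(\QG)$, on which $M$ takes the value $1$ by unitality. Hence $M$ does not vanish on $B_S(\QG)$ and (1) fails. Conversely, if some $U\lec S$ has a non-zero invariant vector, then the trivial representation is a subrepresentation of $U$ and therefore weakly contained in $U$; by the transitivity of the weak containment relation it is then weakly contained in $S$, so (2) fails. Combining these two implications with the reformulation of (1) above completes the proof.

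I do not expect any genuine obstacle here: all the analytic content is already contained in Proposition \ref{Godementmean}, and the remaining ingredients — that a subrepresentation is weakly contained in the ambient representation, that weak containment is transitive, and that the Godement mean is unital (so that $M(\I)=1$) — are elementary and already recorded in, or immediate from, the preliminary material.
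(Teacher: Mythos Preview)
Your proof is correct and follows essentially the same approach as the paper: both use Proposition~\ref{Godementmean} to pass between vanishing of $M$ on coefficients and absence of invariant vectors, then invoke transitivity of weak containment for one direction and $M(\I)=1$ for the other. The paper's version is simply more terse.
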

\begin{proof}
	Indeed, if (1) does not hold then by the previous proposition there is a representation of $\QG$ which is weakly contained in $S$ and contains the trivial representation. Thus $S$ weakly contains the trivial representation.
	
On the other hand if $S$ weakly contains the trivial representation, then $\I\in B_S(\QG)$ and we have $M(\I)=1$.
	
\end{proof}


The last proposition allows us to characterise for a given $\QG$ the coamenability of the dual, and the Haagerup property via the properties of the Godement mean. See \cite[Proposition 2]{Derighetti} for classical analogues and recall that we write $B_\lambda(\QG)$ for $B_{\{\ww\}}(\QG)$ and $B_0(\QG)$ for $B_{S_{mix}}(\QG)$. 

\begin{prop}\label{prop:Godement}
Let $\QG$ be a locally compact quantum group. Then 

\begin{enumerate}
\item	$\widehat{\QG}$ is not coamenable if and only if the Godement mean vanishes on $B_\lambda(\QG)$;
\item $\QG$ does not have the Haagerup property if and only if the Godement mean vanishes on $B_0(\QG)$.
\end{enumerate}
\end{prop}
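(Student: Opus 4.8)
The plan is to derive both equivalences directly from Proposition \ref{prop2} by identifying, in each case, the relevant set $S$ of representations and then recalling a known characterisation of the geometric property in terms of weak containment of the trivial representation.

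For part (1), I would take $S = \{\ww\}$, so $B_\lambda(\QG) = B_S(\QG)$. By Proposition \ref{prop2}, the Godement mean vanishes on $B_\lambda(\QG)$ if and only if $\{\ww\}$ does not weakly contain the trivial representation, i.e.\ if and only if $\lambda \not\succeq \I$. The key input is the well-known fact that $\widehat{\QG}$ is coamenable precisely when the trivial representation of $\QG$ is weakly contained in the left regular representation $\lambda$; this is one of the standard equivalent formulations of coamenability of the dual (see e.g.\ the characterisations of (co)amenability via the regular representation in the quantum group literature). Negating both sides yields the statement.

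For part (2), I would take $S = S_{mix}$, the class of mixing representations, so $B_0(\QG) = B_{S_{mix}}(\QG)$. By Proposition \ref{prop2}, the Godement mean vanishes on $B_0(\QG)$ if and only if $S_{mix}$ does not weakly contain the trivial representation. Here the relevant known fact is the characterisation of the Haagerup property from \cite{QHap}: $\QG$ has the Haagerup property if and only if there is a mixing representation weakly containing the trivial representation (equivalently, containing almost invariant vectors), which is exactly the condition that $S_{mix} \succeq \I$. Again, negating both sides gives the claim.

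I do not expect a serious obstacle here: the proposition is essentially a packaging of Proposition \ref{prop2} together with two standard weak-containment characterisations, one for coamenability of the dual and one for the Haagerup property. The only point requiring a little care is to make sure the known characterisations are quoted in the precise form ``$S \succeq \I$'' matching the definition of $B_S(\QG)$ used in the excerpt — in particular that ``almost invariant vectors in a mixing representation'' is literally the same as ``$S_{mix}$ weakly contains $\I$'' — but this is immediate from the definition of the Fell topology / weak containment and requires no real work.
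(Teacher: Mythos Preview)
Your proposal is correct and follows essentially the same route as the paper: both arguments reduce immediately to Proposition \ref{prop2} and then invoke the appropriate external characterisation (the paper cites \cite[Theorem 3.1]{BT} for part (1) and \cite[Definition 6.1]{QHap} for part (2), which are exactly the weak-containment characterisations you describe). Your closing remark about matching ``almost invariant vectors in a mixing representation'' with ``$S_{mix}\succeq\I$'' is apt but, as you say, immediate---a direct sum of mixing representations is mixing, so the two formulations coincide.
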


\begin{proof}
	The first statement follows from Proposition \ref{prop2} and  \cite[Theorem 3.1]{BT}. The second follows from Proposition \ref{prop2} and the very definition of the Haagerup property for quantum groups, \cite[Definition 6.1]{QHap}. 
\end{proof}

\section{Separation properties for positive-definite functions on locally compact quantum groups} \label{sec:separationpdf}

This section is devoted to establishing various separation properties for quantum positive-definite functions associated with particular classes of representations.

We begin with providing several equivalent descriptions of (normalised) positive-definite functions in $B_S(\QG)$ under a mild assumption on the set $S$.
Note that the left regular representation $\ww$ is (up to equivalence) equal to its multiple, hence the  proposition below applies to the set of positive definite functions in $B_\lambda(\GG)$ with norm bounded by $1$.

\begin{prop}\label{prop:approxpdf}
Let $\emptyset\neq S\subset \Rep(\QG)$ be a set closed under finite direct sums.
The following convex sets are equal:
\begin{enumerate}

\item[(1)] the set of positive definite functions $a$ in $B_S(\QG)$ with $\|a\|_{B_S(\GG)}\le 1$;
\item[(2)] $\{(\id\otimes \omega_{\xi} )(U^*)\, | \, U\lec S, \xi\in \Hil_U, \|\xi\|\le 1\}$;
\item[(3)] $\{ a\in B_S(\GG)\,|\, a\,\textnormal{ is a weak}^*\textnormal{ limit of a net of positive definite functions}$

\hspace{3cm}$\textnormal{of the form } (\id\otimes \omega_\xi) (U^*)\textnormal{ where } \, U\in S, \xi\in \Hil_U, \|\xi\|\le 1\}$;
\item[(4)] $\{ a\in B_S(\GG)\,|\, a\,\textnormal{ is a weak}^*\textnormal{ limit of a net of positive definite functions}$

\hspace{3cm}$\textnormal{of the form } (\id\otimes \omega) (U^*)\textnormal{ where } \, U\lec S, \omega\in \B(\Hil_U)_*^+,\|\omega\|\le 1\}$;

\item[(5)] $\{a\in \mrm{C}_b(\GG)\,|\,
a \textnormal{ is a strict limit of a net of positive definite functions}$

\hspace{3cm}$\textnormal{of the form } (\id\otimes \omega_\xi) (U^*)\textnormal{ where } \, U\in S, \xi\in \Hil_U, \|\xi\|\le 1\}$; 
\item[(6)] $\{a\in \mrm{C}_b(\GG)\,|\,
a \textnormal{ is a strict limit of a net of positive definite functions}$

\hspace{3cm}$\textnormal{of the form } (\id\otimes \omega) (U^*)\textnormal{ where } \, U\lec S, \omega\in \B(\msf{H}_U)_*^+, \|\omega\|\le 1\}$. 
\end{enumerate}
\end{prop}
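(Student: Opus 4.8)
The plan is to establish a cycle of inclusions among the six sets, passing through the most concrete description $(2)$ as the hub. First I would note that the set in $(2)$ is convex: given $U,V\lec S$ and unit vectors $\xi\in\Hil_U$, $\eta\in\Hil_V$, and $t\in[0,1]$, the vector $\sqrt{t}\,\xi\oplus\sqrt{1-t}\,\eta$ in $\Hil_{U\oplus V}$ has norm $\le 1$ and $(\id\otimes\omega_{\sqrt t\xi\oplus\sqrt{1-t}\eta})((U\oplus V)^*) = t(\id\otimes\omega_\xi)(U^*)+(1-t)(\id\otimes\omega_\eta)(V^*)$, using that $S$ (hence its weak closure) is closed under finite direct sums. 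The equality $(1)=(2)$ is the analytic core: if $a$ is positive-definite in $B_S(\QG)$ with $\|a\|_{B_S(\QG)}\le 1$, then by Lemma \ref{lem:dual} the corresponding functional $\mu$ on $\C_0^S(\hQG)$ is positive of norm $\le 1$; performing its GNS construction yields a representation $\pi$ of $\C_0^S(\hQG)$ and a vector $\xi$ with $\|\xi\|^2=\|\mu\|\le1$ so that $\mu=\omega_\xi\circ\pi$, and composing with the quotient map $q_S$ gives $\pi\circ q_S=\phi_V$ for some $V\lec S$ exactly as in the proof of Lemma \ref{lem:dual}; then $a=(\id\otimes\omega_\xi)(V^*)$. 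The reverse inclusion $(2)\subseteq(1)$ is immediate since for $U\lec S$ and $\|\xi\|\le 1$, the functional $\omega_\xi\circ\phi_U$ is positive with norm $\le\|\xi\|^2\le 1$, and it factors through $\C_0^S(\hQG)$ by definition of $J_S$.

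Next I would handle the approximation descriptions. The inclusions $(3)\subseteq(4)$ and $(5)\subseteq(6)$ are trivial (a vector state $\omega_\xi$ is a positive normal functional). For $(4)\subseteq(1)$: each approximant $(\id\otimes\omega)(U^*)$ with $U\lec S$, $\omega\in\B(\Hil_U)_*^+$, $\|\omega\|\le 1$ is positive-definite with $B_S$-norm $\le\|\omega\|\le1$ by the analogue of the computation above (decompose $\omega$ as a sum/integral of vector functionals, or just use that $\omega\circ\phi_U$ is a positive functional of norm $\le1$ on $\C_0^S(\hQG)$); since $(1)$ is convex and weak$^*$-closed — it is the positive part of the unit ball of a dual space, by Lemma \ref{lem:dual} — the weak$^*$-limit stays in $(1)$. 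The same argument gives $(6)\subseteq(1)$ once one observes that strict convergence in $\C_b(\QG)=\M(\C_0(\QG))$ implies weak$^*$-convergence in $\LL^\infty(\QG)$, and that the strict limit of positive-definite functions with $B_S$-norm $\le 1$ again lies in $B_S(\QG)$ with norm $\le1$ (here one uses that such approximants are bounded in $\C_b(\QG)$-norm by Lemma \ref{lem:inclusions}, so the limit is a bounded element on which the relevant functional argument applies). It remains to prove $(1)\subseteq(3)$ and $(1)\subseteq(5)$, i.e. that every element of $(2)$ can be approximated, in the weak$^*$ and strict topologies respectively, by the special approximants $(\id\otimes\omega_\xi)(U^*)$ with $U$ genuinely \emph{in} $S$ (not merely weakly contained).

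This last point is where the real work lies and will be the main obstacle. Given $a=(\id\otimes\omega_\xi)(V^*)$ with $V\lec S$, the functional $\mu=\omega_\xi\circ\phi_V$ on $\C_0^S(\hQG)$ can, since $\C_0^S(\hQG)$ is by construction the completion of $\LL_1^\sharp(\QG)$ (or of $\Pol(\hQG)$ in the discrete case) in the norm $\|f\|_S=\sup_{U\in S}\|\phi_U(f)\|$, be approximated weak$^*$ by convex combinations of functionals of the form $\omega_\xi\circ\phi_U$ with $U\in S$: concretely, since the states of $\C_0^S(\hQG)$ supported on single $\phi_U$, $U\in S$, together with the closure-under-direct-sums of $S$, have weak$^*$-closed convex hull containing all states (otherwise Hahn–Banach would separate some $f\in\LL_1^\sharp(\QG)$, contradicting the definition of $\|\cdot\|_S$), and GNS of such a convex combination gives a vector functional of $\bigoplus U_k$, which lies in $S$. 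Transporting this back through the bijection of Lemma \ref{lem:dual}, and noting that on bounded sets the weak$^*$-topology on $B_S(\QG)$ restricts to the weak$^*$-topology inherited from $\LL^\infty(\QG)$ (as the relevant functionals include $\LL^1(\QG)$), gives $(1)\subseteq(3)$. For $(1)\subseteq(5)$ one additionally invokes that on norm-bounded subsets of $\C_b(\QG)=\M(\C_0(\QG))$ the strict topology and the weak$^*$-topology of $\LL^\infty(\QG)$ agree on nets converging to an element of $\C_b(\QG)$ — more carefully, one refines the approximating net so that convergence holds after multiplying by elements of $\C_0(\QG)$, using that positive-definite functions act nicely and that $a$ itself lies in $\C_b(\QG)$. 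Assembling $(2)\subseteq(3)\subseteq(4)\subseteq(1)=(2)$ and $(2)\subseteq(5)\subseteq(6)\subseteq(1)=(2)$ closes both cycles and proves all six sets coincide.
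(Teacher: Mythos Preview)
Your overall architecture matches the paper's: $(1)=(2)$ via GNS, the trivial inclusions $(3)\subseteq(4)$ and $(5)\subseteq(6)$, weak$^*$-closedness of $(1)$ for $(4)\subseteq(1)$, and an approximation step $(2)\subseteq(3)$. Your Hahn--Banach approach to $(2)\subseteq(3)$ is a legitimate alternative to the paper's direct appeal to Fell's characterisation of weak containment.

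There is, however, a genuine gap at the passage to strict limits. Your claim that ``on norm-bounded subsets of $\mrm{C}_b(\GG)=\M(\mrm{C}_0(\GG))$ the strict topology and the weak$^*$-topology of $\LL^\infty(\GG)$ agree on nets converging to an element of $\mrm{C}_b(\GG)$'' is false as a general statement about topologies: already for a unital $\mrm{C}^*$-algebra the strict topology is the norm topology, and weak$^*$ convergence in $\LL^\infty$ certainly does not force norm convergence. What \emph{is} true --- and what the paper invokes --- is a quantum Raikov-type theorem: if a net of positive functionals converges weak$^*$ in $\mrm{C}_0^u(\whG)^*$, then the corresponding positive-definite functions converge strictly in $\mrm{C}_b(\GG)$. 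This is \cite[Theorem~4.6]{AmiVolker}, and it is the non-trivial analytic ingredient carrying $(3)$ to $(5)$ (and $(4)$ to $(6)$). Your phrase ``positive-definite functions act nicely'' is precisely where this theorem is hiding; without it the step $(1)\subseteq(5)$ does not go through.

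A secondary gap: in $(6)\subseteq(1)$ you assert that the strict limit ``again lies in $B_S(\GG)$ with norm $\le 1$'', but a priori the limit $a$ is only known to sit in $\mrm{C}_b(\GG)$, and strict convergence gives no direct information about $B_S$-membership. The paper closes this by passing to a weak$^*$ cluster point $\omega$ of the associated functionals $\omega_i\circ\phi_{U_i}$ in $\mrm{C}_0^u(\whG)^*$, using \cite[Proposition~3.4.9]{DixmierC} to see that the GNS representation $\pi_\omega$ is weakly contained in $S$, and then verifying $a=(\id\otimes\omega)(\wW^*)$ by testing against functionals of the form $\rho c$ with $\rho\in\LL^1(\GG)$, $c\in\mrm{C}_0(\GG)$. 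Your ``relevant functional argument'' needs to be exactly this.
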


\begin{proof}
Clearly $(1)$ is convex, hence it is enough to argue the equality of sets $(1)$ -- $(6)$. Note first that $(2)$ is contained in $(1)$, $(3)$ in $(4)$ and $(5)$ in $(6)$.

We will now prove that the sets in $(1)$ and $(2)$ are equal. Let $a\in B_S(\QG)$ be a positive definite function. This means that there is $U\lec S,\xi,\eta\in \Hil_U$ and $\omega\in \C^u_0(\whG)^*_+$ such that $a=(\id\otimes \omega)(\wW^*)=(\id\otimes \omega_{\xi,\eta})(U^*)$ and $\|\omega\|\le 1$. Since $(\id\otimes \omega_{\xi,\eta})(U^*)=(\id\otimes \omega_{\xi,\eta}\circ \phi_U)(\wW^*)$, and left slices of $\wW^*$ are dense in $\C^u_0(\whG)$, we have $\omega_{\xi,\eta}\circ\phi_U = \omega$. Let $(\pi_\omega,\zeta_\omega,\Hil_\omega)$ be the GNS representation for $\omega$, in particular $\|\zeta_\omega\|\le 1$ and $\omega=\omega_{\zeta_\omega}\circ\pi_\omega$ holds on $\mrm{C}_0^u(\whG)$. The representation $\pi_\omega$ is weakly contained in $S$ by \cite[Proposition 3.4.9]{DixmierC}. This shows that the sets in $(1)$ and $(2)$ are equal.

Next we show that any member $(\id\otimes \omega_\xi)(U^*)$ of $(2)$ belongs to $(3)$. We assume that $\phi_U\lec S$, where $\phi_U\colon\mrm{C}_0^u(\whG)\rightarrow \B(\Hil_U)$ is the representation associated to $U$. By \cite[Theorem 1.2]{Fell} we can find a collection of vectors $\xi_{i,j}\in \Hil_{U_{i,j}}$ with $U_{i,j}\in S\,(i\in I, j\in \{1,\dotsc, N_i\})$ such that
\begin{equation}\label{eq4}
\bigl\| \sum_{j=1}^{N_i} \omega_{\xi_{i,j}}\circ \phi_{U_{i,j}}\bigr\|_{\mrm{C}_0^u(\hQG)^*}\le \|\omega_{\xi}\circ\phi_U\|_{\mrm{C}_0^u(\hQG)^*},\quad
\omega_\xi\circ\phi_U=w^*-\lim_{i\in I} \sum_{j=1}^{N_i} \omega_{\xi_{i,j}}\circ \phi_{U_{i,j}}.
\end{equation}
After rescaling, we can assume equality in \eqref{eq4} for each $i \in I$.
The net
\begin{equation}
\bigl(\sum_{j=1}^{N_i} (\id\otimes \omega_{\xi_{i,j}})(U_{i,j}^*)
\bigr)_{i\in I}
\end{equation}
is then a bounded net of normalised positive definite functions in $B_S(\GG)$ which by \eqref{eq4} converges to $(\id\otimes \omega_\xi)(U^*)$ in $(B_S(\GG),w^*)$. Since $S$ is closed under finite direct sums, we can consider $U_i=\bigoplus_{j=1}^{N_i}U_{i,j}\in S$ and $\zeta_i=\bigoplus_{j=1}^{N_i}\xi_{i,j}\in  \Hil_{U_{i}}$, so that  $\sum_{j=1}^{N_i} \omega_{\xi_{i,j}} \circ \phi_{U_{i,j}}=\omega_{\zeta_i}\circ \phi_{U_i}$. Furthermore for each $i \in I$ we have
\[
1\ge \|\xi\|_{\Hil_U}=\|\omega_\xi\circ \phi_U\|_{\mrm{C}_0^u(\whG)^*}= 
\bigl\|\sum_{j=1}^{N_i} \omega_{\xi_{i,j}}\circ \phi_{U_{i,j}}
\bigr\|_{\mrm{C}_0^u(\whG)^*}= 
\sum_{j=1}^{N_i}\|\xi_{i,j}\|^2= 
\| \zeta_i\|^2,
\]
which ends the claim.

To see that $(3)$ is contained in $(5)$, take $a\in B_S(\GG)$ which is a weak$^*$ limit of a normalised net $\bigl((\id\otimes\omega_{\xi_i})(U_i^*)\bigr)_{i\in I}$ in $B_S(\GG)$, where $U_i\in S, \xi_i\in \Hil_{U_i}$, $\|\xi_i\|\le 1$.  That means that we can find $U\lec S, \xi,\eta\in \Hil_U$ so that $a=(\id\otimes\omega_{\xi,\eta})(U^*)$ and $\omega_{\xi_i}\circ\phi_{U_i}\xrightarrow[i\in I]{} \omega_{\xi,\eta}\circ\phi_U$ in the weak$^*$ topology of $\mrm{C}_0^u(\whG)^*$. By \cite[Theorem 4.6]{AmiVolker} we deduce that $\bigl( (\id\otimes \omega_{\xi_i})(U_i^*)\bigr)_{i\in I}$ converges strictly to $a$. An analogous argument shows that $(4)$ is contained in $(6)$.

It remains to show that the set in $(6)$ is contained in $(1)$. Take $a\in \mrm{C}_b(\GG)$ which is a strict limit of a net $\bigl((\id\otimes\omega_i)(U_i^*)\bigr)_{i\in I}$, where $U_i\lec S$ and $\|\omega_i\|\le 1$. We want to show that $a$ is a positive definite function in $ B_S(\QG)$ with $\|a\|_{B_S(\QG)}\le 1$. Consider the net of contractive functionals $(\omega_i\circ \phi_{U_i})_{i\in I}$ on $\mrm{C}_0^u(\whG)$. After passing to a subnet we can assume that there is $\omega\in \mrm{C}_0^u(\whG)^*$ such that $\omega_i\circ\phi_{U_i}\xrightarrow[i\in I]{}\omega$ weak$^*$. Clearly $\omega$ is positive and $\|\omega\|\le 1$. Let $(\pi_\omega, \zeta_\omega,\Hil_\omega)$ be the GNS representation for $\omega$. We claim that $\pi_\omega\lec S$ and $a=(\id\otimes \omega_{\zeta_\omega}\circ\pi_\omega )(\wW^*)$, which will end the proof. The first property follows again by \cite[Proposition 3.4.9]{DixmierC}. Denote $b=(\id\otimes \omega_{\zeta_\omega}\circ\pi_\omega )(\wW^*)\in \mrm{C}_b(\GG)$ and take $\rho\in \LL^1(\GG)\subset \mrm{C}_0(\GG)^*, c\in \mrm{C}_0(\GG)$. We have
\[\begin{split}
\la \rho c , a\ra &=\la \rho , ca \ra = 
\lim_{i\in I} \la \rho, c (\id\otimes \omega_{i})(U_i^*)\ra =
\lim_{i\in I}\la \omega_i\circ\phi_{U_i},  (\rho c \otimes \id)(\wW^*) \ra \\
&=
\la \omega_{\zeta_\omega}\circ \pi_\omega , (\rho c\otimes \id)(\wW^*)\ra=
\la \rho c , b\ra .
\end{split}\]

Note that above we used the fact that $\rho c \in \LL^1(\GG)$ (as the left multiplication by $c$ is a normal map on $\LL^{\infty}(\GG)$) and the set of functionals of the form $\rho c$ is norm dense in $\LL^1(\GG)$. In fact, by Cohen-Hewitt factorization theorem, any functional in $\LL^1(\GG)$ can be written as $\rho c$ for some $\rho\in \LL^1(\GG),c\in \mrm{C}_0(\GG)$, see \cite[Proposition 2.1]{HuNeufangRuan}. 
\end{proof}

\begin{cor}\label{cor:limits}
Let $\GG$ be a locally compact quantum group and $V\subset \LL^2(\GG)$ a dense subspace. The following convex sets are equal:
\begin{enumerate}

\item the set of positive definite functions $a$ in $B_\lambda(\QG)$ with $\|a\|_{B_\lambda(\GG)}\le 1$;
\item $\{ a\in B_\lambda(\GG)\,|\, a\,\textnormal{ is a weak}^*\textnormal{ limit of a net of positive definite functions}$

\hspace{3cm}$\textnormal{of the form } (\id\otimes \omega_{\xi}) (\ww^*) \textnormal{ where } \, \xi\in V, \|\xi\|\le 1\}$;
\item $\{a\in \mrm{C}_b(\GG)\,|\,
a \textnormal{ is a strict limit of a net of positive definite functions}$

\hspace{3cm}$\textnormal{of the form } (\id\otimes \omega_{\xi}) (\ww^*)\textnormal{ where } \, \xi\in V, \|\xi\|\le 1\}$.

\end{enumerate}
\end{cor}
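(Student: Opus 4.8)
The statement is Corollary \ref{cor:limits}, which is just Proposition \ref{prop:approxpdf} specialized to $S = \{\ww\}$, with the added freedom of replacing general vectors in $\Ltwo(\GG)$ by vectors from a prescribed dense subspace $V$. So the first thing I would do is observe that $S = \{\ww\}$ \emph{is} closed under finite direct sums, because $\ww$ is unitarily equivalent to $\ww \oplus \ww$ (indeed to any finite, even countable, amplification of itself). Hence Proposition \ref{prop:approxpdf} directly gives that the set in (1) here coincides with all six sets there; in particular it equals set (3) of Proposition \ref{prop:approxpdf}, which (with $S = \{\ww\}$, so the only representation in $S$ is $\ww$ itself) is exactly the set of weak$^*$-limits of nets of normalised positive-definite functions of the form $(\id \otimes \omega_\xi)(\ww^*)$ with $\xi \in \Ltwo(\GG)$, $\|\xi\| \le 1$, and similarly (5) gives the strict-limit version. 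So \emph{modulo} the subspace refinement, the corollary is immediate.

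**The subspace refinement.** What remains is to upgrade "$\xi \in \Ltwo(\GG)$" to "$\xi \in V$" for an arbitrary dense subspace $V$. This is where the small amount of actual work lies. The point is that for $\xi, \xi' \in \Ltwo(\GG)$ one has a norm estimate of the form
\[
\|(\id \otimes \omega_\xi)(\ww^*) - (\id \otimes \omega_{\xi'})(\ww^*)\|_{\C_b(\GG)} \le \|\omega_\xi - \omega_{\xi'}\|_{\Ltwo(\GG)_*} \le (\|\xi\| + \|\xi'\|)\,\|\xi - \xi'\|,
\]
using $\|(\id\otimes\mu)(\ww^*)\|_{\C_b(\GG)} \le \|\mu\|$ (as in Lemma \ref{lem:inclusions}, or formula \eqref{inversemap}) and the standard bound on vector functionals. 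Given any $\xi \in \Ltwo(\GG)$ with $\|\xi\| \le 1$, density of $V$ lets us pick $\xi_n \in V$ with $\xi_n \to \xi$; after rescaling (replacing $\xi_n$ by $\|\xi\|\,\xi_n/\|\xi_n\|$, say, or by $\xi_n/\max(1,\|\xi_n\|)$) we may assume $\|\xi_n\| \le 1$. Then $(\id\otimes\omega_{\xi_n})(\ww^*) \to (\id\otimes\omega_\xi)(\ww^*)$ in norm, hence a fortiori strictly and in the weak$^*$ topology. Since a weak$^*$-limit (resp. strict limit) of weak$^*$-limits (resp. strict limits) is again of the same type — one iterates limits / passes to a product directed set in the usual way — any net from the $\Ltwo(\GG)$-version can be replaced by a net from the $V$-version with the same limit. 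This shows the set in (1) of Proposition \ref{prop:approxpdf} (with $S=\{\ww\}$) is contained in the sets (2) and (3) of the corollary; the reverse inclusions are trivial since $V \subset \Ltwo(\GG)$ and every member of (2) or (3) is visibly in (1) by Proposition \ref{prop:approxpdf}.

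**Assembling.** Concretely I would write: (i) note $\{\ww\}$ is closed under finite direct sums, so Proposition \ref{prop:approxpdf} applies and identifies the set (1) of the corollary with sets (2)--(6) there for $S=\{\ww\}$; in particular $B_\lambda(\GG)_{\le 1}^{+,\mathrm{pd}}$ equals the weak$^*$-closure (resp. strict-closure) of $\{(\id\otimes\omega_\xi)(\ww^*) : \xi \in \Ltwo(\GG), \|\xi\|\le 1\}$; (ii) record the norm estimate above and the rescaling trick to replace $\Ltwo(\GG)$-vectors by $V$-vectors up to arbitrarily small norm error, hence up to weak$^*$- and strict-convergence; (iii) conclude the three sets in the corollary coincide, noting convexity is inherited from Proposition \ref{prop:approxpdf}(1).

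**Main obstacle.** There is no serious obstacle — everything reduces to Proposition \ref{prop:approxpdf}. The one point requiring a little care is the bookkeeping in step (ii): making precise that "a net of (weak$^*$- or strict-) limits of elements, each of which is itself such a limit, is again such a limit". The clean way is to pass to the iterated directed set $I \times \prod_{i} J_i$ (or simply invoke that the relevant closures are topological closures, so closure-of-closure equals closure) rather than manipulating nets by hand; I would phrase it via "weak$^*$-closure" and "strict closure" to sidestep this entirely.
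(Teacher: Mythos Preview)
Your proof is correct and follows exactly the paper's approach: the paper's proof is the single sentence ``follows directly from Proposition \ref{prop:approxpdf} for $S=\{\ww\}$ and density of $V$ in $\LL^2(\GG)$,'' and you have simply unpacked both ingredients (noting, as the paper does just before Proposition \ref{prop:approxpdf}, that $\ww$ is equivalent to its own multiples so $\{\ww\}$ is effectively closed under direct sums, and then spelling out the routine density/rescaling argument). There is nothing to add or correct.
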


\begin{proof}
The above result follows directly from Proposition \ref{prop:approxpdf} for $S=\{\ww\}$ and density of $V$ in $\LL^2(\GG)$.
\end{proof}

The next result should be compared to Proposition \ref{prop:A(G)}.

\begin{prop}\label{prop1}
	Let $\QG$ be an algebraic quantum group. 	Then $B_\lambda(\QG)\subset \mrm{C}_b(\QG)$ coincides with the linear span of strict limits of  normalised  positive definite functions in $\mathfrak{C}^{\infty}_c(\GG)$.
\end{prop}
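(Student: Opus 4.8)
The plan is to combine Proposition~\ref{prop1}'s `building blocks' description of the convex set of contractive positive-definite functions in $B_\lambda(\GG)$ (Corollary~\ref{cor:limits}) with the identification of $\A(\QG)$ inside $B_\lambda(\QG)$ coming from Proposition~\ref{prop:A(G)} and Lemma~\ref{lem:inclusions}. First I would note that every normalised positive-definite function in $\mathfrak{C}^{\infty}_c(\GG)$ lies in $\A(\QG)\subset B_\lambda(\QG)$, by part (1) of Proposition~\ref{prop:A(G)}, and is in particular a positive-definite function of norm $\le 1$ in $B_\lambda(\GG)$ (using the isometric inclusion $\A(\QG)\subset B_\lambda(\QG)$ from Lemma~\ref{lem:inclusions}). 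Since the set of contractive positive-definite functions in $B_\lambda(\GG)$ equals its own set of strict limits (equality of sets (1) and (3) in Corollary~\ref{cor:limits}, or rather the analogous closedness statement), strict limits of normalised positive-definite functions in $\mathfrak{C}^{\infty}_c(\GG)$ are again contractive positive-definite functions in $B_\lambda(\GG)$; taking linear spans, one inclusion `$\supseteq$' follows immediately.

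For the reverse inclusion, take a positive-definite $a\in B_\lambda(\QG)$ with $\|a\|_{B_\lambda(\GG)}\le 1$; it suffices to exhibit $a$ as a strict limit of normalised positive-definite functions in $\mathfrak{C}^{\infty}_c(\GG)$, since $B_\lambda(\QG)$ is spanned by such $a$ (every element of $B_\lambda(\GG)$ is a linear combination of positive-definite ones, via polarisation of the defining functional on $\mrm{C}_0^S(\hQG)$ with $S=\{\ww\}$). By Corollary~\ref{cor:limits} applied with the dense subspace $V=\Lambda_\vp(\mathfrak{C}^{\infty}_c(\GG))\subset \LL^2(\GG)$ (dense by the remark in the proof of Proposition~\ref{prop:A(G)}), $a$ is a strict limit of functions of the form $(\id\otimes\omega_\xi)(\ww^*)$ with $\xi\in \Lambda_\vp(\mathfrak{C}^{\infty}_c(\GG))$, $\|\xi\|\le 1$. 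So write $\xi=\Lambda_\vp(b)$ with $b\in \mathfrak{C}^{\infty}_c(\GG)$; by formula~\eqref{KVDformula} we get
\[
(\id\otimes\omega_{\Lambda_\vp(b)})(\ww^*)=(\id\otimes\varphi)\bigl((\I\otimes b^*)\Delta(b)\bigr),
\]
which lies in $\mathfrak{C}^{\infty}_c(\GG)$ by the algebraic-quantum-group axiom quoted in the proof of Proposition~\ref{prop:A(G)}. Moreover this is a positive-definite function, being a coefficient $(\id\otimes\omega_\xi)(\ww^*)$ of the (left regular) representation with a vector state, hence normalised up to the scalar $\|\xi\|^2\le 1$ — and rescaling $b$ so that $\|\Lambda_\vp(b)\|=1$ (harmless, as we may discard the terms with $\xi=0$) makes it genuinely normalised. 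Thus each approximating function may be taken to be a normalised positive-definite element of $\mathfrak{C}^{\infty}_c(\GG)$, giving `$\subseteq$' and hence equality.

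I expect the only real subtlety to be bookkeeping around normalisation and the passage from $\|\xi\|\le 1$ to $\|\xi\|=1$: one has to make sure that replacing $\xi$ by $\xi/\|\xi\|$ still yields a net converging strictly to $a$, and handle the $\xi=0$ case — but this is routine since strict convergence is preserved under multiplying a subnet by scalars tending to $1$, and the zero functions can simply be deleted from the net. A second point worth stating carefully is why polarisation stays inside $B_\lambda(\GG)$: the four functionals obtained from $\omega=\omega_{\zeta,\zeta}$-type decompositions of a general coefficient functional on $\C_0^\lambda(\hQG)=\CStarR(\hQG)$ are again positive functionals descending from $\mrm{C}_0^u(\hQG)$ through $J_{\{\ww\}}$, hence their associated functions lie in $B_\lambda(\GG)$ — this is exactly the same mechanism as in the proof of Lemma~\ref{lem:dual}. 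The core analytic input, namely that strict limits do not take us outside the contractive positive-definite functions in $B_\lambda(\GG)$, is already packaged in Corollary~\ref{cor:limits}, so no new hard estimate is needed.
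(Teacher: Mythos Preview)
Your proof is correct and follows the paper's approach exactly: apply Corollary~\ref{cor:limits} with $V=\Lambda_\vp(\mathfrak{C}^{\infty}_c(\GG))$ and use formula~\eqref{KVDformula} to recognise the approximants $(\id\otimes\omega_{\Lambda_\vp(b)})(\ww^*)$ as elements of $\mathfrak{C}^{\infty}_c(\GG)$. One refinement worth making explicit: for your rescaling step you need $\|\xi_i\|\to 1$, which is only guaranteed once you take $a$ \emph{normalised} rather than merely $\|a\|_{B_\lambda(\GG)}\le 1$ (then weak$^*$-lower semicontinuity of the norm on $\mrm{C}_0(\hQG)^*_+$ forces $\|\omega_{\xi_i}\|\to 1$) --- the paper sidesteps this by phrasing the reduction in terms of ``norm $\le 1$'' rather than ``normalised'', which yields the same linear span.
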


\begin{proof}
It suffices to show that positive-definite functions in $B_\lambda(\QG)$ of norm not greater than $1$ are precisely strict limits of  positive definite functions in $\mathfrak{C}^{\infty}_c(\GG)$ of norm not greater then $1$. This follows from Corollary \ref{cor:limits} applied to $V=\Lambda_\vp(\mathfrak{C}^{\infty}_c(\GG))$ and from \cite[Lemma 2.3]{KVD}, arguing again as in the proof of Proposition \ref{prop:A(G)}.
\end{proof}

\begin{rem}
	It is worth noting that	in spite of Proposition \ref{prop1} the space $B_\lambda(\QG)$ need not be closed under strict limits; indeed, this is not the case whenever $\QG$ is a non-amenable discrete group (it suffices to consider any net of finitely supported functions converging pointwise to $\I$).
\end{rem}

The next theorem is our main result in this section. We show that, roughly speaking, if $\whG$ is not coamenable, one cannot find a net of normalised positive-definite functions in $B_\lambda(\GG)$ which would be ``eventually separated from $0$''. To make this statement precise, we use an auxiliary strictly dense subspace $A\subset \mrm{C}_b(\GG)$.

\begin{tw}\label{thm:Godementnv}
Let $\GG$ be a locally compact quantum group, $\emptyset\neq S$ a subset of $\Rep(\QG)$ and $A\subset \mrm{C}_b(\GG)$ a strictly dense subspace. If  there exists $\eps>0$ and a net $(f_i)_{i\in I}$ of normalised positive definite functions in $B_S(\GG)$ such that
\begin{equation}\label{eq5}
\forall_{a\in A} \exists_{i_0\in I}\forall_{i\ge i_0}\quad
a^* f_i a \ge \eps a^* a\;\textnormal{ in }\mrm{C}_b(\GG),
\end{equation}
then the Godement mean does not vanish on $B_S(\QG)$.
\end{tw}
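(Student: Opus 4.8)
The plan is to use the net $(f_i)_{i\in I}$ to manufacture, via Proposition \ref{prop:approxpdf}, associated vectors with a uniform lower bound on a dense set of "test slices", and then to extract a limit which will be an invariant vector for some representation weakly contained in $S$; Proposition \ref{Godementmean} (or directly Proposition \ref{prop2}) then forces the Godement mean to be nonzero on $B_S(\GG)$. More precisely, without loss of generality I would first arrange $S$ to be closed under finite direct sums (replacing $S$ by the set of all finite direct sums of members of $S$ does not change $B_S(\GG)$ nor $J_S$, by the remark preceding Lemma \ref{lem:dual}). Then by Proposition \ref{prop:approxpdf}, each $f_i$ equals $(\id\otimes\omega_{\xi_i})(U_i^*)$ for some $U_i\lec S$ and a unit vector $\xi_i\in\Hil_{U_i}$, with $\omega_{\xi_i}\circ\phi_{U_i}$ a state on $\mathrm{C}_0^S(\hQG)$.

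The key computation is to translate the hypothesis \eqref{eq5} into a statement about these vectors. For $a\in A$ and $\rho\in\LL^1(\GG)$ positive, the inequality $a^* f_i a\ge\eps\, a^*a$ in $\mathrm{C}_b(\GG)$ gives, after applying $\rho$ and rewriting as in the proof of Proposition \ref{prop:approxpdf}, an inequality of the form $\la\omega_{\xi_i}\circ\phi_{U_i}, (\rho'\otimes\id)(\wW^*)\ra \ge \eps\,\la\I,\rho'\ra$ for $\rho'=a^*\rho a\in\LL^1(\GG)$ (once $i$ is large enough, depending on $a$). Passing to the universal $\mathrm{C}^*$-algebra, and using that $(\rho'\otimes\id)(\wW^*)$ ranges over a strictly dense, hence norm-dense-in-the-relevant-sense, subset of $\mathrm{C}_0^u(\hQG)$, I would reformulate this as: the states $\mu_i:=\omega_{\xi_i}\circ\phi_{U_i}$ on $\mathrm{C}_0^S(\hQG)$ are "eventually $\eps$-close to being the counit/trivial representation" when tested against a norm-dense cone of positive elements coming from $A$. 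The cleanest way to package this is: for each fixed $a$ (equivalently each fixed element of a dense subspace), $\liminf_i \mu_i(x_a)\ge \eps\, \chi(x_a)$ where $\chi$ is the character of $\mathrm{C}_0^u(\hQG)$ corresponding to the trivial representation restricted appropriately, $x_a$ the relevant slice.

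Next I would take a weak$^*$ cluster point $\mu$ of $(\mu_i)_{i\in I}$ in the unit ball of $\mathrm{C}_0^S(\hQG)^*$ (nonempty by Banach–Alaoglu); $\mu$ is positive and, by the $\liminf$ bound above applied along a convergent subnet, satisfies $\mu(x_a)\ge\eps\,\chi(x_a)$ for all $a$ in the dense set, hence $\mu\ge\eps\chi$ as positive functionals on $\mathrm{C}_0^S(\hQG)$ (using that the $x_a$ are dense enough to separate). Here $\chi$ is a state, so $\eps\chi\le\mu$ with $\eps>0$ and $\chi\ne 0$; since $\mathrm{C}_0^S(\hQG)$ is a quotient of $\mathrm{C}_0^u(\hQG)$ on which $\chi$ (the trivial representation) does not necessarily factor a priori — this is exactly the point to check — one argues: $\eps\chi\le\mu$ and $\mu$ factors through $\mathrm{C}_0^S(\hQG)$, hence so does $\chi$ (a positive functional dominated by one that kills $J_S$ also kills $J_S$, by Cauchy–Schwarz). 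Therefore the GNS representation of $\chi$, namely the trivial representation of $\GG$, is weakly contained in $S$, i.e. $\I\in B_S(\GG)$, so $M(\I)=1\ne 0$ and the Godement mean does not vanish on $B_S(\GG)$. By Proposition \ref{prop2} this is equivalent to $S$ weakly containing the trivial representation, which is what we have just shown.

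The main obstacle I anticipate is the bookkeeping in the second step: carefully passing from the operator inequality \eqref{eq5} in $\mathrm{C}_b(\GG)$ to a clean inequality between positive functionals on $\mathrm{C}_0^S(\hQG)$, while keeping track of which elements $(\rho'\otimes\id)(\wW^*)$ one actually obtains (one needs $\rho'=a^*\rho a$ with $a\in A$ and $\rho$ ranging over enough of $\LL^1(\GG)_+$ so that these span a dense subspace on which to test positivity), and making sure the $\liminf$ bound survives the passage to a weak$^*$ cluster point. The formula $\la\rho c,a\ra=\la\omega\circ\phi_U,(\rho c\otimes\id)(\wW^*)\ra$ from the proof of Proposition \ref{prop:approxpdf}, together with strict density of $A$ and density of slices of $\wW^*$ in $\mathrm{C}_0^u(\hQG)$, is the technical engine that makes this work.
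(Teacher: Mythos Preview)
Your overall strategy coincides with the paper's: write each $f_i$ as $(\id\otimes\mu_i)(\wW^*)$ for a state $\mu_i\in\mathrm{C}_0^u(\hQG)^*$ with GNS representation weakly contained in $S$, extract a weak$^*$ cluster point $\mu$ (whose GNS representation is again weakly contained in $S$ by \cite[Proposition 3.4.9]{DixmierC}), and then translate the hypothesis via the slicing identity $\langle a\nu a^*,f_i\rangle=\mu_i\bigl((a\nu a^*\otimes\id)(\wW^*)\bigr)$. Up to this point you and the paper are doing the same thing; the detour through Proposition~\ref{prop:approxpdf} and the reduction to $S$ closed under direct sums are unnecessary but harmless.

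The gap is in your endgame. From the limiting inequality you obtain $\mu(x_{a,\nu})\ge\eps\,\chi(x_{a,\nu})$ for the specific elements $x_{a,\nu}=(a\nu a^*\otimes\id)(\wW^*)$, and you then assert ``hence $\mu\ge\eps\chi$ as positive functionals, using that the $x_{a,\nu}$ are dense enough to separate.'' This inference is not valid: the $x_{a,\nu}$ are \emph{not} positive elements of $\mathrm{C}_0^u(\hQG)$, and an inequality of the form $\mu(x)\ge\eps\chi(x)$ for $x$ ranging over a linearly dense set says nothing about $\mu-\eps\chi$ being a positive functional (which would require $(\mu-\eps\chi)(y^*y)\ge 0$ for all $y$). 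There is no Cauchy--Schwarz route here because you do not have $\eps\chi\le\mu$ to begin with.

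The paper's endgame avoids this entirely and is simpler. Setting $b=(\id\otimes\mu)(\wW^*)\in B_S(\GG)$, the very inequality you have computed reads $\langle\nu,a^*ba\rangle\ge\eps\langle\nu,a^*a\rangle$ for all $\nu\in\LL^1(\GG)^+$ and $a\in A$; since positive normal functionals separate, this gives $a^*ba\ge\eps\,a^*a$ in $\mathrm{C}_b(\GG)$ for every $a\in A$, and strict density of $A$ then yields $b\ge\eps\I$. Now one applies the Godement mean directly: $M(b)\ge\eps M(\I)=\eps>0$. There is no need to show that the trivial representation is weakly contained in $S$, nor to compare $\mu$ and $\chi$ as functionals --- the conclusion $M|_{B_S(\GG)}\not\equiv 0$ follows immediately from $b\in B_S(\GG)$ and $M(b)\ge\eps$. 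Replace your final paragraph with this argument and the proof is complete.
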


\begin{proof}
Assume that there is such $\eps>0$ and a net $(f_i)_{i\in I}$. For each $i\in I$, write $f_i=(\id\otimes \omega_i\circ \pi_i) (\wW^*)$ for a state $\omega_i\in \B(\Hil_{i})_*$ and a representation $\pi_i$ weakly contained in $S$. Since $f_i$ are normalised, we can pass to a subnet $(\omega_j\circ\pi_j)_{j\in J}$ in $\mrm{C}_0^u(\whG)^*$ which converges weak$^*$ to a positive functional $\omega\in \mrm{C}_0^u(\whG)^*$. Let $(\zeta_\omega,\pi_\omega,\Hil_\omega)$ be the GNS representation for $\omega$. As in the proof of Proposition \ref{prop:approxpdf} we use \cite[Proposition 3.4.9]{DixmierC} to deduce that since all $\pi_j$ are weakly contained in $S$, the same is true for $\pi_\omega$. Consequently $b=(\id\otimes\omega)(\wW^*)\in B_S(\GG)$. Choose $a\in A$ and $\nu\in \LL^1(\GG)^+$. We have
\[\begin{split}
\la \nu , a^* b a \ra & =\la a\nu a^* , b \ra =
\la \omega , (a \nu a^*  \otimes \id)(\wW^*)\ra =
\lim_{j\in J} 
\la \omega_j\circ \pi_j , (a \nu a^*  \otimes \id)(\wW^*)\ra\\
&=
\lim_{j\in J} 
\la a \nu a^* , f_j\ra=
\lim_{j\in J} \la \nu , a^* f_j a \ra \ge 
\liminf_{j\in J} \la \nu , \eps a^* a \ra =
\la \nu , \eps a^* a \ra .
\end{split}\]
Since $\nu\ge 0$ is arbitrary, we obtain $a^* b a \ge \eps a^* a $ in $\LL^{\infty}(\GG)$, hence in $\mrm{C}_b(\GG)$. Since $a\in A$ was arbitrary, strict density of $A$ in $\mrm{C}_b(\GG)$ implies $b\ge \eps \I$. Indeed, take $\xi=c\xi_0$ for $c\in \mrm{C}_0(\GG), \xi_0\in \LL^2(\GG)$ and $(a_k)_{k\in K}$ a net in $A$ which converges strictly to $\I$. Then
\[
0\le \la \xi | (a_k^* b a_k - \eps a_k^* a_k) \xi \ra =
\la a_k c \xi_0 | (b-\eps \I) a_k c \xi_0 \ra \xrightarrow[k\in K]{} \la c \xi_0 | (b-\eps \I)c \xi_0 \ra .
\]
As this holds for all vectors $\xi=c\xi_0$ in a dense subset of $\LL^2(\GG)$, we can deduce that $b\ge \eps\I$.
Applying Godement mean $M$ to both sides of this inequality gives $M(b)\ge \eps$.
\end{proof}

The above theorem together with Proposition \ref{prop:Godement} immediately yields the following two corollaries.

\begin{cor}\label{cor1}
	Let $\GG$ be a locally compact quantum group and $A\subset \mrm{C}_b(\GG)$ a strictly dense subspace. If $\whG$ is not coamenable, then there is no $\eps>0$ and no net $(f_i)_{i\in I}$ of normalised positive definite functions in $B_\lambda(\GG)$ such that
	\begin{equation}\label{eq19}
		\forall_{a\in A} \exists_{i_0\in I}\forall_{i\ge i_0}\quad
		a^* f_i a \ge \eps a^* a\;\textnormal{ in }\mrm{C}_b(\GG).
	\end{equation}
\end{cor}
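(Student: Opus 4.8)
The plan is to deduce the corollary directly from Theorem \ref{thm:Godementnv} by specialising the set of representations, and then to translate non-vanishing of the Godement mean into coamenability of the dual via Proposition \ref{prop:Godement}. Recall that by definition $B_\lambda(\GG) = B_{\{\ww\}}(\GG)$, and that $\{\ww\}$ is a legitimate non-empty subset of $\Rep(\GG)$; no closure under finite direct sums is required to apply Theorem \ref{thm:Godementnv}. So I would argue by contraposition. Assume, contrary to the assertion, that there exist $\eps > 0$ and a net $(f_i)_{i\in I}$ of normalised positive definite functions in $B_\lambda(\GG)$ satisfying \eqref{eq19}. Then hypothesis \eqref{eq5} of Theorem \ref{thm:Godementnv} holds verbatim with $S = \{\ww\}$, and that theorem yields that the Godement mean does not vanish on $B_{\{\ww\}}(\GG) = B_\lambda(\GG)$.

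It then remains to invoke Proposition \ref{prop:Godement}(1): the dual $\widehat{\GG}$ fails to be coamenable exactly when the Godement mean vanishes on $B_\lambda(\GG)$. Since we have just exhibited a witness that the mean does not vanish, $\widehat{\GG}$ must be coamenable, contradicting the standing hypothesis of the corollary. Hence no such $\eps$ and no such net can exist, which is precisely the claim.

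There is essentially no obstacle at this stage: all the substance is already carried by Theorem \ref{thm:Godementnv} (whose proof goes through a weak$^*$-compactness argument on $\mrm{C}_0^u(\whG)^*$, the stability of weak containment under passage to GNS representations, and the strict-density device upgrading $a^* b a \ge \eps a^* a$ for every $a \in A$ to $b \ge \eps \I$) and by Proposition \ref{prop:Godement} (which repackages \cite[Theorem 3.1]{BT} together with Proposition \ref{prop2}). The only point deserving a line of comment is the notational identification $B_\lambda(\GG) = B_{\{\ww\}}(\GG)$, reconciling the statement of the corollary with the hypotheses of the theorem; this is immediate from the conventions fixed in Section \ref{sec:prelim}, together with the observation recorded there that $\ww$ is equivalent to its own amplification, so that the class $B_\lambda(\GG)$ is unaffected by enlarging $\{\ww\}$ to its multiples.
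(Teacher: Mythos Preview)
Your argument is correct and matches the paper's own approach exactly: the authors simply state that the corollary follows immediately from Theorem \ref{thm:Godementnv} together with Proposition \ref{prop:Godement}. Your contraposition via $S=\{\ww\}$ and the identification $B_\lambda(\GG)=B_{\{\ww\}}(\GG)$ is precisely the intended deduction, with more detail than the paper itself provides.
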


\begin{cor}\label{cor2}
	Let $\GG$ be a locally compact quantum group and $A\subset \mrm{C}_b(\GG)$ a strictly dense subspace. If $\QG$ does not have the Haagerup property then there is no $\eps>0$ and no net $(f_i)_{i\in I}$ of normalised positive definite functions in $B_0(\GG)$ such that
	\begin{equation}\label{eq20}
		\forall_{a\in A} \exists_{i_0\in I}\forall_{i\ge i_0}\quad
		a^* f_i a \ge \eps a^* a\;\textnormal{ in }\mrm{C}_b(\GG).
	\end{equation}
\end{cor}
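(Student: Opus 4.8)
The plan is to obtain this as an immediate consequence of Theorem \ref{thm:Godementnv} combined with the characterisation of the Haagerup property via the Godement mean recorded in Proposition \ref{prop:Godement}(2). First I would specialise Theorem \ref{thm:Godementnv} to the set $S = S_{mix}$ of all mixing representations of $\QG$; by the very definition of $B_0(\GG)$ we have $B_{S_{mix}}(\GG) = B_0(\GG)$, and the hypotheses on $\GG$ and on the strictly dense subspace $A\subset \mrm{C}_b(\GG)$ are precisely those of the corollary, so the theorem applies verbatim.

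Then I would argue by contraposition. Suppose, contrary to the claim, that there exist $\eps>0$ and a net $(f_i)_{i\in I}$ of normalised positive-definite functions in $B_0(\GG)$ satisfying \eqref{eq20}. Applying Theorem \ref{thm:Godementnv} with $S=S_{mix}$ then yields that the Godement mean does not vanish on $B_0(\GG)=B_{S_{mix}}(\GG)$. By Proposition \ref{prop:Godement}(2) the non-vanishing of the Godement mean on $B_0(\GG)$ is equivalent to $\GG$ having the Haagerup property, which contradicts the standing assumption that $\GG$ does not. Hence no such $\eps$ and net can exist.

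There is essentially no obstacle here: the substantive work has already been carried out in the proof of Theorem \ref{thm:Godementnv} (the passage to a weak${}^*$-convergent subnet of functionals, the GNS construction, the weak-containment argument via \cite[Proposition 3.4.9]{DixmierC}, and the final application of the Godement mean to $b\ge\eps\I$) and in the two ingredients being cited. The only minor verification is that Theorem \ref{thm:Godementnv} is stated for an arbitrary nonempty $S\subset\Rep(\QG)$, which it is, so the choice $S=S_{mix}$ is legitimate. For completeness I would note that Corollary \ref{cor1} is obtained by the identical argument with $S=\{\ww\}$, invoking part (1) of Proposition \ref{prop:Godement} in place of part (2).
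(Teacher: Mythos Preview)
Your proposal is correct and matches the paper's own argument exactly: the paper simply states that Theorem~\ref{thm:Godementnv} together with Proposition~\ref{prop:Godement} immediately yields this corollary, which is precisely the contraposition you spell out with $S=S_{mix}$.
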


In what follows we formulate certain consequences of the above corollaries.

\begin{cor}\label{cor3} Let $\QG$ be a locally compact quantum group, $\eps>0$ and $(f_i)_{i\in I}$ a net of normalised positive definite functions in $B_\lambda(\QG)$. Assume that one of the following is true:
	\begin{enumerate}
		\item $\GG$ is arbitrary and $A=\A(\QG)$ or $A=\{(\id\otimes\omega)(\ww^*)\,|\, \omega\in \LL^1_{\sharp}(\whG)\}$;
		\item $\GG$ is a discrete quantum group and $A$ is equal to $\mrm{c}_{c}(\GG)$ the algebraic direct sum $alg-\bigoplus_{\alpha\in \Irr(\whG)} \B(\Hil_\alpha)$;
		\item more generally, $\GG$ is an algebraic quantum group and $A=\mathfrak{C}_c^{\infty}(\GG)$.
\end{enumerate}
If the condition \eqref{eq19} holds, then $\whG$ is coamenable.
\end{cor}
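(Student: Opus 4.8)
The plan is to deduce the corollary directly from Corollary~\ref{cor1}, whose contrapositive states: if $A\subset\mrm{C}_b(\GG)$ is a strictly dense subspace and there exist $\eps>0$ together with a net of normalised positive definite functions in $B_\lambda(\GG)$ satisfying \eqref{eq19}, then $\whG$ is coamenable. Thus the entire content of the corollary is the verification that, in each of the cases (1)--(3), the subspace $A$ is strictly dense in $\mrm{C}_b(\GG)=\M(\mrm{C}_0(\GG))$.

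First I would record the elementary observation that every norm-dense linear subspace $A$ of a $\mrm{C}^*$-algebra $B$ is automatically strictly dense in $\M(B)$: the strict closure of $A$ is strictly closed and contains $B$ (each element of $B$ being a norm limit, hence a strict limit, of elements of $A$), so it contains the strict closure of $B$, which is all of $\M(B)$. Applied with $B=\mrm{C}_0(\GG)$, this reduces each of the three cases to showing that the corresponding $A$ is norm-dense in $\mrm{C}_0(\GG)$.

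The three density checks are then routine. In case (3) norm-density of $\mathfrak{C}^{\infty}_c(\GG)$ in $\mrm{C}_0(\GG)$ is part of the definition of an algebraic quantum group recalled in Section~\ref{sec:prelim}, and case (2) is its special instance $\mathfrak{C}^{\infty}_c(\GG)=\mrm{c}_c(\GG)$ valid for discrete $\GG$, also recorded there. In case (1) with $A=\A(\QG)$: the identification of $\A(\QG)$ with $\LL^1(\whG)$ underlying Lemma~\ref{lem:dual}, together with standardness of the action of $\LL^\infty(\whG)$ on $\LL^2(\GG)$, gives $\A(\QG)=\{(\id\ot\omega)(\ww^*)\,|\,\omega\in\LL^1(\whG)\}$, which is norm-dense in $\mrm{C}_0(\GG)$ (see e.g.\ \cite{Brannan}). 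For the second choice $A=\{(\id\ot\omega)(\ww^*)\,|\,\omega\in\LL^1_\sharp(\whG)\}$: since $\LL^1_\sharp(\whG)$ is norm-dense in $\LL^1(\whG)$ and $\omega\mapsto(\id\ot\omega)(\ww^*)$ is norm-continuous, this $A$ is norm-dense in $\A(\QG)$, hence in $\mrm{C}_0(\GG)$.

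Once strict density of $A$ has been established in all cases, the conclusion follows at once from Corollary~\ref{cor1} applied to the given $\eps$ and net $(f_i)_{i\in I}$. I do not anticipate a genuine obstacle here; the only mildly delicate point is the passage from norm-density in $\mrm{C}_0(\GG)$ to strict density in $\mrm{C}_b(\GG)$, and the standard identification of $\A(\QG)$ with the full space of $\LL^1(\whG)$-slices of $\ww^*$ (and its norm-density in $\mrm{C}_0(\GG)$) for arbitrary, not necessarily algebraic, $\GG$.
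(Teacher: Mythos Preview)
Your proposal is correct and is exactly the argument the paper has in mind; the paper does not spell out a proof of Corollary~\ref{cor3} at all, treating it as immediate from Corollary~\ref{cor1}, and you have supplied precisely the missing verification---that each of the listed subspaces $A$ is strictly dense in $\mrm{C}_b(\GG)$, reducing via your norm-to-strict observation to norm-density in $\mrm{C}_0(\GG)$.
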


Let us note that in cases $(2),(3)$, we can interpret condition \eqref{eq19} as an inequality ``$f_i\ge \eps \I$'' which holds pointwise-eventually (resp.~eventually uniformly on compact sets). A similar corollary is true in the case of Haagerup property.

Finally let us record the corresponding corollary for classical locally compact groups, whose `non-amenable' part can be also deduced from \cite{Derighetti}.

\begin{cor}\label{cor4} Let $G$ be a locally compact group. If $G$ is not amenable (respectively, does not have the Haagerup property) then for no $\eps>0$ can we find a net $(f_i)_{i \in I}$ of normalised positive-definite functions which are compactly supported (respectively, belong to $\C_0(G)$) and on every compact subset of $G$ are eventually greater than $\eps$. 
\end{cor}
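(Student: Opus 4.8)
The plan is to deduce Corollary \ref{cor4} directly from Corollary \ref{cor1} and Corollary \ref{cor2} (equivalently, from Corollary \ref{cor3} in the Haar-integral-supported form) applied to the locally compact quantum group $\GG = G$, and to translate the abstract hypotheses into the classical statement. First I would recall the standard dictionary: for a classical locally compact group $G$, the quantum group $\GG$ has $\LL^\infty(\GG) = \LL^\infty(G)$ (acting by multiplication on $\Ltwo(G)$), $\mrm{C}_b(\GG) = \C_b(G)$, $\mrm{C}_0(\GG) = \C_0(G)$, and the dual $\whG$ is the ``group von Neumann algebra'' quantum group; crucially, amenability of $G$ is equivalent to coamenability of $\whG$, and the Haagerup property of $G$ in the classical sense coincides with the Haagerup property of $\GG$ in the sense of \cite{QHap}. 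Under this identification a normalised positive-definite function on $\GG$ in the sense of the paper is exactly a continuous normalised positive-definite function on $G$ in the usual sense (bounded by $1$, with value $1$ at the identity), $B_\lambda(\GG)$ corresponds to the functions in the Fourier--Stieltjes algebra associated to the regular representation, and $B_0(\GG)$ to those associated to mixing (i.e.\ $\C_0$-coefficient) representations; compactly supported continuous positive-definite functions lie in $B_\lambda(G)$ and positive-definite functions in $\C_0(G)$ lie in $B_0(G)$ (the latter by \cite[Lemma 4.3]{QHap} as noted in the text).

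Next I would choose the strictly dense subspace $A \subset \C_b(G)$ to be $\Cc(G)$, the space of compactly supported continuous functions; this is strictly dense in $\C_b(G) = \M(\C_0(G))$. The key point is then to interpret the abstract condition \eqref{eq19} (resp.\ \eqref{eq20}), namely $a^* f_i a \ge \eps\, a^* a$ in $\C_b(G)$ for all $a \in A$ eventually, in classical terms. Since the relevant von Neumann algebra is commutative, the inequality $a^* f_i a \ge \eps a^* a$ is simply the pointwise inequality $|a(t)|^2 f_i(t) \ge \eps |a(t)|^2$ for (locally almost) every $t$, equivalently $f_i(t) \ge \eps$ for every $t$ in the support of $a$. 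Given a compact set $\Kil \subseteq G$ one can pick $a \in \Cc(G)$ with $a \equiv 1$ on $\Kil$ (by Urysohn), so the hypothesis ``on every compact subset of $G$ the net $(f_i)$ is eventually greater than $\eps$'' implies \eqref{eq19} for $A = \Cc(G)$; conversely \eqref{eq19} ranging over all $a \in \Cc(G)$ clearly implies eventual domination by $\eps$ on every compact set. Thus the two formulations coincide, and Corollary \ref{cor1} (resp.\ Corollary \ref{cor2}) applied to $\GG = G$ with $A = \Cc(G)$ yields precisely the contrapositive of Corollary \ref{cor4}: if such an $\eps$ and net existed, then $\whG$ would be coamenable (resp.\ $G$ would have the Haagerup property), i.e.\ $G$ would be amenable.

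The main (and essentially only) obstacle is bookkeeping of conventions: one must be careful that the paper's definition of ``coefficient'' uses $U^*$ and that the Fourier algebra is normalised so that $\A(G)$ corresponds to $\LL^1(\whG)$, and one must make sure that a positive-definite function on $\GG$ in the sense used here (a positive functional of norm $\le 1$ on $\C_0^u(\whG)$) really corresponds, for classical $G$, to an honest continuous positive-definite function normalised by $\vp(e) \le 1$ rather than, say, a not-necessarily-continuous one; this is standard (positive-definite elements of $B(G)$ are automatically continuous) but should be invoked. Modulo this, the proof is a two-line reduction, so I would keep it short: state the identifications $\LL^\infty(\GG) = \LL^\infty(G)$, $\whG$ coamenable $\iff$ $G$ amenable, and ``Haagerup $\iff$ Haagerup'', observe that in the commutative algebra $\C_b(G)$ the condition \eqref{eq19} with $A = \Cc(G)$ is equivalent to eventual domination by $\eps$ on compacta (using Urysohn's lemma in one direction), and then quote Corollaries \ref{cor1} and \ref{cor2}.
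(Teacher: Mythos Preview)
Your proposal is correct and follows essentially the same approach as the paper: apply Corollaries \ref{cor1} and \ref{cor2} with $A = \Cc(G)$, note that compactly supported (respectively, $\C_0$) positive-definite functions belong to $B_\lambda(G)$ (respectively, $B_0(G)$), and observe that in the commutative algebra $\C_b(G)$ the inequality $a^* f_i a \ge \eps a^* a$ reduces to the pointwise bound $f_i \ge \eps$ on the support of $a$. The paper's own proof is somewhat terser and omits the Urysohn step and the convention discussion, but the argument is the same.
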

\begin{proof}
	Recall first that, as already mentioned above, compactly supported positive-definite functions automatically belong to $\A(G) \subset B_\lambda(G)$ (as shown already in \cite{Eymard}) and positive-definite  functions in $\C_0(G)$ automatically belong to $B_0(G)$  by  \cite[Lemma 4.3]{QHap}. 
	
	We can apply Corollaries \ref{cor1} and \ref{cor2} with $A= \C_c(G)$. In the commutative case the inequality appearing in these corollaries amounts then to saying that given a supposed net of positive-definite functions $(f_i)_{i\in I}$,  for every compact set $Z \subset G$ we have $i_Z \in I$ such that for each $i\geq i_Z$ we have $f_i|_Z\geq \eps$. 
\end{proof}

Note that for $G$ discrete the last condition (being on every compact subset of $G$  eventually greater than $\eps$) means simply that for every point $t \in G$ we have $\limsup_{i \in I}f_i(t)\geq \eps$.

\section{Separation properties for von Neumann algebras of discrete quantum groups} \label{sec:sepOA}

In this section we let $\GG$ be a compact quantum group. If $\GG$ is coamenable, then the von Neumann algebra $\LL^{\infty}(\GG)$ is injective, i.e.~has $w^*$-CPAP (\cite[Theorem 1.1]{BMT}) and we can find a net $(\Phi_\lambda)_{\lambda\in \Lambda}$ of normal, finite rank UCP maps $\LL^{\infty}(\GG)\rightarrow\LL^{\infty}(\GG)$ which converges to identity in the point-$w^*$-topology. In fact, we can assume that maps $\Phi_\lambda$ are \emph{quantum Herz-Schur multipliers} (known also as \emph{adjoints of centralisers}), i.e.~they are given by $\Phi_\lambda=(\omega_\lambda\otimes \id)\Delta$ for some states $\omega_\lambda\in \LL^1(\GG)$. 

We will now exploit the separation results for quantum positive-definite functions obtained in the previous section to obtain their analogues on the von Neumann algebraic level. This will in particular strengthen the result mentioned above.

Before we formulate the first proposition, let us make some preliminary observations. Recall that by Lemma \ref{lem:dual} we have a linear bijection $B_\lambda(\whG)\simeq \mrm{C}(\GG)^*$ and any $a\in B_\lambda(\whG)$ can be written as $a=(\id\otimes \omega_{\xi,\eta})(U^{ *})$ for some unitary representation $U\lec \ww^{\whG}$ and vectors $\xi,\eta\in \Hil_U$. As $U\lec \ww^{\whG}$, functional $\omega_{\xi,\eta}\circ\phi_U\in \mrm{C}^u(\GG)^*$ is well defined  on $\mrm{C}^u(\GG)/\ker \lambda_{\GG}=\mrm{C}(\GG)$ and we can write $a=(\id\otimes\omega) (\ww^{\whG *})$ for some (not necesarilly normal) functional $\omega\in \mrm{C}(\GG)^*$ -- the image of $\omega_{\xi,\eta}\circ\phi_U$. Furthermore, as $B_\lambda(\whG)\subset \M^l_{cb}(\A(\whG))$, we can use the associated normal CB map $\Theta^l(a)\in\CB^\sigma(\LL^{\infty}(\GG))$. According to \cite[Proposition 4.8]{Brannan} (see also \cite{DawsMultipliers}), this map is given by
\[
\Theta^l(a)\colon \LL^{\infty}(\GG)\ni x \mapsto
(\id\otimes \omega_{\xi,\eta}) (U (x\otimes \I)U^*)\in\LL^{\infty}(\GG).
\]

Combining these two properties gives us 
\begin{equation}\label{eq1}
	\Theta^l(a)\colon \LL^{\infty}(\GG)\ni x \mapsto
	(\id\otimes \omega) (\ww^{\whG} (x\otimes \I)\ww^{\whG *})\in\LL^{\infty}(\GG),
\end{equation}
where we interpret $\ww^{\whG},\ww^{\whG *}$ as elements of $\M(\mc{K}(\LL^2(\GG))\otimes \mrm{C}(\GG))$, so that $\ww^{\whG} (x\otimes \I)\ww^{\whG *}\in \M(\mc{K}(\LL^2(\GG))\otimes \mrm{C}(\GG))$ and \eqref{eq1} makes sense.

Here and below $\|\cdot\|_2$ denotes the $\LL^2$-norm on $\LL^{\infty}(\GG)$ induced by the Haar integral, i.e.~$\|x\|_2=h(x^*x)^{1/2}$, $x \in \LL^\infty(\GG)$. One can view the next proposition (and most results in this section) as a way of separating finite rank maps from the identity using $x\in \LL^{\infty}(\GG)$  and $\omega \in \LL^{1}(\GG)$.

\begin{prop}\label{prop5}
	Let $\GG$ be a compact quantum group which is not coamenable and let $\eps\in (0,1)$. Suppose that $(\Phi_\lambda)_{\lambda \in \Lambda}$ is a net of normal, UCP quantum Herz-Schur multipliers $\Phi_\lambda \colon \LL^{\infty}(\GG)\rightarrow\LL^{\infty}(\GG)$ , given by $\Phi_\lambda=\Theta^l(a_\lambda)$ for $a_\lambda\in B_\lambda(\whG)$. Then there exist $x\in \Pol(\GG)$ and $\omega\in \LL^1(\GG)$  with $\|x\|_2=\|\omega\|=1$ such that $\limsup_{\lambda\in \Lambda}|\langle x-\Phi_\lambda(x),\omega\rangle| > \eps$.
\end{prop}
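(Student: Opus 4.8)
The goal is to show that a net of finite-rank normal UCP Herz--Schur multipliers on $\LL^\infty(\GG)$ cannot be "uniformly $\eps$-close to the identity" when $\GG$ is not coamenable. The plan is to argue by contradiction: assume that for every pair $(x,\omega)$ with $\|x\|_2 = \|\omega\| = 1$ we have $\limsup_{\lambda} |\langle x - \Phi_\lambda(x), \omega\rangle| \le \eps$, and extract from this a net of normalised positive-definite functions in $B_\lambda(\GG)$ (that is, in $B_\lambda$ of the discrete dual of $\GG$ -- here $\widehat{\GG}$ is discrete, $\GG = \widehat{\widehat{\GG}}$ is compact, and the relevant positive-definite functions live on the discrete quantum group $\widehat{\GG}$) which is eventually separated from $0$ on a strictly dense subalgebra, contradicting Corollary \ref{cor1}.

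\textbf{Step 1: translate the UCP multipliers into positive-definite functions.} Each $\Phi_\lambda = \Theta^l(a_\lambda)$ with $a_\lambda \in B_\lambda(\widehat{\GG})$; since $\Phi_\lambda$ is UCP and normal and finite rank, the associated element $a_\lambda$ is a normalised positive-definite function in $B_\lambda(\widehat{\GG})$ which is finitely supported (finite rank of $\Theta^l(a_\lambda)$ forces $a_\lambda \in \mrm{c}_c(\widehat{\GG})$, up to the standard identification). This is exactly the family that feeds into Corollary \ref{cor1}, with strictly dense subspace $A = \mrm{c}_c(\widehat{\GG}) = alg\text{-}\bigoplus_{\alpha \in \Irr(\GG)} \B(\Hil_\alpha)$ as in Corollary \ref{cor3}(2).

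\textbf{Step 2: pass between the $\LL^2$-estimate and the pointwise inequality on $\mrm{c}_c(\widehat{\GG})$.} Using formula \eqref{eq1} and the explicit form $\Phi_\lambda(x) = (\id\otimes\omega_\lambda)\big(\ww^{\widehat{\GG}}(x\otimes\I)\ww^{\widehat{\GG}*}\big)$, a direct computation shows that on matrix coefficients $x = U^\alpha_{ij}$ the map $\Phi_\lambda$ acts as multiplication by the value of $a_\lambda$ on the block $p_\alpha$; concretely $\Phi_\lambda$ restricted to $\Pol(\GG)$ is the "Fourier multiplier" with symbol $a_\lambda$. The key point is: if $a_\lambda$ does \emph{not} satisfy $p_\alpha a_\lambda \ge \eps p_\alpha$ for all $\alpha$ eventually, then for some $\alpha$ and infinitely many $\lambda$ there is a unit vector $\zeta_\alpha \in \Hil_\alpha$ with $\langle \zeta_\alpha | a_\lambda \zeta_\alpha\rangle < \eps$ (using $a_\lambda \le \I$ and positivity); one then builds $x \in \Pol(\GG)$ supported on $\alpha$ and $\omega \in \LL^1(\GG)$ (both of the appropriate normalisation, via $\|\cdot\|_2$ on the $x$-side, using the orthogonality relations/Kac trace) so that $\langle x - \Phi_\lambda(x),\omega\rangle$ has real part at least $1 - \eps' > \eps$ along that subnet. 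This is the heart of the matter: choosing $x, \omega$ to "witness" the failure of the block inequality while keeping both norms exactly $1$.

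\textbf{Step 3: assemble the contradiction.} If the conclusion of the proposition fails -- i.e.\ $\limsup_\lambda |\langle x - \Phi_\lambda(x),\omega\rangle| \le \eps$ for \emph{all} normalised $(x,\omega)$ -- then by Step 2 the symbols $a_\lambda$ must satisfy: for every $\alpha \in \Irr(\GG)$ there is $\lambda_0$ with $p_\alpha a_\lambda \ge \eps' p_\alpha$ for $\lambda \ge \lambda_0$ (for a suitable $\eps'$ slightly larger than $\eps$, or one argues directly with $\eps$ by being careful about $\limsup$ versus eventual bounds -- a minor technical point handled by a standard $\eps$-juggling). Reading this on the discrete quantum group $\widehat{\GG}$, this is exactly condition \eqref{eq19} of Corollary \ref{cor1} for the net $(a_\lambda)$ of normalised finitely supported (hence $B_\lambda$) positive-definite functions, with $A = \mrm{c}_c(\widehat{\GG})$. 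Corollary \ref{cor1} (via Corollary \ref{cor3}(2)) then forces $\GG = \widehat{\widehat{\GG}}$ to be coamenable, contradicting the hypothesis. Hence the desired $(x,\omega)$ exists.

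\textbf{Main obstacle.} The delicate step is Step 2: correctly normalising the witnessing pair so that $\|x\|_2 = \|\omega\| = 1$ \emph{simultaneously} while extracting a quantitative lower bound $> \eps$ (not just $\ge \eps$) on the $\limsup$. One must be careful that the failure of $p_\alpha a_\lambda \ge \eps p_\alpha$ only gives $\langle\zeta_\alpha| a_\lambda\zeta_\alpha\rangle < \eps$, which would translate to $\ge$ rather than $>$; the clean fix is to run the contrapositive with a slightly smaller threshold $\eps'' < \eps$, apply the separation corollary with $\eps''$, and note that non-coamenability is independent of the threshold. A secondary point is verifying that $\Phi_\lambda$ being finite rank indeed forces $a_\lambda$ to be finitely supported, which follows from the explicit Fourier-multiplier description together with the fact that each block $p_\alpha\LL^\infty(\GG)p_\alpha$ is finite-dimensional; this is essentially the computation indicated (but commented out) in the excerpt.
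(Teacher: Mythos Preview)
Your overall strategy---assume the conclusion fails, translate this into an ``eventually bounded below'' statement for the symbols $a_\lambda$ on each block $p_\alpha$, and invoke Corollary~\ref{cor3}(2) to contradict non-coamenability---matches the paper's. However, there are two genuine gaps.

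\textbf{Self-adjointness is not free.} You repeatedly treat $a_\lambda$ as if the block $(a_\lambda)_\alpha \in \B(\Hil_\alpha)$ were self-adjoint: you write ``$p_\alpha a_\lambda \ge \eps p_\alpha$'', invoke ``$a_\lambda \le \I$ and positivity'', and speak of a unit vector with $\langle \zeta_\alpha \mid a_\lambda \zeta_\alpha\rangle < \eps$ witnessing failure of an operator inequality. But a normalised positive-definite function $a_\lambda=(\id\otimes\omega_\lambda)(\ww^{\whG*})$ associated with a state $\omega_\lambda$ has blocks $(a_\lambda)^\alpha_{i,k}=\omega_\lambda(U^\alpha_{i,k})$, and there is no reason these matrices are hermitian. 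The inequality in Corollary~\ref{cor1}/\ref{cor3} is an operator inequality in $\mrm{c}_b(\whG)$, so it is vacuous or meaningless unless the $f_\lambda$ you feed in are self-adjoint. The paper spends real effort here: it first averages $\omega_\lambda$ over the scaling group (via an invariant mean $m_\RR$) to obtain $\omega_\lambda^{(1)}$, then symmetrises with the unitary antipode, $\omega_\lambda^{(2)}=\tfrac{1}{2}(\omega_\lambda^{(1)}+\omega_\lambda^{(1)}\circ R_\GG)$, and only then is the resulting $f_\lambda=(\id\otimes\omega_\lambda^{(2)})(\ww^{\whG*})$ provably self-adjoint. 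Your plan has no analogue of this step, so Step~2 as written does not go through.

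\textbf{The non-Kac bookkeeping is nontrivial.} The proposition does not assume Kac type, so the Woronowicz characters $\uprho_\alpha$ enter all the norm computations: $\|U^\alpha_{\zeta,\eta}\|_2$ and $\|h(U^{\alpha*}_{\xi,\eta}\cdot)\|$ involve $\uprho_\alpha^{-1}$. Converting the hypothesis $\limsup_\lambda|\langle x-\Phi_\lambda(x),\omega\rangle|\le\eps$ (with $\|x\|_2=\|\omega\|=1$) into an entrywise estimate on $(a_\lambda)_\alpha$ therefore yields inequalities like $|\langle\zeta|\uprho_\alpha^{-1}\xi\rangle-\omega_\lambda(U^\alpha_{\zeta,\uprho_\alpha^{-1}\xi})|\le\eps'\sqrt{\langle\xi|\uprho_\alpha^{-1}\xi\rangle\langle\zeta|\uprho_\alpha^{-1}\zeta\rangle}$, valid initially only for $\xi,\zeta$ in a common eigenspace of $\uprho_\alpha$. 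The scaling-group averaging has a second role here: it makes $f_\lambda p_\alpha$ block-diagonal with respect to the eigenspaces of $\uprho_\alpha$, so eigenvectors of $f_\lambda p_\alpha$ can be taken inside a single eigenspace and the estimate applies. Your sketch (``choosing $x,\omega$ to witness the failure \dots\ while keeping both norms exactly~$1$'') does not engage with any of this.

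\textbf{A smaller point.} The proposition does \emph{not} assume the $\Phi_\lambda$ are finite rank; the $a_\lambda$ lie only in $B_\lambda(\whG)$, and the state $\omega_\lambda$ may be non-normal. The paper first approximates $\omega_\lambda$ by vector states with vectors in $\Lambda_h(\Pol(\GG))$ precisely so that, after averaging with the (non-normal) mean $m_\RR$, the resulting $\omega_\lambda^{(1)}$ is still a \emph{normal} state---hence $f_\lambda$ lands in $\mrm{c}_c(\whG)\subset B_\lambda(\whG)$ rather than merely in $B(\whG)$. Your reliance on ``finite rank forces $a_\lambda\in\mrm{c}_c(\whG)$'' both misreads the hypothesis and misses why finite support is actually needed in the argument.
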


\begin{proof}
	For each $\alpha\in \Irr(\GG)$ choose a representative $U^{\alpha}\in \alpha$ and an orthonormal basis $\{\xi^\alpha_{i}\}_{i=1}^{\dim(\alpha)}$ in $\msf{H}_\alpha$. Assume furthermore that $\uprho_\alpha$-operators (see \cite[Section 1.7]{NeshveyevTuset}) are diagonal with respect to the chosen basis, with eigenvalues $\{\uprho_{\alpha,i}\}_{i=1}^{\dim(\alpha)}$.
	
	Assume by contradiction that for all $x\in\Pol(\GG),\rho\in \LL^1(\GG)$ with $\|x\|_2=\|\rho\|=1$ we have 
\begin{equation}\label{eq21}
\limsup_{\lambda\in \Lambda}|\la x-\Phi_\lambda(x),\rho\ra | \le \eps.
\end{equation}
Fix $\lambda \in \Lambda$. As recalled above (equation \eqref{eq1}) the map $\Phi_\lambda=\Theta^l(a_\lambda)$ is given by 
	\[
	\Phi_\lambda(x)=
	(\id\otimes \omega_{\lambda}) (\ww^{\whG} (x\otimes \I)\ww^{\whG *})
	\]
	for $x\in\LL^{\infty}(\GG)$ and some functionals $\omega_\lambda\in \mrm{C}(\GG)^*$. As $\Phi_\lambda$ is UCP, $\omega_\lambda$ is a state. Consequently
	\begin{equation}\label{eq10}
		\la \Phi_\lambda(x), \rho \ra =
		\omega_{\lambda}\bigl(
		(\rho\otimes \id) (\ww^{\whG} (x\otimes \I)\ww^{\whG *})
		\bigr)\quad(\rho\in \LL^1(\GG))
	\end{equation}
	and $(\rho\otimes \id) (\ww^{\whG} (x\otimes \I)\ww^{\whG *})
	\in \M(\mrm{C}(\GG))=\mrm{C}(\GG)$. For each $\lambda\in\Lambda$, we can find a net $(\omega_{\lambda,i})_{i\in I}$ of normal states in $\LL^1(\GG)$ such that $\omega_{\lambda,i}\xrightarrow[i\in I]{}\omega_\lambda$ pointwise on $\mrm{C}(\GG)$. Using the fact that $\LL^{\infty}(\GG)\subset \B(\LL^2(\GG))$ is standard and approximating further, we can assume that $\omega_{\lambda,i}=\omega_{\xi_{\lambda,i}}$ for length $1$ vectors $\xi_{\lambda,i}\in \Lambda_h(\Pol(\GG))$. Define $\Phi_{\lambda,i}=(\omega_{\lambda,i}\otimes \id)\Delta$. Observe using \eqref{eq10} that for any $x\in \LL^{\infty}(\GG),\rho\in\LL^1(\GG)$ 
	\begin{align*}
		\lim_{i\in  I}
		\la x - \Phi_{\lambda,i}(x) , \rho \ra &=
		\lim_{i\in  I}
		\la x - (\omega_{\lambda,i}\otimes \id)\Delta(x) , \rho \ra =
		\la x,\rho\ra  -
		\lim_{i\in I} \la  \omega_{\lambda,i}, 
		(\rho\otimes \id)(
		\ww^{\whG}(x\otimes \I)\ww^{\whG *}
		) \ra \\
		&=
		\la x,\rho\ra  -
		\la  \omega_{\lambda}, 
		(\rho\otimes \id)(
		\ww^{\whG}(x\otimes \I)\ww^{\whG *}
		) \ra =
		\la x -\Phi_\lambda(x) , \rho \ra .
	\end{align*}
	
	Consequently (after passing to a new net), we obtain a net of finite rank UCP maps $(\wt{\Phi}_\lambda)_{\lambda\in \Lambda}$ which still satisfies \eqref{eq21} for all $x\in \Pol(\QG),\rho\in\LL^1(\QG)$ with $\|x\|_2=\|\rho\|=1$ with each $\wt{\Phi}_{\lambda}$ given by $\wt{\Phi}_\lambda=\Theta^l(\wt{a}_\lambda)$ where $\wt{a}_\lambda=(\id\otimes \wt{\omega}_{\lambda})(\ww^{\whG *})$ for some $\wt{\omega}_{\lambda}=\omega_{\xi_\lambda}$, $\xi_\lambda\in \Lambda_h(\Pol(\GG))$. Explicitly, the map $\wt{\Phi}_\lambda$ is given by $\wt{\Phi}_\lambda=(\wt{\omega}_\lambda\otimes \id)\Delta$. Next we need to correct functionals $\wt{\omega}_\lambda$.\\
	
	Pick $m_{\RR}\in \LL^\infty(\RR)^*$, a mean on $\RR$, and define (c.f.~\cite{Tomatsu})
	\[
	\omega_\lambda^{(1)}\colon \LL^{\infty}(\GG)\ni y\mapsto m_{\RR}\bigl(\RR\ni t \mapsto 
	\wt{\omega}_\lambda( \tau_t^{\GG}(y))\in \CC\bigr)\in \CC\quad(\lambda\in \Lambda),
	\]
	where $(\tau_t)_{t \in \mathbb{R}}$ denotes the \emph{scaling automorphism group} of $\LL^\infty(\QG)$ -- see \cite[Section 1.7]{NeshveyevTuset}.
	Since $\wt{\Phi}_\lambda$ is finite rank, acts via $\wt{\Phi}_\lambda(U^{\alpha}_{i,j})=\sum_{k=1}^{\dim(\alpha)} \wt{\omega}_\lambda(U^{\alpha}_{i,k}) U^{\alpha}_{k,j}$ and $U^{\alpha}_{i,j}$'s form a linearly independent set, we can conclude that there is a finite set $F_\lambda\subset \Irr(\GG)$ so that $\omega_\lambda(U^{\alpha}_{i,j})=0$ for all $\alpha\in F_\lambda^c= \Irr(\GG)\setminus F_\lambda$, $1\le i,j\le \dim(\alpha)$. Enlarge $F_\lambda$, so that it is closed under taking the contragradient representation. Next, as $\tau_t^{\GG}(U^{\alpha}_{i,j})=\bigl(\tfrac{\uprho_{\alpha,i}}{\uprho_{\alpha,j}}\bigr)^{it} U^{\alpha}_{i,j}$ we still have $\omega_\lambda^{(1)}(U^{\alpha}_{i,j})=0$ for $\alpha\in F_\lambda^c$ and consequently $\omega_\lambda^{(1)}$ is a normal state. Finally, use the \emph{unitary antipode} $R_{\GG}$ (\cite[Proposition 1.7.11]{NeshveyevTuset}) to define normal states
	\[
	\omega_\lambda^{(2)}=\tfrac{1}{2}(\omega_\lambda^{(1)}+ \omega_\lambda^{(1)}\circ R_{\GG})\in \LL^1(\GG)
	\]
	(observe that $\omega_\lambda^{(1)}\circ R_{\GG}$ is a normal state since $R_{\GG}$ is positive and normal) and normalised positive definite functions
	\[
	f_\lambda=(\id\otimes \omega_\lambda^{(2)})(\ww^{\whG *}).
	\]
	Since also $\omega_\lambda^{(2)}(U^{\alpha}_{i,j})=0$ whenever $\alpha\in F_\lambda^c$, we have $f_\lambda\in \mrm{c}_{c}(\whG)$. Let us argue that $f_\lambda$ are self-adjoint. Observe first that since $\omega_\lambda^{(1)}\circ\tau^{\GG}_t=\omega_\lambda^{(1)}$, $(R_{\whG}\otimes R_{\GG})(\ww^{\whG})=\ww^{\whG }$ and $(\tau_t^{\whG}\otimes \tau_t^{\GG})(\ww^{\whG })=\ww^{\whG }$ for all $t\in \RR$ (see \cite[Section 5]{VanDaele}) we have
	\begin{align*}
		(\id\otimes \omega_\lambda^{(1)})(\ww^{\whG *})& =
		S_{\whG}\bigl((\id\otimes \omega_\lambda^{(1)})(\ww^{\whG})\bigr)=
		R_{\whG} \tau^{\whG}_{-i/2}\bigl(
		(\id\otimes \omega_\lambda^{(1)})(\ww^{\whG})\bigr)\\
		&=
		R_{\whG} \bigl(
		(\id\otimes \omega_\lambda^{(1)})(\ww^{\whG})\bigr)=
		(\id\otimes \omega_\lambda^{(1)}\circ R_{\GG})(\ww^{\whG}).
	\end{align*}
	Consequently
	\[\begin{split}
		f_\lambda&=\tfrac{1}{2}\bigl( (\id\otimes \omega_\lambda^{(1)})(\ww^{\whG *})+
		(\id\otimes \omega_{\lambda}^{(1)}\circ R_{\GG})(\ww^{\whG *})\bigr)\\
		&=
		\tfrac{1}{2} \bigl( 
		(\id\otimes \omega_\lambda^{(1)}\circ R_{\GG})(\ww^{\whG})+
		(\id\otimes \omega_\lambda^{(1)}\circ R_{\GG})(\ww^{\whG})^*\bigr)
	\end{split}\]
	is self-adjoint.
	
	Since $0<\eps<1$, we can choose $\eps',\eps''$ satisfying $\eps<\eps'<\eps''<1$. To obtain a contradiction, we will use point $(2)$ of Corollary \ref{cor3}. Namely, let us argue that for any $a\in \mrm{c}_{c}(\whG)$ there is $\lambda_0\in \Lambda$ so that for $\lambda\ge \lambda_0$
	\begin{equation}\label{eq2}
		a^* f_\lambda a \ge (1-\eps'') a^* a.
	\end{equation}
	This will give us a contradiction. First observe that to get \eqref{eq2} it is enough to consider central projections $a=p_\alpha$ for $\alpha\in \Irr(\GG)$. Indeed, write $a=\sum_{\alpha\in F} a p_\alpha$ for a finite set $F\subset \Irr(\GG)$ and assume that \eqref{eq2} holds for all $p_\alpha\,(\alpha\in F)$ and corresponding $\lambda_\alpha$. Choose $\lambda_0 \in \Lambda$ such that $\lambda_0\ge \lambda_\alpha\,(\alpha\in F)$. Then for $\lambda\ge \lambda_0$ we have
	\[
	a^* f_\lambda a=
	\sum_{\alpha\in F} (a^* p_\alpha) (p_\alpha^* f_\lambda p_\alpha) (a p_\alpha) \ge 
	\sum_{\alpha\in F} (a^* p_\alpha) ((1-\eps'') p_\alpha^* p_\alpha) (a p_\alpha)=
	(1-\eps'') a^* a.
	\]
	
	Thus it is enough to prove \eqref{eq2} for $a=p_\alpha$. Fix $\alpha\in \Irr(\GG)$, non-zero vectors $\xi,\eta,\zeta \in \Hil_\alpha$ with $\|\eta\|=1$ and consider elements $x=\tfrac{U^{\alpha}_{\zeta,\eta }}{\|U^{\alpha}_{\zeta,\eta}\|_2}, \rho=\tfrac{h(U^{\alpha *}_{\xi,\eta }\cdot)}{\|h(U^{\alpha *}_{\xi,\eta}\cdot)\|}$. The assumption \eqref{eq21} (or rather its variant for the modified maps $\wt{\Phi}_\lambda$) implies in particular that
	\begin{equation}\label{eq11}
		\limsup_{\lambda\in \Lambda} \bigl|h\bigl( U^{\alpha *}_{\xi,\eta } (U^{\alpha}_{\zeta,\eta }  - \wt{\Phi}_\lambda(U^{\alpha}_{\zeta,\eta }))\bigr)  \bigr| \le  \|U^{\alpha}_{\zeta,\eta }\|_2 \|h(U^{\alpha *}_{\xi,\eta}\cdot)\|\,\eps,
	\end{equation}
	i.e.\ there exists $\lambda_0\in \Lambda$ (depending on $\xi,\eta,\zeta$) so that for $\lambda\ge \lambda_0$
	\[
	\bigl|h\bigl( U^{\alpha *}_{\xi,\eta } (U^{\alpha}_{\zeta,\eta}  - \wt{\Phi}_\lambda(U^{\alpha}_{\zeta,\eta}))\bigr)  \bigr| \le  \|U^{\alpha}_{\zeta,\eta}\|_2 \|h(U^{\alpha *}_{\xi,\eta}\cdot)\|\,\eps'.
	\]
	The left hand side of the above inequality is equal to
	\[\begin{split}
	\bigl|h\bigl( U^{\alpha *}_{\xi,\eta } (U^{\alpha}_{\zeta,\eta}  - \wt{\Phi}_\lambda(U^{\alpha}_{\zeta,\eta}))\bigr)  \bigr|&=
	\bigl|\tfrac{\la \zeta |\uprho_{\alpha}^{-1}\xi\ra  }{\dim_q(\alpha)} - \sum_{m=1}^{\dim(\alpha)}
	\omega_\lambda(U^{\alpha}_{\zeta,\xi^{\alpha}_m} )
	\tfrac{ \la \xi^\alpha_m | \uprho_\alpha^{-1} \xi\ra  }{ \dim_q(\alpha)}\bigr|\\
	&=
	\tfrac{1}{\dim_q(\alpha)}
	\bigl|\la \zeta|\uprho_\alpha^{-1}\xi\ra - \omega_\lambda(U^{\alpha}_{\zeta,\uprho_\alpha^{-1}\xi})\bigr|.
	\end{split}\]
	Hence we obtain
	\begin{equation}\label{eq3}
		\bigl|\la \zeta|\uprho_\alpha^{-1}\xi\ra - \omega_\lambda(U^{\alpha}_{\zeta,\uprho_\alpha^{-1}\xi})\bigr|
		\le  \dim_q(\alpha) \|U^{\alpha}_{\zeta,\eta } \|_2 \|h(U^{\alpha *}_{\xi,\eta}\cdot)\|\,\eps' \le 
		\eps' \sqrt{ \la \xi|\uprho_\alpha^{-1}\xi\ra \, \la \zeta | \uprho_\alpha^{-1}\zeta \ra }
	\end{equation}
	for fixed $\xi,\zeta$ and all $\lambda\ge \lambda_0$. Next we obtain a bound for $\omega^{(1)}_\lambda$. Assume additionally that $\xi,\zeta\in 1_{\{c\}}(\uprho_\alpha)\msf{H}_\alpha$ for some $c\in\operatorname{Sp}(\uprho_\alpha)$. Recall that $\tau^{\GG}_t(U^{\alpha}_{\xi,\eta})=U^{\alpha}_{\uprho_{\alpha}^{-it} \xi,\uprho_{\alpha}^{-it}\eta}$, $t \in \br$. Consequently using the inequality \eqref{eq3} we have
	\begin{equation}\begin{split}\label{eq44}
			\bigl|\la \zeta|\uprho_\alpha^{-1}\xi\ra - \omega_\lambda^{(1)}(U^{\alpha}_{\zeta,\uprho_\alpha^{-1}\xi})\bigr|&=
			\bigl|m_{\RR}\bigl(\RR\ni t \mapsto 
			\la  \zeta|\uprho_\alpha^{-1}\xi\ra - \omega_\lambda(U^{\alpha}_{\uprho_{\alpha}^{-it} \zeta,\uprho_\alpha^{-1-it }\xi})\in \CC\bigr)\bigr|\\
			&=
			\bigl|m_{\RR}\bigl(\RR\ni t \mapsto 
			\la \zeta|\uprho_\alpha^{-1}\xi\ra - \omega_\lambda(U^{\alpha}_{\zeta,\uprho_\alpha^{-1}\xi})\in \CC\bigr)\bigr|\\
			&=
			\bigl|\la \zeta|\uprho_\alpha^{-1}\xi\ra - \omega_\lambda(U^{\alpha}_{\zeta,\uprho_\alpha^{-1}\xi})\bigr|\le 
			\eps' \sqrt{\la \xi | \uprho_\alpha^{-1}\xi \ra \la \zeta | \uprho_\alpha^{-1} \zeta\ra }
	\end{split}\end{equation}
	for $\xi,\zeta$ in the same eigenspace of $\uprho_\alpha$ and all $\lambda\ge \lambda_0$. The next step is to obtain a bound for $\omega^{(2)}_\lambda$. Let again $\xi,\zeta\in 1_{\{c\}}(\uprho_\alpha)\msf{H}_\alpha$ for a fixed $c\in\operatorname{Sp}(\uprho_\alpha)$. Using \eqref{eq44} and the fact that $\omega^{(1)}_{\lambda}$ is a state invariant under scaling group we deduce that
	\begin{equation}\label{eq18}
		\begin{split}
			\bigl|\la \zeta|\uprho_\alpha^{-1}\xi\ra - \omega_\lambda^{(2)}(U^{\alpha}_{\zeta,\uprho_\alpha^{-1}\xi})\bigr|&\le 
			\tfrac{1}{2}\bigl(
			\bigl|\la \zeta|\uprho_\alpha^{-1}\xi\ra - \omega_\lambda^{(1)}(U^{\alpha}_{\zeta,\uprho_\alpha^{-1}\xi})\bigr|+
			\bigl|\la \zeta|\uprho_\alpha^{-1}\xi\ra - \omega_\lambda^{(1)}(R_{\GG}(U^{\alpha}_{\zeta,\uprho_\alpha^{-1}\xi}))\bigr|
			\bigr)\\
			&=
			\tfrac{1}{2}\bigl(
			\bigl|\la \zeta|\uprho_\alpha^{-1}\xi\ra - \omega_\lambda^{(1)}(U^{\alpha}_{\zeta,\uprho_\alpha^{-1}\xi})\bigr|+
			\bigl|\la \zeta|\uprho_\alpha^{-1}\xi\ra - \omega_\lambda^{(1)}(U^{\alpha* }_{\uprho_\alpha^{-1}\xi,\zeta})\bigr|
			\bigr)\\
			&=
			\tfrac{1}{2}\bigl(
			\bigl|\la \zeta|\uprho_\alpha^{-1}\xi\ra - \omega_\lambda^{(1)}(U^{\alpha}_{\zeta,\uprho_\alpha^{-1}\xi})\bigr|+
			\bigl|\la \uprho_\alpha^{-1}\xi|\zeta\ra - \omega_\lambda^{(1)}(U^{\alpha }_{\uprho_\alpha^{-1}\xi,\zeta})\bigr|
			\bigr)\\
			&=
			\tfrac{1}{2}\bigl(
			\bigl|\la \zeta|\uprho_\alpha^{-1}\xi\ra - \omega_\lambda^{(1)}(U^{\alpha}_{\zeta,\uprho_\alpha^{-1}\xi})\bigr|+
			\bigl|\la \xi|\uprho_\alpha^{-1}\zeta\ra - \omega_\lambda^{(1)}(U^{\alpha }_{\xi,\uprho_\alpha^{-1}\zeta})\bigr|
			\bigr)\\
			&\le 
			\eps' \sqrt{\la \xi | \uprho_\alpha^{-1}\xi \ra \la \zeta | \uprho_\alpha^{-1} \zeta\ra }.
		\end{split}
	\end{equation}
	Now, pick $0<\delta<1$ so that $6\delta+\eps'\le \eps''$. For a fixed $c\in \operatorname{Sp}(\uprho_\alpha)$ choose a finite set $\{\theta_k\}_{k=1}^{N}$ in the sphere of $1_{\{c\}}(\uprho_\alpha)\msf{H}_\alpha$ which forms a $\tfrac{\delta}{\|\uprho_\alpha\|\|\uprho_\alpha^{-1}\|}$-net. Since $N<+\infty$, iterating \eqref{eq18} we can find $\lambda_1\ge \lambda_0$ (depending on $c $) so that for all $1\le k,k'\le N, \lambda\ge \lambda_1$ we have
	\begin{equation}
		\bigl|\la \theta_k|\uprho_\alpha^{-1}\theta_{k'}\ra - \omega_\lambda^{(2)}(U^{\alpha}_{\theta_k,\uprho_\alpha^{-1}\theta_{k'}})\bigr|\le 
		\eps' \sqrt{\la \theta_k | \uprho_\alpha^{-1}\theta_k \ra \la \theta_{k'} | \uprho_\alpha^{-1} \theta_{k'}\ra }.
	\end{equation}
	Now choose any norm $1$ vector $\theta\in 1_{\{c\}}(\uprho_\alpha) \msf{H}_\alpha$ and $1\le k\le N$ so that $\|\theta-\theta_k\|\le \tfrac{\delta}{\|\uprho_\alpha\|\|\uprho_\alpha^{-1}\|}$. We obtain
	\begin{equation}\begin{split}\label{eq7}
			\bigl|\la \theta|\uprho_\alpha^{-1}\theta\ra - \omega_\lambda^{(2)}(U^{\alpha}_{\theta,\uprho_\alpha^{-1}\theta})\bigr|&\le 
			4\|\theta-\theta_k\| \|\uprho_\alpha^{-1}\|+
			\bigl|\la \theta_k|\uprho_\alpha^{-1}\theta_{k}\ra - \omega_\lambda^{(2)}(U^{\alpha}_{\theta_k,\uprho_\alpha^{-1}\theta_{k}})\bigr|\\
			&\le 
			4\tfrac{\delta}{\|\uprho_\alpha\|}+\eps' \la \theta_k | \uprho_\alpha^{-1}\theta_k\ra \le 
			4\tfrac{\delta}{\|\uprho_\alpha\|}+\eps' ( 2\tfrac{\delta}{\|\uprho_\alpha\|}+ \la \theta|\uprho_\alpha^{-1} \theta \ra )\\
			&\le 6\tfrac{\delta}{\|\uprho_\alpha\|} + \eps' \la \theta|\uprho_\alpha^{-1}\theta \ra.
	\end{split}\end{equation}
	Inequality \eqref{eq7} holds for fixed $c$, but as these come from a finite set $\oon{Sp}(\uprho_\alpha)$, we can find $\lambda_2 \in \Lambda$ such that the above inequality holds for all $c\in \operatorname{Sp}(\uprho_\alpha)$, $\lambda \geq \lambda_2$.
	
	We already proved that $f_\lambda p_\alpha=(\id\otimes \omega_\lambda^{(2)})(\ww^{\whG *}) p_\alpha$ is self-adjoint. Now we claim that 
	\[
	f_\lambda p_\alpha \ge (1-\eps'') p_\alpha.
	\]
	We can think of both operators as acting on $\msf{H}_\alpha$. Observe that the operator $f_\lambda p_\alpha$ is block-diagonal, i.e.~for $c\in \operatorname{Sp}(\uprho_\alpha)$ we have $(f_\lambda p_\alpha) 1_{\{c\}}(\uprho_\alpha)=1_{\{c\}}(\uprho_\alpha) (f_\lambda p_\alpha)$ -- this follows from the fact that $f_\lambda$ is invariant under the scaling group. Let $\mu\in \RR$ be an eigenvalue of $f_\lambda p_\alpha$. Then $\mu\le 1$. Since $f_\lambda p_\alpha$ is block-diagonal, we can find a corresponding eigenvector $\theta\in \msf{H}_\alpha$ of norm $1$ which is included in some $1_{\{c\}}(\uprho_\alpha)\msf{H}_\alpha$. Then
	\[
	\mu=\la \theta | (f_\lambda p_\alpha) \theta \ra =
	(\omega_\theta\otimes \omega_{\lambda}^{(2)})(\ww^{\whG *})=
	(\omega_{\lambda}^{(2)}\otimes \omega_\theta)(\ww^{\GG})=
	\omega_\lambda^{(2)}(U^{\alpha}_{\theta,\theta}),
	\] 
	hence using \eqref{eq7}
	\[
	|\mu - 1 | =
	|\omega_\lambda^{(2)}(U^{\alpha}_{\theta,\theta})-\la \theta | \theta \ra |=c
	|\omega_\lambda^{(2)}(U^{\alpha}_{\theta,\uprho_{\alpha}^{-1}\theta})-\la \theta | \uprho_{\alpha}^{-1}\theta \ra | \le 
	c\bigl(6\tfrac{\delta}{\|\uprho_\alpha\|} + \eps' \la \theta |\uprho_{\alpha}^{-1} \theta \ra \bigr)\le 6\delta  + \eps'\le \eps''. 
	\]
	Consequently for $\lambda\ge\lambda_2$
	\[
	f_\lambda p_\alpha\textnormal{ is self-adjoint},\quad\operatorname{Sp}(f_\lambda p_\alpha)\subset [1-\eps'',1]\quad\Rightarrow\quad f_\lambda p_\alpha \ge (1-\eps'') p_\alpha.
	\]
	As $0<\eps''<1$ and $f_\lambda$ are normalised and positive definite, this gives us a contradiction.
\end{proof}

Proposition \ref{prop5} holds in the full generality of (possibly non-Kac type) compact quantum groups. Its downside is however that it does not say anything about $\LL^{\infty}(\GG)$ as a von Neumann algebra if we forget about the quantum group structure, as we had to assume that maps $\Phi_\lambda$ are associated to multipliers from the space $B_\lambda(\whG)$. Another downside is that we have control only over the $\LL^2$-norm of the operator $x$ in the statement, not the operator norm. In the following results we will prove different separating results, which are  similar in spirit, but where these problems will be remedied --  under additional assumptions. First let us introduce some convenient terminology.

Recall that for any von Neumann algebra $\M$ we have a canonical completely isometric identification $\CB(\M)=(\M_*\widehat{\otimes}\M)^*$, where $\widehat{\otimes}$ is the projective tensor product of operator spaces (\cite[Corollary 7.1.5]{EffrosRuan}).

\begin{deft}\label{def1}
	Let $\M$ be a von Neumann algebra and  $\eps\in (0,1)$.
	\begin{itemize}
		\item We say that $\M$ has the \emph{$\eps$-separation property} if for every net $(\Phi_\lambda)_{\lambda\in \Lambda}$ of normal, finite rank, UCP maps on $\M$ there is $x\in \M$ and $\omega\in \M_*$ with $\|x\|=\|\omega\|=1$ such that $\limsup_{\lambda\in \Lambda} |\la x-\Phi_\lambda(x),\omega\ra|> \eps$.
		\item We say that $\M$ has the \emph{matrix $\eps$-separation property} if for every net $(\Phi_\lambda)_{\lambda\in \Lambda}$ of normal, finite rank, UCP maps on $\M$ there is a Hilbert space $\Hil$ and $x\in \M\bar{\otimes }\B(\Hil),\omega\in (\M\bar{\otimes} \B(\Hil))_*$ with $\|x\|=\|\omega\|=1$ such that $\limsup_{\lambda\in \Lambda} |\la x-(\Phi_\lambda\otimes \id)(x),\omega\ra|> \eps$.
	\end{itemize}
\end{deft}

We begin with some easy observations.

\begin{rem}\label{rem:sep}
	Fix $\eps \in (0,1)$.
 If $\M$ has the $\eps$-separation property, then it has the $\eps'$-separation property for all $0<\eps'<\eps$. The analogous statement holds for the matrix $\eps$-separation property. 
 Further, the matrix $\eps$-separation property, formally weaker than the  $\eps$-separation property, implies that $\M$ does not have $w^*$-CPAP.
\end{rem}

The next lemma shows that in the definition of the matrix $\eps$-separation property one can replace $\B(\Hil)$ by matrices (or arbitrary von Neumann algebras), which justifies the proposed terminology. Furthermore, we provide an equivalent formulation of this property which does not refer to any additional von Neumann algebra.

\begin{lem}
	Let $\M$ be a von Neumann algebra, $\eps\in (0,1)$ and $(\Phi_\lambda)_{\lambda\in \Lambda}$ a net of normal UCP maps on $\M$. The following are equivalent:
	\begin{enumerate}
		\item there is a natural number $n\in\NN$, $x\in \M\otimes \M_n$ and $\omega\in (\M\otimes \M_n)_*$ with $\|x\|=\|\omega\|=1$ such that $\limsup_{\lambda\in \Lambda} |\la x-(\Phi_\lambda\otimes \id)(x),\omega\ra|> \eps$;
		\item there is a Hilbert space $\Hil$, $x\in \M\bar{\otimes }\B(\Hil)$ and $\omega\in (\M\bar{\otimes} \B(\Hil))_*$ with $\|x\|=\|\omega\|=1$ such that $\limsup_{\lambda\in \Lambda} |\la x-(\Phi_\lambda\otimes \id)(x),\omega\ra|> \eps$;
		\item there is a von Neumann algebra $\N$, $x\in \M\bar{\otimes }\N$ and $\omega\in (\M\bar{\otimes} \N)_*$ with $\|x\|=\|\omega\|=1$ such that $\limsup_{\lambda\in \Lambda} |\la x-(\Phi_\lambda\otimes \id)(x),\omega\ra|> \eps$;
		\item there is $\Omega\in \M_*\widehat{\otimes} \M$ with $\|\Omega\|=1$ such that $\limsup_{\lambda\in \Lambda} |\la \id- \Phi_\lambda, \Omega\ra|> \eps$.
	\end{enumerate}
Consequently, $\M$ has the matrix $\eps$-separation property if, and only if for every net $(\Phi_\lambda)_{\lambda\in \Lambda}$ of normal, finite rank, UCP maps on $\M$ there is $\Omega\in \M_*\widehat{\otimes} \M$ with $\|\Omega\|=1$ such that $\limsup_{\lambda\in \Lambda}|\langle \id-\Phi_\lambda , \Omega\rangle|>\eps$.
\end{lem}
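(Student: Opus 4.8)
The plan is to prove the cyclic chain $(1)\Rightarrow(2)\Rightarrow(3)\Rightarrow(1)$ and, separately, the equivalence $(1)\Leftrightarrow(4)$; the concluding sentence of the lemma is then immediate, since the matrix $\eps$-separation property is by definition the validity of $(2)$ for every net of normal, finite rank, UCP maps, and we shall have shown $(2)\Leftrightarrow(4)$. The implications $(1)\Rightarrow(2)$ and $(2)\Rightarrow(3)$ need no argument: take $\Hil=\bc^n$, respectively $\N=\B(\Hil)$.

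For $(3)\Rightarrow(1)$ I would reduce to matrices in two steps. First, fixing a faithful normal representation $\N\subset\B(\Hil)$, one has $\M\,\bar{\otimes}\,\N\subset\M\,\bar{\otimes}\,\B(\Hil)$ as von Neumann subalgebras, so $(\M\,\bar{\otimes}\,\N)_*$ is a metric quotient of $(\M\,\bar{\otimes}\,\B(\Hil))_*$; hence $\omega$ extends, up to an arbitrarily small change of norm, to a normal functional $\tilde\omega$ on $\M\,\bar{\otimes}\,\B(\Hil)$. Since $\Phi_\lambda$ is normal, $\Phi_\lambda\otimes\id_{\B(\Hil)}$ restricts to $\Phi_\lambda\otimes\id_\N$ on $\M\,\bar{\otimes}\,\N$, so the pairing is unchanged and $\limsup_\lambda|\langle x-(\Phi_\lambda\otimes\id_{\B(\Hil)})(x),\tilde\omega\rangle|>\eps$. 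Second, again because $\tilde\omega$ is normal, $\|\tilde\omega-\tilde\omega(\,(\I\otimes p)\cdot(\I\otimes p)\,)\|$ is as small as we wish for a suitable finite rank projection $p\in\B(\Hil)$; writing $n=\operatorname{rank}(p)$, identifying $p\B(\Hil)p\cong\M_n$, and replacing $x$ by $(\I\otimes p)x(\I\otimes p)\in\M\otimes\M_n$ and $\omega$ by the restriction of $\tilde\omega(\,(\I\otimes p)\cdot(\I\otimes p)\,)$ to $\M\otimes\M_n$, one checks — using normality of $\Phi_\lambda$ to commute $\I\otimes p$ past $\Phi_\lambda\otimes\id$ — that the controlling $\limsup$ moves only by $2\|\tilde\omega-\tilde\omega(\,(\I\otimes p)\cdot(\I\otimes p)\,)\|$. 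Taking all approximations small enough and rescaling $x$ and $\omega$ to norm $1$ (which can only increase the $\limsup$) yields $(1)$.

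The implication $(1)\Rightarrow(4)$ is essentially a tautology: writing $\omega=[\omega_{kl}]\in\M_n(\M_*)=(\M\otimes\M_n)_*$ and $x=[x_{kl}]\in\M_n(\M)$, for every $T\in\CB(\M)$ one has $\langle(T\otimes\id_{\M_n})(x),\omega\rangle=\sum_{k,l}\omega_{kl}(T(x_{kl}))=\langle T,\sum_{k,l}\omega_{kl}\otimes x_{kl}\rangle$ under the duality $\CB(\M)=(\M_*\widehat{\otimes}\M)^*$. Thus $\Omega_0:=\sum_{k,l}\omega_{kl}\otimes x_{kl}$ lies already in the algebraic tensor product $\M_*\odot\M$, satisfies $\|\Omega_0\|\le\|x\|\,\|\omega\|=1$ (because $\|T\otimes\id_{\M_n}\|\le\|T\|_{\mathrm{cb}}$) and $\limsup_\lambda|\langle\id-\Phi_\lambda,\Omega_0\rangle|>\eps$; rescaling $\Omega_0$ gives $(4)$.

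The only implication requiring real work is $(4)\Rightarrow(1)$, and the hard part there — indeed the crux of the whole lemma — is to descend from the infinite dimensional object $\Omega$ to a genuine finite matrix witness while keeping the norms of $x$ and $\omega$ equal to $1$. The idea is: given $\Omega$ with $\|\Omega\|=1$ and $\delta>0$ small, approximate $\Omega$ in $\M_*\widehat{\otimes}\M$ by some $\Omega'$ in the algebraic tensor product, and then invoke the Effros--Ruan description of the operator space projective tensor norm (\cite{EffrosRuan}) to represent $\Omega'=\alpha(f\otimes a)\beta$ with $\alpha\in\M_{1,pq}(\bc)$, $\beta\in\M_{pq,1}(\bc)$, $f=[f_{ij}]\in\M_p(\M_*)$, $a=[a_{kl}]\in\M_q(\M)$ and $\|\alpha\|\,\|f\|_{\M_p(\M_*)}\,\|a\|_{\M_q(\M)}\,\|\beta\|$ arbitrarily close to $1$; rescale so that $\|a\|_{\M_q(\M)}\le1$, $\|f\|_{\M_p(\M_*)}\le1$, $\|\alpha\|\,\|\beta\|\le1+\delta$. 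Now take $n=q$, $x=a\in\M\otimes\M_q$, and define $\omega\in(\M\otimes\M_q)_*$ by $\omega(z)=\alpha\bigl((f\otimes\id_{\M_q})(z)\bigr)\beta$, where $f$ is read as the completely bounded map $\M\to\M_p$, $x\mapsto[f_{ij}(x)]$. Because the operator space structure of $\M_*$ is defined precisely so that $\|f\|_{\M_p(\M_*)}=\|f\|_{\CB(\M,\M_p)}$, one obtains $\|\omega\|\le\|\alpha\|\,\|f\|_{\M_p(\M_*)}\,\|\beta\|\le1+\delta$, while $\|x\|\le1$; and since $\Phi_\lambda\otimes\id_{\M_q}$ commutes with slicing by normal functionals, the Effros--Ruan formula for the duality pairing gives $\langle x-(\Phi_\lambda\otimes\id_{\M_q})(x),\omega\rangle=\langle\id-\Phi_\lambda,\Omega'\rangle$ for all $\lambda$. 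Using $\|\id-\Phi_\lambda\|_{\mathrm{cb}}\le2$ this places $\limsup_\lambda|\langle x-(\Phi_\lambda\otimes\id_{\M_q})(x),\omega\rangle|$ within $2\delta$ of $\limsup_\lambda|\langle\id-\Phi_\lambda,\Omega\rangle|>\eps$, and after rescaling $x,\omega$ to norm $1$ (a change by the harmless factor $(\|x\|\,\|\omega\|)^{-1}\ge(1+\delta)^{-1}$) we recover $(1)$. The reason a naive realisation — writing $\Omega'$ as a plain finite sum $\sum_k\phi_k\otimes a_k$ and taking $x=\operatorname{diag}(a_k)$, $\omega=\sum_k\phi_k\otimes e_{kk}$ — does \emph{not} work is that such a sum is in general hopelessly inefficient: the norms of this $x$ and $\omega$ are not controlled by $\|\Omega'\|$, and only the matricial amplification $\alpha(f\otimes a)\beta$ intrinsic to the projective tensor norm keeps them near $1$. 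Keeping track of the $\eps$-budget through these successive approximations is the main piece of bookkeeping, but presents no conceptual difficulty.
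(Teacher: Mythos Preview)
Your argument is correct and follows essentially the same outline as the paper's proof: the trivial implications $(1)\Rightarrow(2)\Rightarrow(3)$, a reduction from $\N$ to $\B(\Hil)$ to $\M_n$ for $(3)\Rightarrow(1)$, the tautological passage $(1)\Rightarrow(4)$ via $\Omega_{\omega,x}(T)=\langle(T\otimes\id)(x),\omega\rangle$, and an Effros--Ruan representation for the reverse. Two small remarks: in $(3)\Rightarrow(1)$ the extension of a normal functional to a von Neumann superalgebra can be taken with \emph{equal} norm, not just approximately equal; and your route back from $(4)$ differs from the paper's in one respect worth noting. The paper proves $(4)\Rightarrow(2)$ directly by invoking the \emph{infinite}-matrix representation $\Omega=\alpha(\beta\otimes\gamma)\alpha'$ with $\gamma\in\K_\infty(\M)\subset\M\bar\otimes\B(\ell^2)$ (\cite[Theorem~10.2.1]{EffrosRuan}, following \cite[Proposition~3.9]{DKV}), landing in $\B(\ell^2)$ without any preliminary approximation; you instead first approximate $\Omega$ by an element of the algebraic tensor product and then use the \emph{finite}-matrix Effros--Ruan description to land directly in $(1)$. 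Your version trades one approximation step for a slightly more elementary tool and yields the matrix witness in finite dimensions in one go, whereas the paper's version avoids the approximation but relies on the already established equivalence $(2)\Leftrightarrow(1)$ to reach finite matrices. Both are valid and the bookkeeping you flag (choosing the slack $\eta=\limsup_\lambda|\langle\id-\Phi_\lambda,\Omega\rangle|-\eps>0$ and then $\delta,\delta'$ small relative to $\eta$) goes through exactly as you indicate.
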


\begin{proof}[Sketch of a proof]
	Implications $(1)\Rightarrow (2)\Rightarrow (3)$ are trivial. To see that $(3)$ implies $(1)$, observe first that it is enough to consider $\N=\B(\Hil)$, as normal functionals can be extended to superalgebras without increasing norms. Next, use the identification $(\M\bar{\otimes}\B(\Hil))_*=\M_*\widehat{\otimes} \B(\Hil)_*$, where $\widehat{\otimes}$ is the projective operator space tensor product (see \cite[Theorem 7.2.4]{EffrosRuan}) and approximate $\omega$ by a finite sum of simple tensors. To finish the proof, use the fact that any normal functional on $\B(\Hil)$ is given by a $\Tr(T\cdot)$ for a trace class operator $T$ and finite rank operators are dense in the space of trace class operators.	

Next we show that $(1)\Rightarrow(4)\Rightarrow(2)$. 

Assume that $(1)$ holds with $x\in \M\otimes\M_n,\omega\in (\M\otimes\M_n)_*$. Define $\Omega_{\omega,x}\in \CB(\M)^*$ via $\langle \Omega_{\omega,x},T\rangle = \langle (T\otimes \id)(x),\omega\rangle $, $T \in \CB(\M)$.  The functional $\Omega_{\omega,x}$ cannot be zero, as then we would have 
$\la x-(\Phi_\lambda\otimes \id)(x),\omega\ra =0$ for each $\lambda \in \Lambda$. On the other hand $\|\Omega_{\omega,x}\|\le \|x\|\|\omega\|=1$.  Since $\M_n$ is finite dimensional, we can write $\omega=\sum_{i=1}^{N} \omega_{1,i}\otimes \omega_{2,i}$ for some $\omega_{1,i}\in \M_*, \omega_{2,i}\in \M_n^*$, $1\le i \le N$. We have
	\[
	\langle \Omega_{\omega,x} , T \rangle 
	=\sum_{i=1}^{N} \langle (T\otimes \id)(x) , \omega_{1,i}\otimes \omega_{2,i}\rangle = 
	\sum_{i=1}^{N} \langle T( (\id\otimes \omega_{2,i})(x)) , 
	\omega_{1,i}\rangle \quad(T\in \CB(\M)),
	\]
which shows that $\Omega_{\omega,x}\in \M_*\odot \M\subset \M_*\widehat{\otimes} \M$. To finish this part of proof, define $\Omega=\tfrac{\Omega_{\omega,x}}{\|\Omega_{\omega,x}\|}$.

Implication $(4)\Rightarrow(2)$ follows closely proof of \cite[Proposition 3.9]{DKV}, hence we provide only a sketch. Assume that we are given $\Omega\in \M_*\widehat{\otimes}\M$ as in $(4)$, and let $\eps'$ be such that $\limsup_{\lambda\in \Lambda} |\langle \id-\Phi_\lambda,\Omega\rangle |>\eps'>\eps$. According to \cite[Theorem 10.2.1]{EffrosRuan} we can find infinite matrices $\alpha\in\M_{1,\infty\times\infty}, \beta\in \K_\infty(\M_*),\gamma\in\K_\infty(\M),\alpha'\in \M_{\infty\times\infty,1}$ such that ${\Omega=\alpha (\beta\otimes \gamma)\alpha'}$ and $\|\alpha\|\|\beta\|\|\gamma\|\|\alpha'\|\le \tfrac{\eps'}{\eps}$. Write these matrices as $\alpha=[\alpha_{1,(i,j)}]_{(i,j)\in \NN^{\times 2}}$, etc.~(so that $\Omega=\sum_{i,j,k,l=1}^{\infty} \alpha_{1,(i,j)} (\beta_{i,k}\otimes \gamma_{j,l}) {\alpha'}_{(k,l),1}$) and let $e_{i,j}\,(i,j\in\NN)$ be the matrix units in $\B(\ell^2)$. One can check that $[e_{j,i}]_{i,j=1}^{\infty}$ is a well defined infinite matrix of norm $1$ in $\M_\infty(T(\ell^2))$, where $T(\ell^2)\simeq \B(\ell^2)_*$ is the space of trace class operators. Notice that $\gamma\in \K_\infty(\M)=\M\otimes \K_\infty\subset \M\bar{\otimes} \B(\ell^2)$ and define $\omega_0=\alpha (\beta \otimes [e_{j,i}]_{j,i=1}^{\infty})\alpha'\in \M_*\widehat{\otimes} \B(\ell^2)_*=(\M\bar{\otimes}\B(\ell^2))_*$. Unwinding the definitions, one finds that $\langle \gamma-(\Phi_\lambda\otimes\id)(\gamma),\omega_0\rangle=\langle\id-\Phi_\lambda , \Omega\rangle$. Setting $x=\tfrac{\gamma}{\|\gamma\|},\omega=\tfrac{\omega_0}{\|\omega_0\|}$ finishes the proof, as $\|\gamma\| \cdot \|\omega_0\|\le \tfrac{\eps'}{\eps}$, so that $|\la x-(\Phi_\lambda\otimes \id)(x),\omega\ra| \geq \tfrac{\eps}{\eps'} |\langle\id-\Phi_\lambda , \Omega\rangle|$ for each $\lambda \in \Lambda$.
\end{proof}

\begin{prop}\label{prop:someepsilon}
Let $\M$ be a non-injective von Neumann algebra. Then $\M$ has the $\eps$-separation property for some $0<\eps<1$.
\end{prop}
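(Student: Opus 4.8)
The plan is to establish the contrapositive: if $\M$ fails the $\eps$-separation property for \emph{every} $\eps\in(0,1)$, then $\M$ admits a net of normal, finite rank, UCP maps converging to the identity in the point-ultraweak topology, hence has the $w^*$-CPAP and is therefore injective by \cite{Connes}. The starting observation is that the negation of ``$\M$ has the $\eps$-separation property'' is exactly the \emph{existence} of a net $(\Phi_\lambda)_{\lambda\in\Lambda}$ of normal, finite rank, UCP maps on $\M$ with $\limsup_{\lambda}|\la x-\Phi_\lambda(x),\omega\ra|\le\eps$ for all $x\in\M$, $\omega\in\M_*$ satisfying $\|x\|=\|\omega\|=1$; so under our assumption such a net is available for each $\eps$.

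Applying this with $\eps=\tfrac1n$ for every integer $n\ge2$ yields a net $(\Phi^{(n)}_\lambda)_{\lambda\in\Lambda_n}$ of normal, finite rank, UCP maps with $\limsup_{\lambda\in\Lambda_n}|\la x-\Phi^{(n)}_\lambda(x),\omega\ra|\le\tfrac{\|x\|\,\|\omega\|}{n}$ for all $x\in\M$, $\omega\in\M_*$ (the unit-sphere estimate upgrades to arbitrary $x,\omega$ by homogeneity). Since ``$\limsup\le c$'' means the quantity is eventually strictly below $c+\delta$, and a finite intersection of eventually-satisfied conditions indexed by a directed set is again eventually satisfied, for every finite $F\subset\M$, finite $G\subset\M_*$ and $\delta>0$ one can find $\lambda\in\Lambda_n$ with
\[
|\la x-\Phi^{(n)}_\lambda(x),\omega\ra|\le\tfrac{\|x\|\,\|\omega\|}{n}+\delta \qquad (x\in F,\ \omega\in G).
\]

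Next I would diagonalise. Let $I$ be the set of quadruples $i=(n,F,G,\delta)$ with $n\ge2$ an integer, $F\subset\M$ finite, $G\subset\M_*$ finite and $\delta>0$, ordered by $(n,F,G,\delta)\le(n',F',G',\delta')$ iff $n\le n'$, $F\subseteq F'$, $G\subseteq G'$ and $\delta\ge\delta'$; this is a directed set. For each $i\in I$ put $\Psi_i:=\Phi^{(n)}_\lambda$ where $\lambda\in\Lambda_n$ is chosen as in the preceding step, so that each $\Psi_i$ is normal, finite rank and UCP. Given $x\in\M$, $\omega\in\M_*$ and $\eta>0$, pick an integer $n\ge2$ with $\tfrac{\|x\|\,\|\omega\|}{n}<\tfrac\eta2$ and set $i_0:=(n,\{x\},\{\omega\},\tfrac\eta2)\in I$; then $|\la x-\Psi_i(x),\omega\ra|<\eta$ for all $i\ge i_0$, so $\Psi_i(x)\to x$ ultraweakly for every $x\in\M$. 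Thus $\M$ has the $w^*$-CPAP, contradicting non-injectivity, which completes the argument.

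I do not anticipate a real obstacle: the argument is pure bookkeeping. The only points needing care are (i) spelling out that the negation of the $\eps$-separation property genuinely hands us, for each $\eps$, a net with the stated $\limsup$ bound for \emph{all} unit $x$ and $\omega$; and (ii) arranging the directed set $I$ so that passing to later indices simultaneously sends $n\to\infty$, exhausts the elements of $\M$ and $\M_*$ being tested, and forces the tolerance $\delta\to0$ — using the elementary fact that ``$\limsup\le c$'' is stable under finite intersections of tails, which is what allows a single $\lambda\in\Lambda_n$ to control all the finitely many pairs in $F\times G$ at once.
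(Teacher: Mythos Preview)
Your proof is correct and follows essentially the same approach as the paper's own proof: both argue by contraposition and build a diagonal net indexed by finite subsets of $\M$ and $\M_*$ together with a decreasing tolerance parameter, selecting from each approximating net a map that handles the finitely many test pairs. The only cosmetic difference is that the paper uses $(F,G,\eps)$ with $\eps\in(0,1)$ directly (and a bound $2\eps$), whereas you discretise to $\eps=1/n$ and introduce an explicit slack $\delta$; this is immaterial.
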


\begin{proof}
Assume by contradiction that $\M$ does not have $\eps$-separation property for any $0<\eps<1$. That is, for all $0<\eps<1$ there is a net $(\Phi_{\eps,\lambda})_{\lambda\in \Lambda_\eps}$ of normal, finite rank UCP maps such that
\begin{equation}\label{eq22}
\limsup_{\lambda\in \Lambda_\eps} \bigl|\bigl\la x-\Phi_{\eps,\lambda}(x),\omega\bigr\ra\bigr|\le \eps
\end{equation}
for all $x\in \M,\omega\in \M_*$ with $\|x\|=\|\omega\|=1$. Next we construct a new net of normal, finite rank, UCP maps on $\M$, $(\Psi_{F,G,\eps})_{(F,G,\eps)}$ where $F\subset \M,G\subset \M_*$ are finite non-empty sets and $0<\eps<1$. We declare $(F,G,\eps)\le (F',G',\eps')$ if and only if $F\subset F', G\subset G'$ and $\eps \ge \eps'$. For such a triple we choose $\Psi_{F,G,\eps}=\Phi_{\eps,\lambda}$ with $\lambda\in \Lambda_\eps$ such that
\[
\bigl|\bigl\la x-\Psi_{F,G,\eps}(x),\omega\bigr\ra\bigr|=
\bigl|\bigl\la x-\Phi_{\eps,\lambda}(x),\omega\bigr\ra\bigr|\le 2\eps \|x\|\|\omega\| \quad(x\in F, \omega\in G).
\]
The index $\lambda$ as above exists due to \eqref{eq22}. Now it is easy to see that the net $(\Psi_{F,G,\eps})_{(F,G,\eps)}$ implements the $w^*$-CPAP of $\M$, which gives us a contradiction.
\end{proof}

In view of the above the focus will from now on be on finding for a non-injective von Neumann algebra an explicit set of $\eps\in (0,1)$ for which the $\eps$-separation property holds. We do not know whether the $\eps$-separation property in fact depends on $\eps$; see the discussion in the end of the paper.

For a compact quantum group $\GG$, let us denote $\bf{N}_{\GG}=\sup_{\alpha\in \Irr(\GG)} \dim(\alpha)\in \NN\cup\{+\infty\}$. 

\begin{tw}\label{thm2}
	Let $\GG$ be a compact quantum group such that $\bf{N}_{\GG}<+\infty$. If $\GG$ is not coamenable, then $\LL^{\infty}(\GG)$ has the $\eps$-separation property for all $0<\eps<\tfrac{1}{\bf{N}_{\GG}}$.
\end{tw}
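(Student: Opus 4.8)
The plan is to prove the contrapositive: assuming $\LL^{\infty}(\GG)$ does \emph{not} have the $\eps$-separation property for some $\eps\in(0,\tfrac1{\bf{N}_\GG})$, we must show that $\GG$ is coamenable. So fix such an $\eps$ and a net $(\Phi_\lambda)_{\lambda\in\Lambda}$ of normal, finite rank, UCP maps on $\LL^{\infty}(\GG)$ witnessing the failure of the property, i.e.\ $\limsup_{\lambda}|\la x-\Phi_\lambda(x),\omega\ra|\le\eps$ for all norm-one $x\in\LL^\infty(\GG)$, $\omega\in\LL^1(\GG)$.

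The first main step is the \emph{Haagerup trick}: replace each $\Phi_\lambda$ by a quantum Herz-Schur multiplier $\Psi_\lambda=\Theta^l(f_\lambda)$ with $f_\lambda\in B_\lambda(\whG)$, without destroying the approximate-identity-in-$L^2$ behaviour. Concretely, I would first use that, since $\bf{N}_\GG<+\infty$, each $\Phi_\lambda$ can be perturbed (this is the place the dimension bound enters) so that it, or a finite-rank UCP map built from it via averaging over the scaling group and composing with $\tfrac12(\id+R_\GG)$ as in the proof of Proposition \ref{prop5}, becomes a quantum Herz-Schur multiplier associated to a \emph{finitely supported} self-adjoint normalised positive-definite function $f_\lambda\in\mrm{c}_c(\whG)$; the averaging arguments \eqref{eq44}, \eqref{eq18}, \eqref{eq7} of Proposition \ref{prop5} go through and show the perturbation changes the relevant matrix-coefficient estimates by a controlled amount. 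The dimension bound $\bf{N}_\GG<+\infty$ is exactly what turns the failure-of-$\eps$-separation estimate (which only controls $\|x\|_2$ in Proposition \ref{prop5}) into an operator-norm-level statement: for an irreducible $\alpha$ with $\dim(\alpha)\le\bf{N}_\GG$ the $\ell^2$- and $\ell^\infty$-norms on the (finite-dimensional, uniformly bounded-dimension) block $p_\alpha\LL^\infty(\GG)$ are comparable with constants depending only on $\bf{N}_\GG$ and the $\uprho$-operators, so controlling $|\la x-\Phi_\lambda(x),\omega\ra|$ for all norm-one $x,\omega$ forces the same kind of conclusion $f_\lambda p_\alpha\ge(1-\eps''')p_\alpha$ that was obtained in Proposition \ref{prop5}, now with a constant $1-\eps'''>1-\bf{N}_\GG\,\eps'''$-type bound that stays positive precisely because $\eps<\tfrac1{\bf{N}_\GG}$.

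The second main step is then to feed the resulting net of finitely supported normalised positive-definite functions $(f_\lambda)_\lambda\subset B_\lambda(\whG)$ into Corollary \ref{cor3}(2), applied to the discrete quantum group $\whG$ with $A=\mrm{c}_c(\whG)$. The estimates from the previous step give exactly the eventual pointwise inequality $p_\alpha f_\lambda\ge (1-\bf{N}_\GG\,\eps'')\, p_\alpha$ for each $\alpha\in\Irr(\GG)$, with $1-\bf{N}_\GG\,\eps''>0$; this is condition \eqref{eq19} for $\whG$ with the positive constant $1-\bf{N}_\GG\,\eps''$ in place of $\eps$. Corollary \ref{cor3}(2) then yields that the double dual $\widehat{\whG}=\GG$ is coamenable, which is the desired conclusion. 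I expect the bookkeeping here — carefully tracking how the perturbation constants $\delta,\eps',\eps''$ must be chosen in terms of $\eps$ and $\bf{N}_\GG$, and the passage through possibly several subnets — to be routine given Proposition \ref{prop5}, whose proof already contains the essential mechanism.

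The main obstacle, and the genuinely new content compared with Proposition \ref{prop5}, is the very first reduction: turning an \emph{arbitrary} net of finite-rank UCP maps into quantum Herz-Schur multipliers while keeping the quantitative separation estimate and while using only the hypothesis $\bf{N}_\GG<+\infty$ (not coamenability, which is what we are trying to prove). The key point is that finite rank forces each $\Phi_\lambda$ to act nontrivially only on finitely many blocks $p_\alpha\LL^\infty(\GG)$, each of dimension at most $\bf{N}_\GG^2$, so the averaged functionals $\omega_\lambda^{(2)}$ remain normal states supported on a finite set $F_\lambda\subset\Irr(\GG)$; combined with the uniform dimension bound this makes the comparison between the map-level estimate and the positive-definite-function-level estimate uniform in $\alpha$. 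Once this is set up, the argument closes exactly as in Proposition \ref{prop5} followed by Corollary \ref{cor3}.
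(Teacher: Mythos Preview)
Your overall strategy is correct --- contrapositive, reduce to Herz--Schur multipliers, then feed into Proposition~\ref{prop5}/Corollary~\ref{cor3} --- but the execution of the first reduction contains a genuine gap. The claim in your last paragraph that ``finite rank forces each $\Phi_\lambda$ to act nontrivially only on finitely many blocks $p_\alpha\LL^\infty(\GG)$'' is simply false: an arbitrary finite-rank UCP map has no reason to respect the block decomposition, so this cannot be the mechanism. Likewise, averaging over the scaling group and symmetrising via $\tfrac12(\id+R_\GG)$ are operations performed on \emph{states} inside the proof of Proposition~\ref{prop5}; they do not by themselves convert an arbitrary UCP map into a Herz--Schur multiplier.

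What the paper actually does is the following. First, it invokes \cite{KS} to deduce from $\mathbf{N}_\GG<+\infty$ that $\GG$ is of Kac type; this is essential and you do not mention it. Kac type makes available the normal UCP map $\Delta^\sharp$ (equation~\eqref{eq13}), and the Haagerup trick is then $\wt{\Phi}_\lambda=\Delta^\sharp(\Phi_\lambda\otimes\id)\Delta$, which is automatically a Herz--Schur multiplier $\Theta^l(a_\lambda)$ with $a_\lambda\in\A(\whG)$. The heart of the proof is a direct computation showing that the failure of $\eps$-separation for $(\Phi_\lambda)$ (with $\|x\|=\|\omega\|=1$ in operator/functional norm) implies, for the averaged maps $(\wt{\Phi}_{\lambda,n})$, the estimate $\limsup|\langle x-\wt{\Phi}_{\lambda,n}(x),\omega\rangle|\le\mathbf{N}_\GG\,\eps$ for the \emph{specific} pairs $x=U^\alpha_{\xi,\eta}/\|U^\alpha_{\xi,\eta}\|_2$, $\omega=h(U^{\alpha*}_{\zeta,\eta}\,\cdot)/\|h(U^{\alpha*}_{\zeta,\eta}\,\cdot)\|$. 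The factor $\mathbf{N}_\GG$ enters through the lower bound $\|h(U^{\alpha*}_{\zeta,\eta}\,\cdot)\|\ge\tfrac{\|\zeta\|^2\|\eta\|^2}{\dim(\alpha)\|U^\alpha_{\zeta,\eta}\|}\ge\tfrac{\|\zeta\|\,\|\eta\|}{\mathbf{N}_\GG}$, which is exactly the norm comparison you gesture at but do not identify precisely. Since the proof of Proposition~\ref{prop5} only ever used this particular class of test pairs (see the remark around \eqref{eq11}), one then concludes as there.
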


\begin{proof}
	By \cite[Theorem 4.3]{KS} we know that $\GG$ is of Kac type. Since $\GG$ is of Kac type, with any normal, finite rank UCP map $\Phi\colon \LL^{\infty}(\GG)\rightarrow \LL^{\infty}(\GG)$ we can associate a quantum Herz-Schur multiplier $\wt{\Phi}$ in a canonical way. As this construction is quite well-known, we will only present the relevant formulas. Consider the normal UCP map (\cite[Section 7.1]{Brannan}) 
	\begin{equation}\label{eq13}
		\Delta^\sharp\colon \LL^{\infty}(\GG)\bar{\otimes}\LL^{\infty}(\GG)\ni U^{\alpha}_{i,j}\otimes U^{\beta}_{k,l}\mapsto
		\delta_{\alpha,\beta}\delta_{j,k} \tfrac{U^{\alpha}_{i,l}}{\dim(\alpha)}\in \LL^{\infty}(\GG);
	\end{equation}
	in particular $\Delta^\sharp \Delta=\id$. Now define
	\[
	\wt{\Phi}=\Delta^\sharp(\Phi\otimes \id)\Delta\colon \LL^{\infty}(\GG)\rightarrow \LL^{\infty}(\GG).
	\]
	$\wt{\Phi}$ is a normal UCP map and $\wt{\Phi}=\Theta^l(a)$ for some $a\in \A(\whG)$ -- for the proof, see \cite[Section 6.3.2]{Brannan}.
	
	Take $0<\eps<\tfrac{1}{\bf{N}_{\GG}}$. Assume by contradiction that the claim does not hold, i.e.~there is a net $(\Phi_\lambda)_{\lambda\in \Lambda}$ of normal, UCP, finite rank maps $\LL^{\infty}(\GG)\rightarrow\LL^{\infty}(\GG)$ such that for all $x\in \LL^{\infty}(\GG),\omega\in \LL^1(\GG)$ with $\|x\|=\|\omega\|=1$ we have
	\begin{equation}\label{eq9}
		\limsup_{\lambda\in \Lambda} |\la x-\Phi_\lambda(x),\omega\ra |\le \eps.
	\end{equation}
	For each $\lambda \in \Lambda$  set
	\[
	\wt{\Phi}_\lambda=\Delta^\sharp( \Phi_\lambda\otimes \id)\Delta=\Theta^l(a_\lambda)
	\]
	for some multipliers $a_\lambda \in \A(\whG)$. We can write them as $a_\lambda=(\omega_\lambda\otimes \id)(\ww^{\GG })=(\id\otimes \omega_\lambda)(\ww^{\whG *})$ for vector states $\omega_\lambda=\omega_{\xi_\lambda}\in \LL^1(\GG)$, then $\wt{\Phi}_\lambda=(\omega_\lambda\otimes \id)\Delta$. For $n\in\mathbb{N}$ choose norm $1$ vectors $\xi_{\lambda,n}\in \Lambda_h(\operatorname{Pol}(\GG))$ such that $\|\xi_\lambda-\xi_{\lambda,n}\|\le \tfrac{1}{n}$ and let $\wt{\Phi}_{\lambda,n}=(\omega_{\xi_{\lambda,n}}\otimes \id)\Delta$. These are normal UCP quantum Herz-Schur multipliers. Furthermore, $\wt{\Phi}_{\lambda,n}$ are finite rank since $\xi_{\lambda,n}\in \Lambda_h(\operatorname{Pol}(\GG))$. We will obtain a contradiction with Proposition \ref{prop5}. For this, we need to show that for all $x\in \Pol(\GG),\omega\in\LL^1(\GG)$ with $\|x\|_2=\|\omega\|=1$ we have
	\begin{equation}\label{eq8}
		\limsup_{(\lambda,n)\in \Lambda\times \NN}
		|\la x-\wt{\Phi}_{\lambda,n}(x ),\omega \ra | \le \bf{N}_{\GG}\eps
	\end{equation}
	(note that ${\bf{N}}_{\GG}\eps<1$). However, an inspection of the proof shows that it is enough to prove \eqref{eq8} for $x=\tfrac{U^{\alpha}_{\xi,\eta}}{\|U^{\alpha}_{\xi,\eta}\|_2},\omega=\tfrac{h(U^{\alpha *}_{\zeta,\eta} \cdot )}{\|h(U^{\alpha *}_{\zeta,\eta} \cdot) \|}$ and fixed $\alpha\in \Irr(\GG),\xi,\eta,\zeta\in \msf{H}_\alpha\setminus\{0\}$. Indeed, the argument leading to \eqref{eq11} was the only place in the proof of Proposition \ref{prop5} where the assumption was used. Thus let us show \eqref{eq8} for this pair $x,\omega$. Fix $(\lambda,n)\in \Lambda\times \NN$ and note that
\begin{align*}
		|\la x - \wt{\Phi}_{\lambda,n}(x), \omega \ra| \le &
		|\la ((\omega_{\xi_\lambda}-\omega_{\xi_{\lambda,n}})\otimes \id)\Delta(x), \omega  \ra |+|
		\la  x - \wt{\Phi}_{\lambda}(x),\omega \ra |\\
		&\le 
		\tfrac{2\|x\|}{n} + 
		|\la  (  (\id-\Phi_\lambda)\otimes \id)\Delta(x),\omega\circ\Delta^\sharp\ra |
\end{align*}
Further 
\begin{align*}
	|\la  (  (\id&-\Phi_\lambda)\otimes \id)\Delta(x),\omega\circ\Delta^\sharp\ra | \\&= 
		\tfrac{1}{\|U^{\alpha}_{\xi,\eta}\|_2 \|h(U^{\alpha *}_{\zeta,\eta} \cdot )\|}
		\bigl|\sum_{m=1}^{\dim(\alpha)}
		\la 
		(U^{\alpha}_{\xi,\xi^\alpha_m}-
		\Phi_\lambda(U^{\alpha}_{\xi,\xi^\alpha_m}))
		\otimes  U^{\alpha}_{\xi^\alpha_m,\eta},
		h(U^{\alpha *}_{\zeta,\eta}\cdot )\circ\Delta^\sharp \ra \bigr|\\
		&= 
		\tfrac{1}{\|U^{\alpha}_{\xi,\eta}\|_2 \|h(U^{\alpha *}_{\zeta,\eta} \cdot )\|}
		\bigl|\sum_{m=1}^{\dim(\alpha)}
		\la (U^{\alpha}_{\xi,\xi^\alpha_m}
		-
		\sum_{a,b=1}^{\dim(\alpha)}\dim(\alpha) h( U^{\alpha *}_{\xi^\alpha_a,
			\xi^\alpha_b}\Phi_\lambda(U^{\alpha}_{\xi,\xi^{\alpha}_m}))U^{\alpha}_{\xi^\alpha_a,\xi^\alpha_b})
		\otimes U^{\alpha}_{\xi^\alpha_m,\eta},\\
		&\quad\quad\quad\quad\quad\quad\quad\quad
		\quad\quad\quad\quad\quad\quad\quad\quad
		\quad\quad\quad\quad\quad\quad\quad\quad
		\quad\quad\quad\quad\quad \quad
		h(U^{\alpha *}_{\zeta,\eta}\cdot )\circ\Delta^\sharp
		\ra \bigr|
		\\&=
		\tfrac{1}{\|U^{\alpha}_{\xi,\eta}\|_2 \|h(U^{\alpha *}_{\zeta,\eta} \cdot )\|}
		\bigl|\sum_{m=1}^{\dim(\alpha)}\tfrac{1}{\dim(\alpha)}
		\la 
		U^{\alpha}_{\xi,\eta}
		-\sum_{a=1}^{\dim(\alpha)}\dim(\alpha) h( U^{\alpha *}_{\xi^\alpha_a,
			\xi^\alpha_m}\Phi_\lambda(U^{\alpha}_{\xi,\xi^{\alpha}_{m}}))U^{\alpha}_{\xi^{\alpha}_a,\eta},
		h(U^{\alpha *}_{\zeta,\eta}\cdot ) \ra \bigr| 
		\\&=
		\tfrac{1}{\|U^{\alpha}_{\xi,\eta}\|_2 \|h(U^{\alpha *}_{\zeta,\eta} \cdot )\|}
		\bigl|
		\tfrac{\la \xi|\zeta\ra \|\eta\|^2}{\dim(\alpha)}
		-\sum_{m,a=1}^{\dim(\alpha)} h( U^{\alpha *}_{\xi^\alpha_a,
			\xi^\alpha_m}\Phi_\lambda(U^{\alpha}_{\xi,\xi^{\alpha}_{m}}))
		\tfrac{\la \xi^{\alpha}_a|\zeta\ra \| \eta\|^2 }{\dim(\alpha)}\bigr|
		\\	&=
		\tfrac{1}{\|U^{\alpha}_{\xi,\eta}\|_2 \|h(U^{\alpha *}_{\zeta,\eta} \cdot )\|}
		\bigl|
		\tfrac{\la \xi|\zeta\ra \|\eta\|^2}{\dim(\alpha)}
		-\sum_{m=1}^{\dim(\alpha)} h( U^{\alpha *}_{\zeta,\xi^\alpha_m}\Phi_\lambda(U^{\alpha}_{\xi,\xi^\alpha_m}))
		\tfrac{\|\eta\|^2  }{\dim(\alpha)} \bigr|
		\\&=
\tfrac{\|\eta\|^2}{\|\xi\|\|\eta\| \|h(U^{\alpha *}_{\zeta,\eta} \cdot )\|\dim(\alpha)^{1/2}}
		\bigl|
		\la \xi|\zeta\ra -\sum_{m=1}^{\dim(\alpha)} h( U^{\alpha *}_{\zeta,
			\xi^{\alpha}_{m}}\Phi_\lambda(U^{\alpha}_{\xi,\xi^\alpha_m})) \bigr|.
	\end{align*}
Combining the two formulas displayed above we obtain
	\[\begin{split}
		|\la x - \wt{\Phi}_{\lambda,n}(x),\omega \ra| &\le 
		\tfrac{2\|x\|}{n} + 
		\tfrac{\|\eta\|}{ \|\xi\| \|h(U^{\alpha *}_{\zeta,\eta} \cdot )\|\dim(\alpha)^{1/2}}
		\sum_{m=1}^{\dim(\alpha)}
		\bigl| \tfrac{\la \xi|\zeta\ra }{\dim(\alpha)} - 
		\la 
		\Phi_\lambda(U^{\alpha}_{\xi,\xi^{\alpha}_{m}}),
		h(U^{\alpha *}_{\zeta,\xi^\alpha_m} \cdot )  \ra \bigr|\\
		&=
		\tfrac{2\|x\|}{n} + 
		\tfrac{\|\eta\|}{ \|\xi\| \|h(U^{\alpha *}_{\zeta,\eta} \cdot )\|\dim(\alpha)^{1/2}}
		\sum_{m=1}^{\dim(\alpha)}
		\bigl| \la 
		U^{\alpha}_{\xi,\xi^\alpha_m}-
		\Phi_\lambda(U^{\alpha}_{\xi,\xi^\alpha_m}),
		h(U^{\alpha *}_{\zeta,\xi^\alpha_m} \cdot )  \ra \bigr|.
	\end{split}\]
	Consequently by \eqref{eq9}
	\begin{equation}\label{eq12}
		\limsup_{(\lambda,n)\in \Lambda\times \NN}
		|\la x - \wt{\Phi}_{\lambda,n}(x),\omega\ra| \le 
		\tfrac{\|\eta\|}{ \|\xi\| \|h(U^{\alpha *}_{\zeta,\eta} \cdot )\|\dim(\alpha)^{1/2}}
		\sum_{m=1}^{\dim(\alpha)}
		\|U^{\alpha}_{\xi,\xi^\alpha_m }\|
		\| h(U^{\alpha *}_{\zeta,\xi^\alpha_m} \cdot)\|
		\eps.
	\end{equation}
	Now we use the assumption $\bf{N}_{\GG}<+\infty$ to obtain a lower bound on the norm of functional $h(U^{\alpha *}_{\zeta,\eta}\cdot)$. Since
	\[
	\|h(U^{\alpha *}_{\zeta,\eta}\cdot)\|\ge \tfrac{
		|h(U^{\alpha *}_{\zeta,\eta} U^{\alpha}_{\zeta,\eta})|}{
		\|U^{\alpha }_{\zeta,\eta}\|}=\tfrac{ \|\zeta\|^2 \|\eta\|^2 }{\dim(\alpha) \|U^{\alpha}_{\zeta,\eta}\|},\]
	in our situation we have 
	\[
	\tfrac{1}{\|h(U^{\alpha *}_{\zeta,\eta}\cdot)\|}\le
	\tfrac{\dim(\alpha) \|U^{\alpha}_{\zeta,\eta}\|}{\|\zeta\|^2 \|\eta\|^2}\le 
	\tfrac{\bf{N}_{\GG}\|\zeta\|\,\|\eta\|}{\|\zeta\|^2 \|\eta\|^2}=
	\tfrac{\bf{N}_{\GG}}{\|\zeta\|\, \|\eta\|}.
	\]
	Combining this with inequality \eqref{eq12} and standard inequalities $\|U^{\alpha}_{\xi,\xi^\alpha_m}\|\le \|\xi\|$, $ \|h(U^{\alpha *}_{\zeta,\xi^\alpha_m}\cdot)\|\le \|U^\alpha_{\zeta,\xi^\alpha_m } \|_2=\tfrac{\|\zeta\|}{\dim(\alpha)^{1/2}}$ we get
	\[
	\limsup_{(\lambda,n)\in \Lambda\times \NN}
	|\la  x - \wt{\Phi}_{\lambda,n}(x),\omega\ra| \le 
	\tfrac{\|\eta\|\bf{N}_{\GG}}{
		\|\xi\|\,\|\zeta\|\,\|\eta\|
		\dim(\alpha)^{1/2}}
	\sum_{m=1}^{\dim(\alpha)}\|\xi\|
	\tfrac{\|\zeta\|}{\dim(\alpha)^{1/2}}
	\eps=\bf{N}_{\GG}\eps
	\]
	which shows \eqref{eq8} and ends the proof.
\end{proof}

Theorem \ref{thm2} in particular applies to $\GG=\widehat{\Gamma}$, where $\Gamma$ is a discrete group; this is Corollary \ref{corC} of the introduction.


\smallbreak

In the last result of this section we show that  if we consider rather the matrix $\eps$-separation property, then we can drop the assumptions on the boundedness of the dimension of irreducible representations of $\GG$ and obtain the result valid for all $\eps \in (0,1)$. 

\begin{tw}\label{thm:matrix}
	Let $\GG$ be a compact quantum group of Kac type which is not coamenable. Then  $\LL^{\infty}(\GG)$ has the matrix $\eps$-separation property for all $\eps \in (0,1)$.
\end{tw}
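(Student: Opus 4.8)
The plan is to argue by contradiction, essentially reproducing the proof of Proposition~\ref{prop5} but running the whole argument with \emph{matrix-valued} test data; this way the loss by the factor $\bf{N}_{\GG}$ that forced the hypothesis $\bf{N}_{\GG}<+\infty$ in Theorem~\ref{thm2} does not appear. Assume then that $\LL^{\infty}(\GG)$ fails the matrix $\eps$-separation property, so that there is a net $(\Phi_\lambda)_{\lambda\in\Lambda}$ of normal, finite rank, UCP maps on $\LL^{\infty}(\GG)$ with
\[
\limsup_{\lambda\in\Lambda}\bigl|\langle x-(\Phi_\lambda\otimes\id)(x),\omega\rangle\bigr|\le\eps\,\|x\|\,\|\omega\|
\]
for \emph{every} Hilbert space $\Hil$ and all $x\in\LL^{\infty}(\GG)\bar{\otimes}\B(\Hil)$, $\omega\in(\LL^{\infty}(\GG)\bar{\otimes}\B(\Hil))_*$. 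Since $\GG$ is of Kac type, the Haagerup averaging trick recalled in the proof of Theorem~\ref{thm2} produces from $(\Phi_\lambda)$ the normal UCP quantum Herz--Schur multipliers $\wt{\Phi}_\lambda=\Delta^\sharp(\Phi_\lambda\otimes\id)\Delta=\Theta^l(a_\lambda)$ with $a_\lambda\in\A(\whG)$, which we write as $\wt{\Phi}_\lambda=(\omega_\lambda\otimes\id)\Delta$ for normal states $\omega_\lambda\in\LL^1(\GG)$.

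The crucial point — and the place where the matrix formulation gains something over Theorem~\ref{thm2} — is that the displayed estimate transfers from $(\Phi_\lambda)$ to $(\wt{\Phi}_\lambda)$ \emph{with no loss}. Indeed, from $\wt{\Phi}_\lambda\otimes\id_{\B(\Hil)}=(\Delta^\sharp\otimes\id)(\Phi_\lambda\otimes\id)(\Delta\otimes\id)$ and $\Delta^\sharp\Delta=\id$ we obtain, for any $x\in\LL^{\infty}(\GG)\bar{\otimes}\B(\Hil)$,
\[
x-(\wt{\Phi}_\lambda\otimes\id)(x)=(\Delta^\sharp\otimes\id)\bigl(y-(\Phi_\lambda\otimes\id)(y)\bigr),\qquad y:=(\Delta\otimes\id)(x),
\]
where now $\Phi_\lambda\otimes\id$ acts on $\LL^{\infty}(\GG)\bar{\otimes}\B(\LL^2(\GG)\otimes\Hil)$. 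As $\Delta$ is a normal injective $*$-homomorphism $\|y\|=\|x\|$, and as $\Delta^\sharp$ is UCP $\|\omega\circ(\Delta^\sharp\otimes\id)\|\le\|\omega\|$; pairing with $\omega$ and invoking the hypothesis for the Hilbert space $\LL^2(\GG)\otimes\Hil$ yields
\[
\limsup_{\lambda\in\Lambda}\bigl|\langle x-(\wt{\Phi}_\lambda\otimes\id)(x),\omega\rangle\bigr|\le\eps\,\|x\|\,\|\omega\|
\]
for every Hilbert space $\Hil$ and all $x,\omega$ as above. (If one only allowed $\Hil=\bc$ here, the passage through $\Delta^\sharp$ would cost a factor $\bf{N}_{\GG}$, which is exactly the constraint in Theorem~\ref{thm2}.)

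Now fix $\alpha\in\Irr(\GG)$ and a unitary representative $U^\alpha\in\LL^{\infty}(\GG)\bar{\otimes}\B(\msf{H}_\alpha)$. Using $(\Delta\otimes\id)(U^\alpha)=U^{\alpha}_{13}U^{\alpha}_{23}$ one computes $(\wt{\Phi}_\lambda\otimes\id)(U^\alpha)=(\I\otimes M_\lambda)U^\alpha$, where $M_\lambda:=(\omega_\lambda\otimes\id)(U^\alpha)\in\B(\msf{H}_\alpha)$ is a contraction, so that $U^\alpha-(\wt{\Phi}_\lambda\otimes\id)(U^\alpha)=(\I\otimes(\I-M_\lambda))U^\alpha$. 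For unit vectors $\zeta,\zeta'\in\msf{H}_\alpha$ let $\omega$ be the normal functional of norm $\le1$ on $\LL^{\infty}(\GG)\bar{\otimes}\B(\msf{H}_\alpha)$ given by $\omega(y):=\langle\Lambda_h(\I)\otimes\zeta,\,y\,(U^\alpha)^*(\Lambda_h(\I)\otimes\zeta')\rangle$, where $\Lambda_h(\I)\in\LL^2(\GG)$ is the GNS vector of the Haar state; since $\|\Lambda_h(\I)\|=1$, one checks $\langle U^\alpha-(\wt{\Phi}_\lambda\otimes\id)(U^\alpha),\omega\rangle=\langle\zeta|(\I-M_\lambda)\zeta'\rangle$. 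Applying the estimate of the previous paragraph with $x=U^\alpha$ and with such $\omega$, and passing to convex combinations, we get $\limsup_{\lambda\in\Lambda}|\varphi(\I-M_\lambda)|\le\eps$ for every $\varphi\in\B(\msf{H}_\alpha)^*$ with $\|\varphi\|\le1$; as $(\I-M_\lambda)_{\lambda}$ is a bounded net in the finite-dimensional space $\B(\msf{H}_\alpha)$, a routine compactness argument upgrades this to $\limsup_{\lambda\in\Lambda}\|\I-M_\lambda\|\le\eps$. Fixing $\eps''\in(\eps,1)$, there is thus $\lambda_\alpha\in\Lambda$ with $\|\I-M_\lambda\|<\eps''$ for all $\lambda\ge\lambda_\alpha$.

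It remains to finish as in the proof of Proposition~\ref{prop5}, which simplifies here because $\GG$ is of Kac type: the scaling group of $\GG$ is trivial, so, after possibly passing to a new net to make the $\wt{\Phi}_\lambda$ finite rank (as in the proof of Theorem~\ref{thm2}, preserving the estimates above), setting $\omega_\lambda^{(2)}:=\tfrac12(\omega_\lambda+\omega_\lambda\circ R_{\GG})$ and $f_\lambda:=(\id\otimes\omega_\lambda^{(2)})(\ww^{\whG *})$ yields normalised, self-adjoint, positive definite functions $f_\lambda\in\mrm{c}_{c}(\whG)$. Using $(R_{\GG}\otimes\id)(U^\alpha)=(U^\alpha)^*$ one identifies $f_\lambda p_\alpha$, viewed as an operator on $\msf{H}_\alpha$, with $\tfrac12(M_\lambda+M_\lambda^*)$; this is self-adjoint and $\|p_\alpha-f_\lambda p_\alpha\|=\|\I-\tfrac12(M_\lambda+M_\lambda^*)\|\le\|\I-M_\lambda\|<\eps''$ for $\lambda\ge\lambda_\alpha$, hence $f_\lambda p_\alpha\ge(1-\eps'')p_\alpha$ for $\lambda\ge\lambda_\alpha$. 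Reducing a general $a\in\mrm{c}_{c}(\whG)$ to central projections exactly as in Proposition~\ref{prop5} gives $a^*f_\lambda a\ge(1-\eps'')a^*a$ eventually for all $a\in\mrm{c}_{c}(\whG)$; since $1-\eps''>0$, Corollary~\ref{cor3}(2) applied to the discrete quantum group $\whG$ then forces its dual $\GG$ to be coamenable, contradicting the hypothesis. I expect the only genuine difficulty to be the loss-free transfer of the second paragraph together with the choice of matrix witnesses in the third; the rest is bookkeeping taken over from Proposition~\ref{prop5}, with the finite-rank-ness of $\wt{\Phi}_\lambda$ being the one routine point to keep track of.
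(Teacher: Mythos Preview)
Your argument is correct, and it takes a genuinely different route from the paper's own proof.

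The paper applies the hypothesis only to the single pair $x=U^\alpha$, $\omega=(h\otimes\operatorname{tr})(U^{\alpha *}\cdot)$, and then shows by a direct computation (using $\Delta^\sharp=\Delta^{-1}\mathbb{E}$ for the $h\otimes h$-preserving conditional expectation $\mathbb{E}$, together with traciality of $h$) that this \emph{particular} pairing is unchanged under the Haagerup averaging. This yields only the scalar estimate $\limsup_\lambda\bigl|1-\tfrac{1}{\dim\alpha}\sum_k(a_\lambda)^\alpha_{k,k}\bigr|\le\eps$, i.e.\ a bound on the normalised trace of $M_\lambda$. To turn this into $f_\lambda p_\alpha\ge(1-\eps'')p_\alpha$ the paper needs a \emph{second}, right-sided averaging $\hat{\Phi}_{\lambda,m}=\Delta^\sharp(\id\otimes\wt{\Phi}_{\lambda,m})\Delta$, which produces a \emph{central} multiplier $b_{\lambda,m}\in\mathcal{Z}\mrm{c}_c(\whG)$, after which symmetrisation and Corollary~\ref{cor3}(2) finish the argument.

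By contrast, your key observation is that the \emph{entire} matrix hypothesis transfers from $(\Phi_\lambda)$ to $(\wt{\Phi}_\lambda)$ with no loss, because the extra $\LL^\infty(\GG)$ tensor factor coming from $\Delta$ can be absorbed into the auxiliary $\B(\Hil)$ (and the normal functional $\omega\circ(\Delta^\sharp\otimes\id)$ extends to the larger algebra without increasing its norm). This gives the stronger operator-norm estimate $\limsup_\lambda\|I-M_\lambda\|\le\eps$ rather than just a trace bound, and so the second averaging step becomes unnecessary: after symmetrising, $f_\lambda p_\alpha=\tfrac12(M_\lambda+M_\lambda^*)$ already satisfies the required inequality. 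Your approach thus makes the role of the matrix amplification more transparent --- it is precisely what allows the Haagerup trick to be applied losslessly --- while the paper's approach is more self-contained and computational, avoiding the extension-of-normal-functionals step. Both reach Corollary~\ref{cor3}(2) in the end.
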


\begin{proof}
	The argument will follow the lines of the proof of Theorem \ref{thm2}.
	Assume by contradiction that there is $0<\eps<1$ and a net $(\Phi_\lambda)_{\lambda\in \Lambda}$ of normal, finite rank, UCP maps on $\LL^{\infty}(\GG)$ such that
	\[
	\limsup_{\lambda\in \Lambda} |\la x-(\Phi_\lambda\otimes \id)(x),\omega \ra |\le \eps
	\]
	for all $n\in\NN$ and $x\in \LL^{\infty}(\GG)\otimes \M_n,\omega\in (\LL^{\infty}(\GG)\otimes \M_n)_*$ with $\|x\|=\|\omega\|=1$. Thus in particular for any $\alpha\in \Irr(\GG)$ and $x=U^{\alpha}, \omega=(h\otimes \operatorname{tr})(U^{\alpha *}\cdot)$ we have
	\begin{equation}\label{eq14}
		\limsup_{\lambda\in \Lambda} \,
		\bigl|(h\otimes \operatorname{tr})\bigl( U^{\alpha *} (U^{\alpha}-(\Phi_{\lambda}\otimes \id)(U^{\alpha}))\bigr)\bigr|\le \eps
	\end{equation}
	(where $\operatorname{tr}$ is the normalised trace on $\M_n$). Recall the map $\Delta^{\sharp}$ of \eqref{eq13} and define again for each $\lambda \in \Lambda$
	\[
	\wt{\Phi}_{\lambda}=\Delta^{\sharp}
	(\Phi_{\lambda} \otimes \id)\Delta.
	\]
Using the fact that $\Delta^{\sharp}=\Delta^{-1}\mathbb{E}$, where $\mathbb{E}\colon \LL^{\infty}(\QG)\bar{\otimes}\LL^{\infty}(\QG)\rightarrow \Delta(\LL^{\infty}(\QG))$ is the normal $h\otimes h$ -preserving conditional expectation, we compute as follows:
\[\begin{split}
(h\otimes \oon{tr})( U^{\alpha *} (\wt{\Phi}_\lambda\otimes \id)(U^{\alpha}))&=
(h\otimes h\otimes \oon{tr})(\Delta\otimes \id)\bigl( U^{\alpha *} (\wt{\Phi}_\lambda\otimes \id)(U^{\alpha})\bigr)\\
&=
(h\otimes h\otimes \oon{tr})\bigl(
(\Delta\otimes \id)(U^{\alpha *})
(\mathbb{E}\otimes \id)\bigl( (\Phi_\lambda\otimes \id)(U^{\alpha})_{13} U_{23}^{\alpha}\bigr)\bigr)\\
&=
(h\otimes h\otimes \oon{tr})\bigl(
(\mathbb{E}\otimes \id)\bigl(
(\Delta\otimes \id)(U^{\alpha *})
 (\Phi_\lambda\otimes \id)(U^{\alpha})_{13} U_{23}^{\alpha}\bigr)\bigr)\\
 &=
(h\otimes h\otimes \oon{tr})\bigl(
U^{\alpha * }_{23}U^{\alpha *}_{13}
 (\Phi_\lambda\otimes \id)(U^{\alpha})_{13} U_{23}^{\alpha}\bigr)
 \\&=
 (h\otimes \oon{tr})(U^{\alpha *} (\Phi_\lambda\otimes \id)(U^{\alpha})).
\end{split}\]
In the above calculation we have used the Tomiyama's theorem and the assumption that $h$ is tracial. Consequently, from \eqref{eq14} we obtain
\begin{equation}\label{eq23}
		\limsup_{\lambda\in \Lambda} \,
		\bigl|(h\otimes \operatorname{tr})\bigl( U^{\alpha *} (U^{\alpha}-(\wt{\Phi}_{\lambda}\otimes \id)(U^{\alpha}))\bigr)\bigr|\le \eps.
\end{equation}
Next, let us express explicitly the second factor appearing above:
\[\begin{split}
		(h\otimes \operatorname{tr})\bigl( U^{\alpha *}  (\wt{\Phi}_{\lambda}\otimes \id)(U^{\alpha})\bigr)&=
		\sum_{i,j,k,l=1}^{\dim(\alpha)}
		(h\otimes \operatorname{tr})\bigl(
		U^{\alpha *}_{i,j} \wt{\Phi}_{\lambda}(U^{\alpha}_{k,l})
		\otimes e^{\alpha}_{j,i} e^{\alpha}_{k,l}
		\bigr)\\
		&=
		\tfrac{1}{\dim(\alpha)}\sum_{i,j=1}^{\dim(\alpha)}
		h\bigl(U^{\alpha *}_{i,j}  \wt{\Phi}_{\lambda}(U^{\alpha}_{i,j})\bigr).
\end{split}\]
We thus obtain from \eqref{eq23} that 
	\[
	\limsup_{\lambda\in \Lambda}\,
	\bigl|1-
	\tfrac{1}{\dim(\alpha)}\sum_{i,j=1}^{\dim(\alpha)}
	h(U^{\alpha *}_{i,j}   \wt{\Phi}_{\lambda}(U^{\alpha}_{i,j}))\bigr|\le \eps.
	\]
	Each $\wt{\Phi}_{\lambda}$ is a normal UCP quantum Herz-Schur multiplier associated with a function $a_\lambda\in \A(\whG)$, again by \cite[Section 6.3.2]{Brannan}. As in the proof of Theorem \ref{thm2}, we can approximate each $\wt{\Phi}_{\lambda}$ (in a completely bounded norm) by a sequence $(\wt{\Phi}_{\lambda, m})_{m=1}^\infty$ of normal UCP quantum Herz-Schur multipliers associated with positive-definite functions $a_{\lambda,m} \in \mrm{c}_{c}(\whG)$. Increasing $\eps$ to $\eps' \in (0,1)$ we obtain then again for each $\alpha\in \Irr(\GG)$
	\begin{equation}\label{eq15}
		\limsup_{(\lambda, m)\in \Lambda\times \NN}\,
		\bigl|1-
		\tfrac{1}{\dim(\alpha)}\sum_{i,j=1}^{\dim(\alpha)}
		h(U^{\alpha *}_{i,j}   \wt{\Phi}_{\lambda,m}(U^{\alpha}_{i,j}))\bigr|\le \eps'.
	\end{equation}
	Note that we have for each $\alpha\in \Irr(\GG)$, $1 \le i,j \le \dim (\alpha), \lambda \in \Lambda, m \in \NN$ 
	\begin{equation} \label{eq16} \wt{\Phi}_{\lambda,m}(U^{\alpha}_{i,j})   = \sum_{k=1}^{\dim(\alpha)} (a_{\lambda,m})^{\alpha}_{i,k} U^{\alpha}_{k,j},\end{equation}
	so that \eqref{eq15} simplifies to
	\begin{equation}\label{eq17}
		\limsup_{(\lambda, m)\in \Lambda\times \NN}\,
		\bigl|1-
		\tfrac{1}{\dim(\alpha)}\sum_{k=1}^{\dim(\alpha)}
		(a_{\lambda,m})^{\alpha}_{k,k}\bigr|\le \eps'.
	\end{equation}
	
	Finally we average once again (from the other side), setting for each $\lambda \in \Lambda, m \in \NN$ 
	\[ \hat{\Phi}_{\lambda,m} = \Delta^{\sharp}
	(\id \otimes \wt{\Phi}_{\lambda,m})\Delta,\]
c.f.~\cite[Proposition 6.8]{DKV}. Clearly $\hat{\Phi}_{\lambda,m}$ is a normal UCP map.  A direct calculation using \eqref{eq16} shows that we have for each $\alpha\in \Irr(\GG)$, $1 \le i,j \le  \dim (\alpha)$ 
	\begin{align*} \hat{\Phi}_{\lambda,m} (U^{\alpha}_{i,j})&= \Delta^\sharp \bigl(\sum_{k=1}^{\dim(\alpha)}  U^{\alpha}_{i,k} \otimes \wt{\Phi}_{\lambda,m}(U^{\alpha}_{k,j})    \bigr) 
		= \Delta^\sharp \bigl(\sum_{k=1}^{\dim(\alpha)}  U^{\alpha}_{i,k} \otimes \sum_{l=1}^{\dim(\alpha)} (a_{\lambda,m})^{\alpha}_{k,l}  U^{\alpha}_{l,j}\bigr)
		\\&= \tfrac{1}{\dim(\alpha)}\sum_{k=1}^{\dim(\alpha)} (a_{\lambda,m})^{\alpha}_{k,k}  U^{\alpha}_{i,j}.
	\end{align*} 
	Thus each $\hat{\Phi}_{\lambda,m}$ is a central quantum Herz-Schur multiplier, associated to a positive-definite function $b_{\lambda, m} \in \mc{Z} \mrm{c}_{c}(\whG)$ given by 
	\[ b_{\lambda, m} =  \sum_{\alpha\in\Irr(\whG)}\bigl(\tfrac{1}{\dim(\alpha)}\sum_{k=1}^{\dim(\alpha)}\bigr (a_{\lambda,m})^{\alpha}_{k,k}\bigr) p_\alpha.\]
	Finally define $c_{\lambda,m} = \frac{1}{2} (b_{\lambda, m} + b_{\lambda, m}^*) \in \mc{Z}\mrm{c}_{c}(\whG)$. This is again a central, real valued positive-definite function on $\whG$, associated to the quantum Herz-Schur multiplier $\tfrac{1}{2}(\hat{\Phi}_{\lambda,m} + \hat{\Phi}_{\lambda,m}\circ R_{\GG})$. Inequality \eqref{eq17} implies that 
	\[ \limsup_{(\lambda, m)\in \Lambda\times \NN}\,
	(1-c_{\lambda,m}^\alpha)\le \eps',\]
	which in turn shows the coamenability of $\GG$ via Corollary \ref{cor3} (2).
\end{proof}

Note  that Theorem \ref{thm:matrix} can be reformulated in the spirit of Corollary \ref{corC}, leading to Theorem \ref{thmB} from the introduction.



\begin{proof}[Proof of Theorem \ref{thmB}]
	This is an immediate consequence of Remark \ref{rem:sep}, Theorem \ref{thm:matrix} and the fact that $\GG$ is coamenable if and only if  $\LL^\infty(\GG)$ is injective, which goes back to \cite{Ruan} (see also \cite{Brannan}).
\end{proof}

The analogue of Theorem \ref{thm:matrix} holds also for the Haagerup property. We formalise it in the next theorem.

\begin{tw}\label{thm:Haagerup}
	Let $\GG$ be a compact quantum group of Kac type  and let $\M = \LL^\infty(\GG)$. Fix $\eps \in (0,1)$. Suppose that there exists a net	$(\Phi_\lambda)_{\lambda\in \Lambda}$  of normal, $h$-preserving UCP maps on $\M$ which have compact $\LL^2$-implementations (in the sense of \cite{Jol}), such that for  every $n \in \bn$, $x\in \M\otimes \M_n,\omega\in (\M\otimes \M_n )_*$ with $\|x\|=\|\omega\|=1$ we have $\limsup_{\lambda\in \Lambda} |\la x-(\Phi_\lambda\ot \id)(x),\omega\ra|\leq \eps$. Then $\whG$ has the Haagerup property.
\end{tw}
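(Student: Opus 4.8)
The plan is to mimic the proof of Theorem \ref{thm:matrix} almost line by line; the one genuinely new point is that, where that argument approximates the averaged multiplier in completely bounded norm by \emph{finite rank} (hence compactly supported) multipliers, here — since what must survive in the limit is decay at infinity rather than finite rank — one checks directly that the averaged multiplier corresponds to a positive-definite function \emph{vanishing at infinity}, and this is precisely where the hypothesis that the $\Phi_\lambda$ have compact $\LL^2$-implementations is used.

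Concretely, for each $\lambda\in\Lambda$ I would set $\wt\Phi_\lambda=\Delta^\sharp(\Phi_\lambda\otimes\id)\Delta$ with $\Delta^\sharp$ the map of \eqref{eq13}. Since $\GG$ is of Kac type and $\Phi_\lambda$ is $h$-preserving, $\wt\Phi_\lambda$ is a normal, $h$-preserving UCP quantum Herz--Schur multiplier, $\wt\Phi_\lambda=\Theta^l(a_\lambda)$ for a normalised positive-definite function $a_\lambda$ (the Haagerup trick, cf.\ \cite[Section 6.3.2]{Brannan}); explicitly $\wt\Phi_\lambda(U^\alpha_{i,j})=\sum_k (a_\lambda)^\alpha_{i,k}U^\alpha_{k,j}$ with $(a_\lambda)^\alpha_{i,k}=\sum_m h\bigl(U^{\alpha *}_{k,m}\Phi_\lambda(U^\alpha_{i,m})\bigr)$. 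Writing $P_\alpha$ for the projection of $\LL^2(\GG)$ onto the Peter--Weyl block of $\alpha\in\Irr(\GG)$ and $T_{\Phi_\lambda}$ for the $\LL^2$-implementation of $\Phi_\lambda$ (compact by hypothesis), the last formula says exactly that $a_\lambda^\alpha$ is, up to the factor $\tfrac{1}{\dim(\alpha)}$, the partial trace of $P_\alpha T_{\Phi_\lambda}P_\alpha$ over the conjugate leg of the block; as that leg has dimension $\dim(\alpha)$, the factor cancels in the norm estimate and $\|a_\lambda^\alpha\|\le\|P_\alpha T_{\Phi_\lambda}P_\alpha\|\le\|T_{\Phi_\lambda}P_\alpha\|$. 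Since $T_{\Phi_\lambda}$ is compact and the $P_\alpha$ are pairwise orthogonal finite-rank projections, $\|T_{\Phi_\lambda}P_\alpha\|\to0$ along $\Irr(\GG)$, so $\|a_\lambda^\alpha\|\to0$; that is, $a_\lambda\in\mrm{c}_0(\whG)$, hence — being positive-definite — $a_\lambda\in B_0(\whG)$ by \cite[Lemma 4.3]{QHap}.

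From here the proof of Theorem \ref{thm:matrix} goes through unchanged. Testing the hypothesis on $x=U^\alpha$ and $\omega=(h\otimes\operatorname{tr})(U^{\alpha *}\cdot)$, and using that $h$ is tracial together with the identity $(h\otimes\operatorname{tr})\bigl(U^{\alpha *}(\wt\Phi_\lambda\otimes\id)(U^\alpha)\bigr)=(h\otimes\operatorname{tr})\bigl(U^{\alpha *}(\Phi_\lambda\otimes\id)(U^\alpha)\bigr)$ (which follows from $\Delta^\sharp=\Delta^{-1}\mathbb{E}$ and Tomiyama's theorem), one obtains for each $\alpha\in\Irr(\GG)$
\[
\limsup_{\lambda\in\Lambda}\Bigl|1-\tfrac{1}{\dim(\alpha)}\sum_{k=1}^{\dim(\alpha)}(a_\lambda)^\alpha_{k,k}\Bigr|\le\eps .
\]
Averaging once more from the other side, $\hat\Phi_\lambda=\Delta^\sharp(\id\otimes\wt\Phi_\lambda)\Delta$ is a normal UCP \emph{central} quantum Herz--Schur multiplier with symbol $b_\lambda=\sum_{\alpha\in\Irr(\whG)}\bigl(\tfrac{1}{\dim(\alpha)}\sum_k(a_\lambda)^\alpha_{k,k}\bigr)p_\alpha$, and $c_\lambda=\tfrac12(b_\lambda+b_\lambda^*)\in\mc{Z}\,\mrm{c}_0(\whG)$ is a central, real-valued, normalised positive-definite function (exactly as in the proof of Theorem \ref{thm:matrix}). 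Since $\bigl|\tfrac{1}{\dim(\alpha)}\sum_k(a_\lambda)^\alpha_{k,k}\bigr|\le\|a_\lambda^\alpha\|\to0$, both $b_\lambda$ and $c_\lambda$ vanish at infinity, so $c_\lambda\in B_0(\whG)$ (again by \cite[Lemma 4.3]{QHap}), and the displayed bound reads $\limsup_\lambda(1-c_\lambda^\alpha)\le\eps$ for every $\alpha\in\Irr(\GG)$. Fixing $\eps'\in(\eps,1)$ we then have $p_\alpha c_\lambda\ge(1-\eps')p_\alpha$ eventually for each $\alpha$; as $c_\lambda$ is central, the inequality $a^*c_\lambda a\ge(1-\eps')a^*a$ for all $a\in\mrm{c}_c(\whG)$ reduces — exactly as in the proof of Proposition \ref{prop5} — to these block inequalities and hence holds eventually. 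Thus $(c_\lambda)_{\lambda\in\Lambda}$ is a net of normalised positive-definite functions in $B_0(\whG)$ meeting the hypotheses of Corollary \ref{cor2} with test space $A=\mrm{c}_c(\whG)$ (compare also the remark following Corollary \ref{cor3}), which forces $\whG$ to have the Haagerup property.

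The main obstacle is the step showing $a_\lambda\in\mrm{c}_0(\whG)$, i.e.\ that $\wt\Phi_\lambda$ inherits a compact $\LL^2$-implementation from $\Phi_\lambda$. This is not a formal consequence of compactness of $T_{\Phi_\lambda}$: the $\LL^2$-implementation of $\Phi_\lambda\otimes\id$ is $T_{\Phi_\lambda}\otimes\id$, which is never compact, and one must exploit that $\Delta$ compresses it to the ``thin'' subspace $\Delta(\LL^2(\GG))$ — the Peter--Weyl block computation of $a_\lambda^\alpha$ above being exactly the quantitative expression of this. Everything else is the transcription of the proof of Theorem \ref{thm:matrix}, replacing ``finite rank'' by ``compact $\LL^2$-implementation'' and $\mrm{c}_c$ by $\mrm{c}_0$ throughout.
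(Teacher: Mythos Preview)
Your proposal is correct and follows the same strategy the paper sketches: average via $\Delta^\sharp(\Phi_\lambda\otimes\id)\Delta$ to get Herz--Schur multipliers, average again and symmetrize to obtain central real-valued positive-definite functions in $\mrm{c}_0(\whG)$ that are eventually $(1-\eps')$-bounded below on each block, and then invoke Corollary~\ref{cor2}. The only substantive difference is the justification of $a_\lambda\in\mrm{c}_0(\whG)$: the paper simply cites \cite[Theorem 7.7]{QHap}, whereas you supply a direct argument via the partial-trace identity $\|a_\lambda^\alpha\|\le\|P_\alpha T_{\Phi_\lambda}P_\alpha\|$ together with the elementary fact that for a compact operator $T$ and pairwise orthogonal projections $P_\alpha$ one has $\|TP_\alpha\|\to 0$. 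This is essentially the content of the cited step in \cite{QHap}, so the two proofs coincide in spirit; your version has the modest advantage of being self-contained and making transparent exactly where and how the compactness of the $\LL^2$-implementation enters. You also correctly observe (and the paper alludes to this with ``even simpler'') that the cb-approximation step $\wt\Phi_\lambda\rightsquigarrow\wt\Phi_{\lambda,m}$ from the proof of Theorem~\ref{thm:matrix} is unnecessary here, since $a_\lambda$ already lies in $\mrm{c}_0(\whG)$ rather than needing to be pushed into $\mrm{c}_c(\whG)$.
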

\begin{proof}
	As the proof is very similar to that above (and in fact even simpler), we only outline the main steps.	We again begin by passing from the net $(\Phi_\lambda)_{\lambda\in \Lambda}$ to $(\wt{\Phi}_\lambda)_{\lambda\in \Lambda}$ by averaging; the proof of \cite[Theorem 7.7]{QHap} shows that the positive-definite functions associated to the resulting multipliers belong to $\mrm{c}_{0}(\whG)$. Averaging again and then symmetrizing gives a net of normalized real-valued positive-definite functions which in the limit are `$\eps$-close to $\I$'. This contradicts Corollary \ref{cor2}.
	
\end{proof}

We finish the paper by stating two natural open questions. 

\begin{quest}
Let $\M$ be a von Neumann algebra and  let $\eps \in (0,1)$. 
\begin{itemize}
	\item Is the $\eps$-separation property of $\M$ equivalent to the matrix $\eps$-separation property of $\M$?
	\item Is the matrix $\eps$-separation property of $\M$ equivalent to non-injectivity of $\M$?
\end{itemize}
\end{quest}
Our results show that the answer to the first question above is positive for  von Neumann algebras of discrete groups and answer to the second question is positive for  von Neumann algebras of unimodular discrete quantum  groups. Note that in view of Proposition \ref{prop:someepsilon} the positive answers to questions above are equivalent to the negative answer to the following question.

\begin{quest}
	Let $\M$ be a von Neumann algebra. 
	\begin{itemize}
		\item Does the $\eps$-separation property of $\M$ depend on $\eps\in (0,1)$?
	\end{itemize}
\end{quest}

One could of course formulate variants of the above questions relevant for the von Neumann algebraic Haagerup property.

Finally one can ask whether similar separation concepts can be introduced in the context of \emph{weak amenability} or \emph{completely bounded approximation property} (\cite{bo}, \cite{Brannan}). Here however the questions become significantly more difficult, even for classical groups. The reason is that we no longer can exploit positivity of the relevant approximants and there is no obvious replacement for tools such as the Godement mean used in this paper.    

\smallskip

\noindent {\bf Acknowledgments. } 
J.K.\ was partially supported by EPSRC grants EP/T03064X/1 and EP/T030992/1. A.S.\ was  partially supported by the National Science Center (NCN) grant no. 2020/39/I/ST1/01566. A.S.\ would also like to express the gratitude to Matthew Daws and Christian Voigt for making possible his visit to Glasgow in May 2023, where some of the work on this paper was done. Finally we thank the referee for useful comments which improved the presentation.


\begin{thebibliography}{99999999}
	
	
\bibitem[BMT]{BMT}
E.\,B\'edos, G.\,Murphy and L.\,Tuset,
Co-amenability for compact quantum groups,
\emph{J.\,Geom.\,Phys.} \textbf{40} (2001) no.\,2, 130--153.

\bibitem[BT]{BT}
E.\,Bedos and L.\,Tuset, Amenability and co-amenability for locally compact quantum groups, \emph{Internat.\,J.\,Math}.
\textbf{14} (2003), no.\,8, 865--884.

\bibitem[Boz]{Bozejko}  M.\,Bo\.zejko, Some aspects of harmonic analysis on free groups, \emph{Colloq.\,Math.} \textbf{41} (1979), no.\,2, 265--271.

\bibitem[Bra]{Brannan}  M.\,Brannan, Approximation properties for locally compact quantum groups, \emph{Banach Center Publ.} \textbf{111} (2017),  185--232.



\bibitem[BrR]{BrannanRuan}  M.\,Brannan and Z.-J.\,Ruan, $L^p$-representations of discrete quantum groups, \emph{J. Reine Angew. Math.} \textbf{732} (2017), 165--210.

\bibitem[BG]{bg} N.\,Brown and E.\,Guentner, New $C^*$-completions of discrete groups and related spaces, \emph{Bull. Lond. Math. Soc.} \textbf{45} (2013), no.\,6, 1181--1193.

\bibitem[BO]{bo} N.\,Brown and N.\,Ozawa, ``$\C^*$-Algebras and finite dimensional approximations'', Graduate Studies in Mathematics,
\textbf{88}. American Mathematical Society, Providence, RI, 2008.


\bibitem[Cas]{Caspers} M.\,Caspers,
Weak amenability of locally compact quantum groups and approximation properties of extended quantum $SU(1,1)$, \emph{Commun. Math. Phys.} \textbf{331}  (2014), no.\,3, 1041--1069.


\bibitem[CCJJV]{book}
P.A.\,Cherix, M.\,Cowling, P.\,Jolissaint, P.\,Julg and A.\,Valette, ``Groups with the Haagerup property.
	Gromov's a-$T$-menability,''  Progress in Mathematics, \textbf{197},  Basel, 2001.


\bibitem[Cho]{choda} M. Choda,
Group factors of the {H}aagerup type,
\emph{Proc. Japan Acad. Ser. A Math. Sci.}
\textbf{59} (1983) 174--177.

\bibitem[Con]{Connes} A.\,Connes, Classification of injective factors. Cases II$_1$, II$_\infty$, III$_\lambda$, $\lambda \neq 1$, \emph{Ann.\, of Math.\,(2)} \textbf{104} (1976), no.\,1, 73--115.

\bibitem[DD]{DasDaws}  B.\,Das and M.\,Daws, Quantum Eberlein compactifications and invariant means, \emph{Indiana Univ. Math. J.} \textbf{65} (2016), no.\,1, 307--352.

\bibitem[Daw$_1$]{DawsMultipliers0} M.\,Daws, Multipliers of locally compact quantum groups via Hilbert \cst-modules, \emph{J.\,Lond.\,Math.\,Soc., II.\,Ser.} \textbf{84} (2011), no.\,2, 385--407.

\bibitem[Daw$_2$]{DawsMultipliers} M.\,Daws, Completely positive multipliers of quantum groups, \emph{Internat.\,J.\,Math.} \textbf{23} (2012), no.\,12, 1250132, 23 pp.


\bibitem[DFSW]{QHap}
M.\,Daws, P.\,Fima, A.\,Skalski and S.\,White,
The Haagerup property for locally compact quantum groups,
\emph{J.\,Reine Angew.\,Math.} \textbf{711}(2016) 189--229.

\bibitem[DKSS]{DKSS}   M.\, Daws, P.\,Kasprzak, A.\,Skalski and P.\,So\l tan,
Closed quantum subgroups of locally compact quantum groups, \emph{Adv.\,Math.} \textbf{231} (2012), 3473--3501.

\bibitem[DKV]{DKV} M.\, Daws, J.\, Krajczok and C.\, Voigt, The approximation property for locally compact quantum groups, \emph{Adv.~Math.} \textbf{438} (2024), 79 pp.

\bibitem[DS]{MattPekka}  M.\, Daws and P.\,Salmi, Completely positive definite functions and Bochner's theorem for locally compact quantum groups, \emph{J.\,Funct.\,Anal.} \textbf{264} (2013), no. 7, 1525--1546

\bibitem[DSV]{DSV}   M.\, Daws, A.\,Skalski and A.\,Viselter, Around property (T) for quantum groups. \emph{Comm.\,Math.\,Phys.} \textbf{353} (2017), no.\,1, 69--118.

\bibitem[Der]{Derighetti}   A.\,Derighetti, Sur certaines propr\'et\'es des repr\'esentations unitaires des groupes localement compacts. \emph{Comment.\,Math.\,Helv.} \textbf{48} (1973), 328--339.

\bibitem[Dix]{DixmierC} J.\,Dixmier, ``C*-algebras'',  North-Holland Mathematical Library, Vol. \textbf{15}. North-Holland Publishing Co., Amsterdam-New York-Oxford, 1977.

\bibitem[ER]{EffrosRuan}
E.G.\,Effros and Z-J.\,Ruan,
``Operator Spaces,''
Oxford University Press, 2000.

\bibitem[Eym]{Eymard} P.\,Eymard, L'alg\'ebre de Fourier d'un groupe localement compact, \emph{Bull.\,Soc.\,Math.\,France} \textbf{92} (1964), 181--236. 

\bibitem[Fell]{Fell} J.M.G.\,Fell, Weak containment and induced representations of groups, \emph{Canadian J.\,Math.} \textbf{14} (1962), 237--268.

\bibitem[God]{Godement}   R.\,Godement,  Les fonctions de type positif et la th\'eorie des groupes, \emph{Trans.\,Amer.\,Math.\,Soc.} \textbf{63} (1948), 1--84.

\bibitem[HNR]{HuNeufangRuan} Z.\,Hu, M.\,Neufang and Z.-J.\,Ruan, Completely bounded multipliers over locally compact quantum groups, \emph{Proc. Lond. Math. Soc. (3)} \textbf{103} (2001), no. 1, 1--39.

\bibitem[Jol]{Jol}
P. Jolissaint,
Haagerup approximation property for finite von Neumann algebras,
\emph{J. Operator Theory} {\bf 48} (2002), 549--571.

\bibitem[KL]{KaniuthLau}  E.\,Kaniuth and A.T.-M. Lau,  ``Fourier and Fourier-Stieltjes algebras on locally compact groups,'' Mathematical Surveys and Monographs, \textbf{231}. American Mathematical Society, Providence, RI, 2018.



\bibitem[KS]{KS} J.\,Krajczok and P.M.\,So\l tan, Compact quantum groups with representations of bounded degree, \emph{J. Operator Theory} \textbf{80} (2018), no.\,2, 415--428.

\bibitem[Kus]{Johan}
J.\,Kustermans, Locally compact quantum groups in the universal setting. \emph{Int.~J.~Math.} \textbf{12} (2001) 289--338.

\bibitem[KV]{KustermansVaes} J.\,Kustermans and S.\,Vaes, Locally compact quantum groups,
\emph{Ann.\,Sci.\,{\'E}cole Norm.\,Sup.\,(4)} \textbf{33}
(2000), no.\,9,  837--934.

\bibitem[KVD]{KVD}
J.~Kustermans and A. Van Daele, C*-algebraic quantum groups arising from algebraic quantum groups, \emph{Int.~J.~Math.} \textbf{8} (1997), no.\,8, 1067--1139.

	
	\bibitem[NT]{NeshveyevTuset} S.\,Neshveyev and L.\,Tuset, ``Compact Quantum Groups and Their Representation Categories,''  Cours Sp\'ecialis\'es  \textbf{20}, Soci\'et\'e Math\'ematique de France, Paris, 2013.
	
	

\bibitem[Rua]{Ruan} Z.-J.\,Ruan, Amenability of Hopf von Neumann algebras and Kac algebras, \emph{J.\,Funct.\,Anal.} \textbf{139} (1996), no.\,2, 466--499.

\bibitem[RV]{AmiVolker}  V.\,Runde and A.\,Viselter, On positive definiteness over locally compact quantum groups,  \emph{Canad.\,J.\,Math.} \textbf{68} (2016), no.\,5, 1067--1095.


\bibitem[Tak]{Takesaki} M.\,Takesaki, ``Theory of operator algebras. I.'' Reprint of the first (1979) edition. Encyclopaedia of Mathematical Sciences, \textbf{124}. Operator Algebras and Non-commutative Geometry, \textbf{5}. Springer-Verlag, Berlin, 2002. 

\bibitem[Tom]{Tomatsu}    
R.\,Tomatsu, Amenable discrete quantum groups, \emph{J.\,Math.\,Soc.\,Japan} \textbf{58} (2006), no. 4, 949–-964.


\bibitem[VD]{VanDaele} A.\,Van Daele, Locally compact quantum groups. A von Neumann algebra approach, \emph{SIGMA Symmetry Integrability Geom.\, Methods Appl.} \textbf{10} (2014), Paper 082, 41 pp.
	
\end{thebibliography}
\end{document}